\definecolor{pinegreen}{rgb}{0.0, 0.47, 0.44}
\pgfplotsset{compat=newest}
\tikzset{
-Latex,auto,node distance =1 cm and 1 cm,semithick,
state/.style ={ellipse, draw, minimum width = 0.7 cm},
point/.style = {circle, draw, inner sep=0.04cm,fill,node contents={}},
bidirected/.style={Latex-Latex,dashed},
el/.style = {inner sep=2pt, align=left, sloped}
}
\def\E{{\mathbb E}}
\def\Pr{{\mathbb{P}}}
\def\Re{\mathbb{R}}
\def\Qe{\mathbb{Q}}
\def\hat{\widehat}
\def \P{\mathcal{P}}
\def \F{\mathcal{F}}
\def \Ze{{\mathbb{Z}}}
\def\M{{\mathcal M}}
\def\F{{\mathcal F}}
\def\P{{\mathcal P}}
\def\X{{\mathcal X}}
\def\Y{{\mathcal Y}}
\def\U{{\mathcal U}}
\def\Re{{\mathbb R}}
\newcommand{\e}{\mathbf{e}}
\newtheorem{assumption}{Assumption}
\def\epi{{\mathrm{epi}}}
\newcommand{\trxi}{\tilde{\bm{\xi}}}
\newcommand{\trzeta}{\tilde{\bm{\zeta}}}
\newcommand{\rzeta}{{\bm{\zeta}}}
\DeclarePairedDelimiter\ceil{\lceil}{\rceil}
\DeclarePairedDelimiter\floor{\lfloor}{\rfloor}
\DeclareMathOperator{\sign}{sign}
\DeclareMathOperator{\conv}{conv}
\renewcommand{\S}{\mathcal{\bm S}}
\newcommand{\rxi}{\bm{\xi}}
\def\x{\vect{x}}
\renewcommand{\S}{\mathcal{S}}
\newcommand{\exclude}[1]{}
\newcommand*{\QEDA}{\hfill\ensuremath{\square}}
\definecolor{mygreen}{RGB}{28,172,0} 
\definecolor{mypurple}{RGB}{170,55,241}
\definecolor{myorange}{rgb}{0.77,0.38,0.06}
\title{On Tractability, Complexity, and Mixed-Integer Convex Programming Representability of Distributionally Favorable Optimization}
\date{\today}
\titlerunning{MICP-R of Distributionally Favorable Optimization}
\author{Nan Jiang \and Weijun Xie}
\institute{First Author: Nan Jiang \at
Affiliation: Georgia Institute of Technology, Atlanta, GA\\
\email{nanjiang@gatech.edu}
\and Corresponding Author: Weijun Xie \at
Affiliation: Georgia Institute of Technology, Atlanta, GA\\
\email{wxie@gatech.edu}
}
\edef\sign{\pgfmathresult}%
\edef\x{\pgfmathresult}%
\edef\t{\pgfmathresult}%
\edef\y{\pgfmathresult}%
\begin{document}
\maketitle

\begin{abstract}
Distributionally Favorable Optimization (DFO) is an important framework for decision-making under uncertainty, with applications across fields such as reinforcement learning, online learning, robust statistics, chance-constrained programming, and two-stage stochastic optimization without relatively complete recourse. In contrast to the traditional Distributionally Robust Optimization (DRO) paradigm, DFO presents a unique challenge-- the application of the inner infimum operator often fails to retain the convexity. In light of this challenge, we study the tractability and complexity of DFO. We establish sufficient and necessary conditions for determining when DFO problems are tractable or intractable. Despite the typical nonconvex nature of DFO problems, our findings show that they are mixed-integer convex programming representable (MICP-R), thereby enabling solutions via standard optimization solvers. Finally, we numerically validate the efficacy of our MICP-R formulations. 
\end{abstract}

\vspace{0.2in}
\noindent\textbf{Keywords.} Distributionally Favorable Optimization; Tractability;  Complexity; Mixed-Integer Convex Programming Representability

\section{Introduction}
We study Distributionally Favorable Optimization (DFO) 
that admits the following form:
\begin{equation}
v^*=\min_{\bm x \in \X}\inf_{\Pr\in\P}\E_{\Pr}\left[Q(\bm x, \trxi)\right], \label{dfo}
\end{equation}
where 
set $\X\subseteq \Re^n$ is deterministic, set $\P$ denotes an ambiguity set with $\P \subseteq \{ \Pr\colon\Pr\{\trxi\in {\U}\}=1 \}$ and support $\U\subseteq\Re^m$ (also called ``uncertainty set" throughout this paper), and function $Q(\bm x, {\trxi}):\Re^n\times \U\to \Re$. 
Note that if only support information $\U$ is available (i.e., $\P = \{ \Pr\colon\Pr\{\trxi\in {\U}\}=1 \}$), then DFO \eqref{dfo} simplifies to a singular DFO (sDFO), i.e.,
\begin{align}
v^*=\min_{\bm x\in\X}  \inf_{\rxi\in\U} Q(\bm x,\rxi).\label{eq_uq_simple}
\end{align}
The special cases of DFO \eqref{dfo} and their variants have been applied to various fields, including reinforcement learning \cite{agarwal2020optimistic,auer2002finite,song2020optimistic}, image reconstruction \cite{hanasusanto2017ambiguous}, Bayesian optimization \cite{nguyen2019calculating,nguyen2019optimistic,nguyen2020robust}, among others.
Recent advancements in \cite{jiang2023dfo} demonstrate that DFO \eqref{dfo} can recover many robust statistics and machine learning problems. 

It is worth noting that as an opposite counterpart of DFO \eqref{dfo}, the Distributionally Robust Optimization (DRO) of the form
\begin{equation}
\min_{\bm x \in \X}\sup_{\Pr\in\P}\E_{\Pr}\left[Q(\bm x, \trxi)\right], \label{dro}
\end{equation}
has recently successfully addressed many decision-making problems under uncertainty to achieve better out-of-sample performance guarantees (see more discussions in \cite{esfahani2018data,sun2023distributionally,shafieezadeh2015distributionally,mohajerin2018data}). 
A natural way to bridge DFO \eqref{dfo} and DRO \eqref{dro} is through the so-called Hurwicz criterion, proposed in \cite{hurwicz1951generalized,arrow1972optimality}, which can be used to effectively balance the tradeoff between DFO \eqref{dfo} and DRO \eqref{dro}. 
The Hurwicz model has been studied in many decision-making problems (see, e.g., \cite{chen2021regret,chen2020robust,koccyiugit2020distributionally,qi2016preferences}). For example, \cite{qi2016preferences} presented empirical evidence supporting the use of the Hurwicz criterion and showcased its potential predictive capability in path selection and traffic equilibrium. \cite{chen2021regret} provided the analysis on using the Hurwicz model to reduce over-conservatism and achieve better out-of-sample performance in the newsvendor model.
Specifically, for a given level of optimism $\bar{\lambda}\in[0,1]$, the Hurwicz model considers
\begin{align*}
 \min_{\bm x \in \X} \left\{ \bar{\lambda} \inf_{\Pr\in\P}\E_{\Pr}\left[Q(\bm x, \trxi)\right] + (1-\bar{\lambda})\sup_{\Pr\in\P}\E_{\Pr}\left[Q(\bm x, \trxi)\right]\right\}.\tag{Hurwicz Model}
\end{align*}
Moreover, the discrepancy between DRO \eqref{dro} and DFO \eqref{dfo} can be used to upper bound the regret. That is, in the face of the ambiguity set $\P$, the decision-maker chooses a solution $\bm{x}$ to minimize the distributionally robust regret, which is
\begin{align*} 
\min_{\bm x \in \X}\sup_{\Pr\in\P}\left\{E_{\Pr}\left[Q(\bm x, \trxi)\right] - \min_{\bm y \in \X}\E_{\Pr}\left[Q(\bm y, \trxi)\right]\right\}. \tag{Regret}
\end{align*} 
The regret is upper bounded by the following value
\begin{align*}
\min_{\bm x \in \X}\sup_{\Pr\in\P}\E_{\Pr}\left[Q(\bm x, \trxi)\right] -  \min_{\bm x \in \X}\inf_{\Pr\in\P}\E_{\Pr}\left[Q(\bm x, \trxi)\right],
\end{align*}
which represents the possible range of objective function values. The difference between DRO \eqref{dro} and DFO \eqref{dfo} can also be used to quantify the level of uncertainty and to bound the value of the stochastic program (see, e.g., \cite{chen2021regret,chinchilla2022stochastic}). 

While DFO \eqref{dfo} can be applied to many domains, a thorough complexity analysis of DFO \eqref{dfo} remains absent from existing works. This paper aims to bridge this gap by exploring the computational complexities of DFO \eqref{dfo} in depth. Specifically, when comparing DFO \eqref{dfo} with DRO \eqref{dro}, we see that the inner supremum operator in DRO \eqref{dro} maintains the convexity, but the inner infimum in DFO \eqref{dfo} typically undermines this convexity (see, e.g., \cite{beck2009duality}). To address this challenge, we revisit and expand upon the mixed-integer convex programming representability concept, adapting it to DFO \eqref{dfo}. Our results allow standard optimization solvers to solve DFO \eqref{dfo} efficiently. 
We expect that the mixed-integer convex programming representability results presented in this paper can be applied to decision-dependent uncertainty (see the discussions in \cite{nohadani2018optimization,goel2006class}). Throughout this paper, we make the
following assumptions of set $\X$:
\begin{assumption}\label{assum_0}
 Set $\X$ is compact and has a non-empty relative interior.
\end{assumption}
Assumption~\ref{assum_0} is standard in the stochastic optimization literature (see, e.g., \cite{shapiro2002minimax}). It is worth mentioning that Assumption~\ref{assum_0} is useful to prove that DFO may not be mixed-integer convex programming representable. We also note that the result in this paper can be directly extended to mixed-integer compact convex set $\X$, which can be written as a finite union of compact convex sets with a non-empty relative interior.


\subsection{Relevant Literature}
Recent studies on DFO address outliers and uncertainties of decision-making problems (see, e.g.,  \cite{norton2017optimistic, song2020optimistic,royset2022rockafellian, gotoh2023data,jiang2023dfo}). Notably, \cite{royset2022rockafellian} studied two-stage stochastic optimization without relatively complete recourse from the DFO perspective, using the Rockafellian relaxation for perturbation analysis to tackle model uncertainty. \cite{norton2017optimistic} applied DFO to manage noises and outliers in machine learning, while \cite{gotoh2023data} investigated the out-of-sample performance and sensitivity analysis of DFO solutions, especially those involving regularization. For a comprehensive understanding of DFO, readers are directed to the discussions in \cite{royset2022rockafellian,jiang2023dfo} and the references cited therein.

Despite its vital importance and wide applications, DFO \eqref{dfo} often faces a significant challenge: the absence of convexity, which renders it more complex to solve compared to its DRO counterpart. For example, \cite{norton2017optimistic} investigated intractable machine learning problems by employing a nonconvex regularizer based on DFO. This paper is quite different from \cite{jiang2023dfo}. The latter work provided a comprehensive introduction to the DFO framework to illustrate its suitability for decision-making, such as showing how DFO \eqref{dfo} can recover many robust statistics. However, \cite{jiang2023dfo} did not study the computational tractability of DFO \eqref{dfo}. In fact, to date, the computational tractability of DFO has not been extensively explored in the literature. To fill this gap, our paper studies the tractability and complexity of DFO \eqref{dfo}. Recognizing that strong conditions are required for the tractability of DFO \eqref{dfo}, we study the conditions that allow for its representation as a mixed-integer convex program. Our results build upon recent findings by \cite{lubin2022mixed} in mixed-integer convex programming representability, focusing on special cases of DFO that can be precisely formulated as mixed-integer convex programs. Leveraging this notion of mixed-integer convex programming representability advances our understanding of DFO and opens avenues for more efficient computational approaches.

\subsection{Contributions}

This paper complements the literature by providing the tractability and complexity analysis for DFO \eqref{dfo}. We prove that solving DFO \eqref{dfo} is NP-hard in general, and we provide conditions under which DFO \eqref{dfo} can be tractable, i.e., when the function $Q(\bm x,\rxi)$ is convex or concave piecewise affine in $\bm{x}$ in sDFO \eqref{eq_uq_simple} under mild conditions. We generalize the mixed-integer convex programming (MICP) representable (MICP-R) notion, introduced in \cite{lubin2022mixed}. Particularly, we show the sufficient conditions under which sDFO \eqref{eq_uq_simple} and DFO \eqref{dfo} can be MICP-R. Table~\ref{tab_summary} summarizes our main MICP-R results. We numerically demonstrate the value of MICP-R for DFO \eqref{dfo} and find that the MICP-R formulation can dramatically improve the running time, even for small-scale instances. The MICP-R results can be directly applied to the Hurwicz model, which can effectively reduce over-conservatism and achieve better out-of-sample performance. This is also the first-known complexity result of the Hurwicz model.

\begin{table}[htbp]
\vspace{-1.5em}
\centering
\caption{Summary of MICP-R and Not MICP-R Results}
\setlength{\tabcolsep}{1pt} 
\renewcommand{\arraystretch}{1} 
\label{tab_summary}
\begin{center}
\begin{tabular}{|c|c|c|c|}
\hline
\multirow{5}{*}{sDFO \eqref{eq_uq_simple} } & Function $Q(\bm x, \rxi)$& Uncertainty Set & Results\\ \cline{2-4} 
& \multirow{2}{*}{\makecell{Concave Piecewise Affine\\ Section~\ref{eq_uq_simple_cases_concave}}} & \multirow{4}{*}{ $\U=\{\rxi : \|\rxi-\rxi^0\|_p\leq \theta\}$}   & $p\in\{1,\infty\}:$ MICP-R \\ \cline{4-4} 
& &   & $p\in(1,\infty):$ Not MICP-R \\ \cline{2-2} \cline{4-4} 
& \multirow{2}{*}{\makecell{Convex Piecewise Affine\\Section~\ref{eq_uq_simple_cases_convex} }}  & & $p\in\{1,\infty\}$ \& Mild Conditions: MICP-R \\ \cline{4-4} 
& &   & $p\in(1,\infty):$ Not MICP-R\\ \hline
\hline
\multirow{4}{*}{DFO \eqref{dfo}}  & \multirow{4}{*}{\makecell{Function $Q(\bm x, \rxi)$ \\ Can be MICP-R}}  & Ambiguity Set   & Results\\ \cline{3-4} 
& & \multirow{2}{*}{\makecell{Type-$\infty$ Wasserstein \\ Section~\ref{DFO_Wasserstein_MICPR}}} & $p\in\{1,\infty\}$ \& Mild Conditions: MICP-R  \\ \cline{4-4} 
& & & $p\in(1,\infty):$ Not MICP-R \\ \cline{3-4} 
& & \makecell{Finite Support\\Section~\ref{sec_dfo_finite_support} } & MICP-R \\ \hline
\end{tabular}
\end{center}
\vspace{-2em}
\end{table}

\noindent\textbf{Organization.} The remainder of the paper is organized as follows. Section~\ref{sec_preliminary} reviews and extends the MICP-R notion. Section~\ref{sec_piecewise_affine} discusses the tractability analysis, complexity analysis, and MICP-R formulations for sDFO \eqref{eq_uq_simple}. Section~\ref{sec_ambiguity_set} discusses the tractability analysis, complexity analysis, and MICP-R formulations for DFO \eqref{dfo}. Section~\ref{sec_numerical} numerically demonstrates the value of MICP-R formulations for DFO \eqref{dfo}.
Section~\ref{sec_conclusion} concludes the paper.

\noindent\textbf{Notation.} The following notation is used throughout the paper. We use bold letters (e.g., $\bm{x}, \bm{A}$) to denote vectors and matrices and use corresponding non-bold letters to denote their components. 
We let $\|\cdot\|_*$ denote the dual norm of a general norm $\|\cdot\|$. We let $\bm{e}$ be the vector or matrix of all ones, and let $\bm{e}_i$ be the $i$th standard basis vector.
Given an integer $n$, we let $[n]:=\{1,2,\ldots,n\}$, and use $\Re_+^n:=\{\bm {x}\in \Re^n:x_i\geq0, \forall i\in [n]\}$. Given a real number $t$, we let $(t)_+:=\max\{t,0\}$ and $(t)_-:=\min\{t,0\}$. 
Given a set $I$, we use $\mathrm{ext}\{I\}$ to represent its extreme points. 
We let $\tilde{\bm\xi}$ denote a random vector and denote its realizations by $\bm\xi$. 
Given a probability distribution $\Pr$ defined on support $\U$ with sigma-algebra $\F$ and a $\Pr$-measurable function $g(\bm{\xi})$, we use $\Pr\{A\}$ to denote $\Pr\{\trxi:\text{condition} \ A(\trxi) \ \text{holds}\}$ when $A(\trxi)$ is a condition on $\bm\xi$, and to denote $\Pr\{\trxi\colon \trxi \in A\}$ when $A \in \F$ is $\Pr$-measurable, and we let ${\rm{ess.sup}}_{\Pr}(g(\trxi))$ denote the essential supremum of the random function $g(\trxi)$. 
Given a set $R$, the characteristic function $\chi_{R}(\bm x)=0$ if $\bm x \in R$, and $\infty$, otherwise. 
 We let $\delta_{\omega}$  denote for the Dirac distribution that places unit mass on the realization $\omega$. We use $\floor{x}$ and $\ceil{x}$ to denote the largest integer $y$ satisfying $y\leq x$ and the smallest integer $y$ satisfying $y\geq x$ for any $x\in \Re$, respectively. Additional notations will be introduced as needed.

\section{NP-hardness of DFO and the MICP-R Notion}
\label{sec_preliminary}

DFO \eqref{dfo} can be viewed as a biconvex program, which is notoriously known to be computationally challenging. Hence, in this section, we first show that solving DFO \eqref{dfo} is, in general, an NP-hard problem. Nevertheless, we are able to identify tractable DFO special cases, where in this section, we formally define the tractability. In contrast to biconvex programs, mixed-integer convex programs have recently been shown to be more scalable and capable of solving many large-scale problems (see more discussions in \cite{achterberg2013mixed,achterberg2020presolve} and the references cited therein). Therefore, we focus on studying MICP-R reformulations of DFO \eqref{dfo}, and this section formally defines the MICP-R notions as a preliminary of our main results.

 \subsection{NP-hardness and Tractability of DFO}
We observe that evaluating the most favorable objective function value of DFO \eqref{dfo} for a given decision can be NP-hard, even under a very simple setting.

\begin{restatable}{proposition}{propevadfo}\label{prop_eva_dfo} 
Computing the inner infimum of DFO \eqref{dfo}, in general, is NP-hard even when the ambiguity set $\P = \{\Pr\colon\Pr\{\trxi\in {\U}\}=1\}$ with box uncertainty set $\U$ and the recourse function $Q(\bm x,\rxi)$ only involves the objective uncertainty. 
\end{restatable}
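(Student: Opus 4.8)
The plan is to prove NP-hardness by a polynomial-time reduction from the maximum cut (Max-Cut) problem, exploiting the fact that a single evaluation of the inner infimum can conceal an exponential-size combinatorial optimization. First I would note that under the sDFO ambiguity set $\P=\{\Pr\colon\Pr\{\trxi\in\U\}=1\}$ the inner infimum collapses to $\inf_{\rxi\in\U}Q(\bm x,\rxi)$, since the infimum of $\E_{\Pr}[Q(\bm x,\trxi)]$ over all distributions supported on $\U$ equals the pointwise infimum over $\U$. Because the claim concerns a \emph{fixed} decision, I would then choose a recourse function that does not depend on $\bm x$ at all, so that the entire difficulty sits in minimizing a nonconvex function of $\rxi$ over a box. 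The goal is to engineer $Q(\cdot,\rxi)$, using \emph{only} uncertainty in the cost coefficients, so that it is concave in $\rxi$ and its infimum over a box equals the negative of a Max-Cut value.

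Concretely, given a graph $G=(V,E)$ with $|V|=m$, let $\bm B\in\Re^{m\times|E|}$ be its signed incidence matrix and $\bm L=\bm B\bm B^\top\succeq 0$ the (positive semidefinite) Laplacian. I would take the box support $\U=[-1,1]^m$, the ambiguity set $\P=\{\Pr\colon\Pr\{\trxi\in\U\}=1\}$, and the recourse
\[
Q(\bm x,\rxi)=\min_{\bm y\in\Re^{|E|}}\Bigl(\|\bm y\|_2^2-2\,\rxi^\top\bm B\bm y\Bigr)=-\|\bm B^\top\rxi\|_2^2=-\rxi^\top\bm L\rxi,
\]
whose unconstrained minimizer is $\bm y=\bm B^\top\rxi$. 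Here $\rxi$ enters only through the linear cost term $-2\,\rxi^\top\bm B\bm y$, while the feasible region and the quadratic part are deterministic, so this is a bona fide instance of objective (cost) uncertainty over the box $\U$. The combinatorial identity I would record is that for every $\rxi\in\{-1,1\}^m$ one has $\rxi^\top\bm L\rxi=\sum_{(i,j)\in E}(\xi_i-\xi_j)^2=4\,|\{(i,j)\in E\colon \xi_i\neq\xi_j\}|$, i.e.\ $\tfrac14\rxi^\top\bm L\rxi$ is exactly the size of the cut induced by the $\pm1$ labeling $\rxi$.

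The remaining step is to pass from the continuous box to its vertices. Since $\bm L\succeq0$, the map $\rxi\mapsto\rxi^\top\bm L\rxi$ is convex, and a convex function attains its maximum over the polytope $[-1,1]^m$ at an extreme point, that is, at some $\rxi\in\mathrm{ext}\{[-1,1]^m\}=\{-1,1\}^m$. Combining this with the identity above gives
\[
\inf_{\rxi\in\U}Q(\bm x,\rxi)=-\max_{\rxi\in[-1,1]^m}\rxi^\top\bm L\rxi=-\max_{\rxi\in\{-1,1\}^m}\rxi^\top\bm L\rxi=-4\max_{\rxi\in\{-1,1\}^m}\mathrm{cut}(\rxi),
\]
so any oracle evaluating the inner infimum would also compute the maximum cut of $G$. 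As this construction is polynomial in $|V|$ and $|E|$ and Max-Cut is NP-hard, NP-hardness of the inner infimum follows. The main obstacle is not the algebra but the modeling: one must produce an inner problem that is genuinely nonconvex, so that box minimization is hard, while simultaneously respecting the box-support restriction and the requirement that $\rxi$ appear only in the objective. The factorization trick $-\rxi^\top\bm L\rxi=\min_{\bm y}(\|\bm y\|_2^2-2\,\rxi^\top\bm B\bm y)$ is precisely what reconciles these constraints, and the convex-maximization/extreme-point argument is what bridges the continuous evaluation and the discrete Max-Cut value. An alternative would start from the known NP-hardness of minimizing a concave quadratic over a hypercube, but the Max-Cut route keeps the objective-uncertainty interpretation most transparent.
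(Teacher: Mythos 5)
Your proof is correct, and it takes a genuinely different route from the paper's. The paper reduces from the (weakly NP-hard) set partition problem: it takes the LP recourse $Q(\bm x,\rxi)=\min_{\bm y\in\Y}\sum_{i\in[N]}\xi^i y_i$ with $\Y=\{\bm y\in[-1,1]^N\colon \sum_{i\in[N]}w_iy_i=0\}$ and the box $\U=[-1,1]^N$, eliminates $\rxi$ first to obtain $\min_{\bm y\in\Y}\bigl(-\sum_{i\in[N]}|y_i|\bigr)$, and observes that the optimal value equals $-N$ iff some $\bm y\in\{-1,1\}^N$ is feasible, i.e., iff a set partition exists. You instead eliminate $\bm y$ first: the factorization $\min_{\bm y}\bigl(\|\bm y\|_2^2-2\rxi^\top\bm B\bm y\bigr)=-\rxi^\top\bm L\rxi$ turns the inner infimum into maximization of the convex quadratic $\rxi^\top\bm L\rxi$ over $[-1,1]^m$, which the extreme-point property of convex maximization pins to $\{-1,1\}^m$ and hence to Max-Cut. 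Both arguments exploit the same structural phenomenon---the value function of the inner minimization is concave in $\rxi$ (piecewise linear in the paper, quadratic in yours), so the box minimization is driven to vertices---but the payoffs differ. The paper's construction stays inside two-stage stochastic \emph{linear} programming (bounded polyhedral recourse, linear objective in $\bm y$), which is the narrower and more standard problem class; yours uses an unconstrained strongly convex QP recourse, which the proposition's wording still permits since $\rxi$ enters only the cost coefficients. In exchange, your reduction is from the strongly NP-hard Max-Cut, so it rules out even pseudo-polynomial algorithms, whereas set partition is only weakly NP-hard, so the paper's reduction by itself leaves pseudo-polynomial solvability open for its instance family. One small point worth making explicit if you write this up: the identification $\inf_{\Pr\in\P}\E_{\Pr}[Q(\bm x,\trxi)]=\inf_{\rxi\in\U}Q(\bm x,\rxi)$ that you invoke at the start holds here because the Dirac measures $\delta_{\rxi}$ with $\rxi\in\U$ belong to $\P$ and $Q(\bm x,\cdot)$ is continuous and bounded on the compact box; this is the same passage from DFO to sDFO that the paper performs implicitly.
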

\begin{proof} 
Let us consider the NP-complete problem --- set partition problem, which asks
\begin{quote}\it
\textbf{Set partition problem.} Given $N$ nonnegative integers $w_1, w_2, \cdots, w_N$, does there exist one set partition $S$, such that $\sum_{i\in S}w_i=\sum_{i\in[N]\setminus S} w_i$?
\end{quote}
In DFO \eqref{dfo}, let the ambiguity set $\P = \{\Pr\colon\Pr\{\trxi\in {\U_I}\}=1\}$ with an interval uncertainty set $\U_I=[-1,1]^N$,
and let 
\begin{align*}
Q(\bm x,\rxi) =\min_{\bm y\in\Y}\sum_{i\in[N]}\xi^i(\bm a_i^\top \bm y-b_i),
\end{align*}
where $\bm a_i=\bm e_i$ and $b_i=0$ for each $i\in[N]$, and  set $\Y=\{\bm y\in \Re^N: -1\leq y_i\leq 1, \forall j\in[N],\sum_{i\in[N]} w_jy_j=0\}$.
In this setting, the inner infimum of DFO \eqref{dfo} reduces to 
\begin{subequations}
\begin{align}\label{example_uq_hard}
v^*=\min_{\rxi,\bm y}\left\{\sum_{i\in[N]}\xi^iy_i\colon -1\leq \xi^i\leq 1, \forall i\in[N],  -1\leq y_i\leq 1, \forall i\in[N],\sum_{i\in[N]} w_iy_i=0 \right\}.
\end{align}
Above, optimizing over $ \rxi$ first, problem \eqref{example_uq_hard} reduces to 
\begin{align}\label{example_uq_hard_opt_y}
v^*=\min_{\bm y}\left\{ -\sum_{i\in[N]}\max\left(y_i,0\right)+\sum_{i\in[N]}\min\left(y_i,0\right)\colon  -1\leq y_i\leq 1, \forall i\in[N],\sum_{i\in[N]} w_iy_i=0 \right\}.
\end{align}
\end{subequations}
Then, we observe that the optimal value $v^*=-N$ in \eqref{example_uq_hard_opt_y} if and only if there exists an optimal solution $\bm y^*\in\{-1,1\}^N$, i.e., the optimal value $v^*=-N$ in \eqref{example_uq_hard_opt_y} if and only if there exists a set partition such that $\sum_{i\in S}w_i=\sum_{i\in[N]\setminus S} w_i$. Since the set partition problem is NP-hard, solving problem \eqref{example_uq_hard_opt_y} is NP-hard. That is, computing the inner infimum of DFO \eqref{dfo} is NP-hard.
\QEDA
\end{proof}

Proposition \ref{prop_eva_dfo} motivates us to explore special cases under which computing the inner infimum of DFO \eqref{dfo} is tractable. 
Formally, we define tractable convex programs following the convention from work \cite{ben2009robust}, as below.
\begin{definition}\label{def_tract}
(Tractability, theorem A.3.3 in \cite{ben2009robust}) Suppose that for any given compact set $\X\subseteq \Re^n$, which has a nonempty relative interior and is contained in a Euclidean ball with radius $R$ and is containing a Euclidean ball with radius $r$, then there exists an efficient algorithm to solve the favorable problem $\min_{\bm x\in\X}\inf_{\Pr\in\P} \E_{\Pr}[Q(\bm x,\trxi)]$ to $\hat\varepsilon>0$ accuracy, whose running time is polynomial in $n,m,\ln(R/r),\ln(1/\hat\varepsilon)$, and the encoding length of $\min_{\bm x\in\X}\inf_{\Pr\in\P} \E_{\Pr}[Q(\bm x,\trxi)]$.
\end{definition}

\subsection{The MICP-R Notion}
As this paper aims to explore conditions under which DFO \eqref{dfo} can be mixed-integer convex programming (MICP) representable (MICP-R), we formally define this notion, initially introduced in the work \cite{lubin2022mixed}, below.
\begin{definition}\label{def_micpr}
\begin{itemize}
\item[(i)] (definition 1.1 in \cite{lubin2022mixed}) Given $n,p,d\in\Ze_+$, suppose that sets $\S\subseteq\Re^n$ and $\M\subseteq \Re^{n+p+d}$ are closed and convex. Then the tuple $(\M,p,d)$ induces an MICP formulation of set $\S$ if 
\begin{align*}
\bm x\in\S \Leftrightarrow \exists \bm y\in \Re^p, \bm z\in\Ze^d, \textup{s.t. } (\bm x,\bm y,\bm z)\in \M;
\end{align*}
\item[(ii)] (An MICP-R Set,  definition 1.2 in \cite{lubin2022mixed}) A set $\S\in\Re^n$ is  MICP representable (MICP-R) if there exists a closed convex set $\M$ and two positive integers $p$ and $d$ that induce an MICP formulation of set $\S$; 
\item[(iii)] (An MICP-R Function) A function $f:\S\rightarrow \Re$ is MICP-R if both its domain $\S$ and its epigraph are MICP-R; and
\item[(iv)] (An MICP Formulation) A mathematical program is MICP-R if both its feasible region and objective function are MICP-R.
\end{itemize}
\end{definition}

The definition of not being MICP-R is simply the opposite of being MICP-R, which is, unfortunately, difficult to verify in practice. Fortunately, the authors in \cite{lubin2022mixed} provided a simple and sufficient condition to prove that a set is not MICP-R.
\begin{lemma}\label{lemma2_lubin}(lemma 4.1 in \cite{lubin2022mixed}) A set $\S\in\Re^n$ is not MICP-R if there exists an infinite sequence $\{\hat{\bm{x}}^j\}_j$ such that $\hat{\bm{x}}^{j_1}\neq \hat{\bm{x}}^{j_2}\in \S$ for all $j_1\neq j_2$ and $1/2(\hat{\bm{x}}^{j_1}+ \hat{\bm{x}}^{j_2})\notin \S$.
\end{lemma}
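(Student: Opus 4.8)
The plan is to argue by contradiction, with the parity of the integer lifting variables doing all the work. Suppose, to the contrary, that $\S$ is MICP-R. Then by Definition~\ref{def_micpr} there exist positive integers $p,d$ and a closed convex set $\M\subseteq\Re^{n+p+d}$ such that $\bm x\in\S$ if and only if there is some $(\bm y,\bm z)\in\Re^p\times\Ze^d$ with $(\bm x,\bm y,\bm z)\in\M$. First I would use the hypothesis $\hat{\bm x}^j\in\S$ to select, for every index $j$, a lifting $(\hat{\bm x}^j,\hat{\bm y}^j,\hat{\bm z}^j)\in\M$ with $\hat{\bm z}^j\in\Ze^d$. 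The target is then to exhibit two indices whose lifted midpoint stays feasible for $\M$ \emph{and} keeps an integral last block, since such a midpoint would force $\tfrac12(\hat{\bm x}^{j_1}+\hat{\bm x}^{j_2})\in\S$, contradicting the assumption that every pairwise midpoint lies outside $\S$.

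The key step is a pigeonhole argument on parity. I would map each integer vector $\hat{\bm z}^j$ to its residue $\hat{\bm z}^j \bmod 2\in\{0,1\}^d$. Since there are only $2^d$ residue classes but infinitely many indices, some class contains an infinite subcollection of the $\{\hat{\bm z}^j\}_j$; restricting to this subsequence preserves both the pairwise distinctness of the $\hat{\bm x}^j$ and the property that their pairwise midpoints avoid $\S$. For any two distinct indices $j_1,j_2$ in this subsequence we have $\hat{\bm z}^{j_1}\equiv\hat{\bm z}^{j_2}\pmod 2$, so $\tfrac12(\hat{\bm z}^{j_1}+\hat{\bm z}^{j_2})\in\Ze^d$. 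Convexity of $\M$ then yields $\tfrac12\big[(\hat{\bm x}^{j_1},\hat{\bm y}^{j_1},\hat{\bm z}^{j_1})+(\hat{\bm x}^{j_2},\hat{\bm y}^{j_2},\hat{\bm z}^{j_2})\big]\in\M$, a point whose $\bm z$-block is integral. Invoking the MICP formulation once more gives $\tfrac12(\hat{\bm x}^{j_1}+\hat{\bm x}^{j_2})\in\S$, the desired contradiction.

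I expect the main obstacle to be conceptual rather than computational: a naive attempt only shows that the integer labels $\hat{\bm z}^j$ must be pairwise distinct (otherwise a midpoint within a single integer slice already lands in $\S$), and distinctness alone is not contradictory because $\Ze^d$ is infinite. The crucial realization is that what matters is not distinctness but \emph{parity}, which collapses the countably many integer values into the finitely many classes of $\{0,1\}^d$ and thereby guarantees a pair whose average is again integral. Once this parity reduction is in place, the rest is just convexity of $\M$ together with the definition of MICP-R, so no delicate estimates are required; I would only need to verify carefully that passing to the infinite parity-homogeneous subsequence legitimately inherits the two hypotheses imposed on the original sequence.
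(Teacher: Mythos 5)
Your proof is correct and coincides with the argument behind this lemma: the paper states it by citation to lemma 4.1 of \cite{lubin2022mixed}, and the proof there is exactly your parity pigeonhole --- infinitely many liftings $(\hat{\bm{x}}^j,\hat{\bm{y}}^j,\hat{\bm{z}}^j)\in\M$ but only $2^d$ residue classes of $\hat{\bm{z}}^j \bmod 2$, so two liftings share a class, their $\bm z$-average is integral, and convexity of $\M$ forces the midpoint of the corresponding $\hat{\bm{x}}$'s into $\S$, a contradiction. The only cosmetic remark is that you need merely two indices in a common parity class rather than an infinite subcollection, though extracting one is harmless.
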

For brevity of notation, we also introduce the McCormick representation \cite{mccormick1976computability} of a simple bilinear set having a binary variable.
\begin{definition}
\label{mc_defintion}
(McCormick Representation of a Simple Bilinear Set, \cite{mccormick1976computability}) The bilinear set $\{(s,\lambda, \gamma)\in \Re\times \{\lambda_l,\lambda_u\}\times [\gamma_l,\gamma_u]:s=\lambda\gamma\}$ admits the following mixed-integer linear programming (MILP) McCormick representation:
\begin{align*}
\mathcal{MI} (\lambda_l,\lambda_u,\gamma_l,\gamma_u)
=&  \left\{(s,\lambda, \gamma)\colon \begin{aligned}
& s\in \Re, \lambda \in \{\lambda_l,\lambda_u\}, \gamma_l\leq  \gamma \leq \gamma_u,\\
& s \geq \lambda_l\gamma+\lambda\gamma_l -\lambda_l\gamma_l, s \geq \lambda_u\gamma+\lambda\gamma_u -\lambda_u\gamma_u,\\
& s \leq \lambda_u\gamma+\lambda\gamma_l -\lambda_u\gamma_l, s \leq \lambda\gamma_u+\lambda_l\gamma -\lambda_l\gamma_u 
\end{aligned}
\right\},
\end{align*}
where $\lambda_l,\lambda_u$ and $\gamma_l,\gamma_u$ are the known lower and upper bounds for $\lambda$ and $\gamma$, respectively.
\end{definition}
According to Definition~\ref{def_micpr} and Definition~\ref{mc_defintion}, the following result shows that the reverse norm function $f(\bm{x})=-\|\bm x\|_{p}+\chi_{\X}(\bm{x})$ can be either MICP-R or not MICP-R, which depends on the norm $-\|\cdot\|_{p}$ (recall that set $\X$ is compact and has a nonempty relative interior based on Assumption~\ref{assum_0}).

\begin{restatable}{lemma}{micpexample}\label{micp_example} 
The reverse norm function $f(\bm{x})=-\|\bm x\|_{p}+\chi_{\X}(\bm{x})$ is MICP-R if $p\in \{1,\infty\}$
and is not MICP-R if $p\in(1,\infty)$.
\end{restatable}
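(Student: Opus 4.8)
The plan is to prove the two cases separately, using Definition~\ref{def_micpr} for the MICP-R direction and Lemma~\ref{lemma2_lubin} for the not-MICP-R direction. For the function $f(\bm x)=-\|\bm x\|_p+\chi_{\X}(\bm x)$ to be MICP-R, by Definition~\ref{def_micpr}(iii) I must show both its domain $\X$ and its epigraph $\epi(f)=\{(\bm x,t): \bm x\in\X,\ t\geq -\|\bm x\|_p\}$ are MICP-R. The domain $\X$ is closed convex (compact with nonempty relative interior) and hence trivially MICP-R with no auxiliary variables. So the entire difficulty concentrates in representing the epigraph of the reverse norm, i.e.\ the constraint $t\geq -\|\bm x\|_p$, equivalently $\|\bm x\|_p\geq -t$, which is a \emph{reverse} convex constraint and is where nonconvexity enters.

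For the case $p\in\{1,\infty\}$, the key observation is that $\|\bm x\|_1=\max_{\bm s\in\{-1,1\}^m}\bm s^\top\bm x$ and $\|\bm x\|_\infty=\max_{i\in[m]}\max\{x_i,-x_i\}$ are both maxima over finitely many linear functions, so the reverse norm $-\|\bm x\|_p$ is a minimum of finitely many linear (hence concave) functions. First I would write, for $p=\infty$, the epigraph condition $t\geq -\|\bm x\|_\infty$ as: there exists $i\in[m]$ and a sign with $t\geq -x_i$ or $t\geq x_i$ simultaneously failing to be too negative; more precisely I encode the active-piece selection with binary variables. Concretely, using an index-selection binary vector $\bm z\in\{0,1\}^{2m}$ summing to one together with a big-$M$ (finite because $\X$ is compact, by Assumption~\ref{assum_0}), I can force $t$ to equal $-\max_i|x_i|$ at the selected coordinate; the McCormick representation of Definition~\ref{mc_defintion} handles the resulting products of the binary selectors with the continuous $x_i$ exactly. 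For $p=1$, I would instead introduce binary sign variables $\bm s\in\{-1,1\}^m$ and model $\|\bm x\|_1=\bm s^\top\bm x$ at the sign pattern realizing the maximum, again linearizing each product $s_i x_i$ via $\mathcal{MI}(\cdot)$ from Definition~\ref{mc_defintion}; the resulting set $\M$ is a finite union of polyhedra lifted into a closed convex set through binaries, producing a valid MICP formulation. Since $\X$ is compact the big-$M$ bounds are finite and the lifted set $\M$ is closed and convex as required.

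For the case $p\in(1,\infty)$ I would invoke Lemma~\ref{lemma2_lubin} to show $\epi(f)$ is not MICP-R. The strategy is to exhibit an infinite sequence of distinct points in $\epi(f)$ whose pairwise midpoints all lie outside $\epi(f)$. The natural candidate is to restrict attention to the unit sphere in the $\ell_p$ norm: choosing points $(\hat{\bm x}^j, -1)$ with $\|\hat{\bm x}^j\|_p=1$, each lies on the boundary of the epigraph. The strict convexity of $\|\cdot\|_p$ for $p\in(1,\infty)$ gives $\|\tfrac12(\hat{\bm x}^{j_1}+\hat{\bm x}^{j_2})\|_p<1$ whenever $\hat{\bm x}^{j_1}\neq\hat{\bm x}^{j_2}$, so $-\|\tfrac12(\hat{\bm x}^{j_1}+\hat{\bm x}^{j_2})\|_p>-1$, meaning the midpoint $(\tfrac12(\hat{\bm x}^{j_1}+\hat{\bm x}^{j_2}),-1)$ violates $t\geq -\|\bm x\|_p$ and lies outside $\epi(f)$. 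I would pick infinitely many distinct unit-sphere points inside the relative interior of $\X$, which is possible precisely because Assumption~\ref{assum_0} guarantees $\X$ has nonempty relative interior and therefore contains a small ball on which infinitely many such rescaled points exist.

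The main obstacle is the not-MICP-R direction: I must ensure the chosen sequence genuinely lives in $\X$ (so that the points are in $\epi(f)$, not merely in the epigraph of the unrestricted reverse norm) while still exploiting strict convexity of the $\ell_p$ ball. This requires carefully using the nonempty relative interior from Assumption~\ref{assum_0} to embed an $\ell_p$-sphere arc, possibly after an affine shift and scaling, inside $\X$, and checking the midpoint-exclusion argument survives this embedding. A secondary technical point on the MICP-R side is verifying that the big-$M$ constants are finite and that the lifted set $\M$ is closed and convex; both follow from compactness of $\X$, but I would state the bounds explicitly to keep the McCormick linearization exact.
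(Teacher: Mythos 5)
Your proposal is correct and follows essentially the same route as the paper: for $p\in\{1,\infty\}$ the paper likewise exploits the finite-max (piecewise-linear) structure of the norm, lifting with sign binaries plus McCormick linearization for $p=1$ and a disjunction over the $2n$ linear pieces (via disjunctive programming, which your big-$M$ selection formulation realizes equivalently on a compact $\X$) for $p=\infty$; for $p\in(1,\infty)$ the paper also applies Lemma~\ref{lemma2_lubin}, using Assumption~\ref{assum_0} to extract infinitely many distinct points of $\X$ of equal $\ell_p$-norm so that strict convexity pushes every pairwise midpoint off the epigraph boundary. Your explicit attention to embedding the equal-norm points inside $\X$ and to finiteness of the big-$M$ constants matches (and if anything slightly sharpens) the paper's treatment, so no gap.
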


\begin{subequations}
\begin{proof}
We focus on the MICP-R formulation of the epigraph of the function $f(\cdot)$, which reads as
\begin{align}
\epi(f)=\left\{(\bm x,t)\colon-\|\bm x\|_{p} \leq t, \bm x\in\X\right\}. \label{epigraph_set_micp_lemma}
\end{align}
Next, we split the proof into three cases based on the choice of $p$. 

\noindent\textbf{Case 1: When $p=1$, i.e., the norm is $L_1$,}  we have 
\begin{align*}
\epi(f)=\left\{(\bm x,t)\colon \sum_{i\in[n]}|x_i|\geq -t,\bm x\in\X \right\},
\end{align*}
which is equivalent to
\begin{align*}
\epi(f)=\left\{(\bm x,t)\colon \max_{\bm{z}\in\{-1,1\}^n}\sum_{i\in[n]}x_iz_i\geq -t,\bm x\in\X \right\},
\end{align*}
or 
\begin{align*}
\epi(f)=\left\{(\bm x,t)\colon \sum_{i\in[n]}x_iz_i\geq -t,\bm x\in\X,\bm{z}\in \{-1,1\}^n \right\}.
\end{align*}
Since set $\X$ is compact, we can assume that $\X\subseteq [\bm{l},\bm{u}]$, i.e., given $\bm{x}\in \X$, we have $x_i\in[l_i,u_i]$ for each $i\in[n]$. We can apply the following McCormick inequalities (see more details in Definition \ref{mc_defintion}) to linearize the bilinear term $\{s_i:=x_iz_i\}_{i\in[n]}$, i.e., $(s_i,z_i,x_i)\in \mathcal{MI}(-1,1,l_i,u_i) $ for each $i\in[n]$.
Thus, $\epi(f)$ is MICP-R, i.e.,
\begin{align*}
\epi(f)=\left\{(\bm x,t)\colon \begin{array}{l}
\displaystyle \exists \bm{s}\in \Re^n, \sum_{i\in[n]}s_i\geq -t,\bm x\in\X,\bm{z}\in \{-1,1\}^n, \\
\displaystyle(s_i,z_i,x_i)\in \mathcal{MI}\left(-1,1,l_i,u_i\right), \forall i \in [n] \end{array} \right\}.
\end{align*}

\noindent\textbf{Case 2: When $p\in(1,\infty)$, i.e., the norm is neither $L_1$ nor $L_\infty$,} 
since set $\X$ is compact and has a nonempty relative interior, there exists an open ball $B(\bar{\bm{x}},r)$ centered at $\bar{\bm{x}}$ and a positive radius $r>0$ such that the intersection of $B(\bar{\bm{x}},r)$ and the affine space of set $\X$ is contained in set $\X$. Therefore, set $$\S:=\left\{\bm{x}\in X:\|\bm{x}\|_p=\|\bar{\bm{x}}\|_p\right\}$$ has a nonempty relative interior. Thus, we can pick a sequence of distinct elements from set $\S$ (e.g., all the possible rational elements) $\{\hat{\bm{x}}^j\}_j$. Since $(\hat{\bm{x}}^j,\|\bar{\bm{x}}\|_p)\in \epi(f)$ for each $j$ and function $\|\bm x\|_{p}$ is strictly convex for any $p\in (1,\infty)$, for any pair $(j_1,j_2)$ with $j_1\neq j_2$, we must have
\[\frac{1}{2}(\hat{\bm{x}}^{j_1},\|\bar{\bm{x}}\|_p)+\frac{1}{2}(\hat{\bm{x}}^{j_2},\|\bar{\bm{x}}\|_p) \notin \epi(f).\]
Since $\{\hat{\bm{x}}^j\}_j$ is an infinite sequence, according to Lemma~\ref{lemma2_lubin}, set $\epi(f)$ is not MICP-R. 

\noindent\textbf{Case 3: When $p=\infty$, i.e., the norm is $L_\infty$,} set $\epi(f)$ reduces to
\begin{align*}
\epi(f)=\left\{(\bm x,t)\colon  \max_{i\in [n]}|x_i| \geq -t, \forall i\in[n],\bm x\in\X\right\},
\end{align*}
which can be reformulated in the form of the following disjunction \cite{balas1979disjunctive}:
\begin{align*}
\epi(f)=\bigvee_{i\in [n]}\left\{(\bm x,t)\colon x_i\geq -t,\bm x\in\X\right\}\bigvee_{i\in [n]}\left\{(\bm x,t)\colon -x_i\geq -t,\bm x\in\X\right\}.
\end{align*}
Since set $\X$ is compact, the MICP-R formulation of set $\epi(f)$ follows the well-known results from disjunctive programming \cite{balas1979disjunctive}.
\QEDA
\end{proof}
\end{subequations}

Note that when $p=\infty$, although being MICP-R, the optimization of the function $f(\bm x)$ can be done efficiently by solving $2n$ tractable convex programs. The result in Lemma~\ref{micp_example} is useful to prove that many DFO problems can be MICP-R. Moreover, it is worth mentioning that Lemma~\ref{micp_example} is also applicable in analyzing the MICP-R formulation of many distributional robust optimization problems. For example, \cite{jiang2023also} discussed the MICP-R formulation in distributionally robust chance constrained programs under the general Wasserstein ambiguity set.

\section{sDFO with Piecewise Affine Functions}
\label{sec_piecewise_affine}
Motivated from Section~\ref{sec_preliminary}, we provide sufficient conditions for DFO \eqref{dfo} to be tractable or MICP-R. In this section, we first explore sDFO \eqref{eq_uq_simple} as a special case of DFO \eqref{dfo}. Similar to works in robust optimization literature \cite{ben2009robust,esfahani2018data,xie2020tractable}, we focus on the function $Q(\bm x,\rxi)$ being convex or concave piecewise affine in $\bm{x}$, respectively.

\subsection{sDFO \eqref{eq_uq_simple} with Concave Piecewise Affine Functions}
\label{eq_uq_simple_cases_concave}
We first consider that the 
function $Q(\bm x, \rxi)$ is the minimum of $K$ piecewise affine functions $\rxi^\top \bm a_k(\bm x)+b_k(\bm x)$ with affine mappings $\bm a_k(\bm x)=\hat{\bm{A}}_k \bm{x}+\hat{\bm a}_k\in \Re^{m}$ with $\hat{\bm{A}}_k\in \Re^{m\times n},\hat{\bm a}_k\in \Re^m$ and $b_k(\bm x)=\hat{\bm{B}}_k^\top \bm{x}+\hat{b}_k\in \Re$ with $\hat{\bm{B}}_k\in \Re^{ n},\hat{ b}_k\in \Re$ for each $k\in[K]$, i.e., $Q(\bm x, \rxi)=\min_{k\in[K]}[\rxi^\top \bm a_k(\bm x)+b_k(\bm x)]$. Suppose that the uncertainty set is defined as the ball $\U=\{\rxi : \|\rxi-\rxi^0\|_p\leq \theta\}$ with the known parameter $\rxi^0$ and the radius $\theta\geq 0$. It is worth mentioning that the reformulations and complexity analyses can be simply extended to more general uncertainty sets, such as polyhedral and ellipsoidal (see, e.g., \cite{ben2009robust}). In this subsection, for brevity, we focus on the ball uncertainty set $\U$. In this setting, sDFO \eqref{eq_uq_simple} becomes
\begin{align}
v^*=\min_{\bm x\in\X}\min_{\rxi\in\U} \min_{k\in[K]}  \left\{\rxi^\top \bm a_k(\bm x)+b_k(\bm x)\right\}. \label{robust_max_b_orginial}
\end{align}
Switching the first minimum operator with the third one and invoking the definition of dual norm, problem \eqref{robust_max_b_orginial} is further equivalent to
\begin{align*}
v^*=\min_{k\in [K]}\min_{\bm x\in \X}\left\{{\rxi^0}^\top \bm a_k(\bm x) + b_k(\bm x) - \theta \|\bm a_k(\bm x) \|_{p^*}\right\},
\end{align*}
which can be solved by selecting the lowest objective value within these $K$ mathematical programs, that is,
\begin{align}
v^*=\min_{k\in [K]}\left\{v_k^*:=\min_{\bm x\in \X}\left\{{\rxi^0}^\top \bm a_k(\bm x) + b_k(\bm x) - \theta \|\bm a_k(\bm x) \|_{p^*}\right\}\right\}. \label{robust_min_b}
\end{align}
Note that the inner minimization of sDFO \eqref{robust_min_b} is a concave minimization problem and, in general, can be difficult. However, by exploring the properties of the dual norm,  there are some conditions under which sDFO \eqref{robust_min_b} can be tractable.
\begin{restatable}{theorem}{proprobustmintractable}\label{prop_robust_min_tractable} 
sDFO \eqref{robust_min_b} can be tractable if either condition holds:
\begin{itemize}
\item[(i)] If $ \|\bm a_k(\bm x) \|_{p^*}:=C_k$ is constant for each $k\in[K]$ and $\bm{x}\in \X$, sDFO \eqref{robust_min_b} is equivalent to solving $K$ tractable convex programs, i.e., $v^*=\min_{k\in [K]}v_k^*$, where for each $k\in [K]$, we have
\begin{align*}
v_k^*=\min_{\bm x\in\X}\left\{{\rxi^0}^\top \bm a_k(\bm x) + b_k(\bm x) - \theta C_k\right\};
\end{align*}  
\item[(ii)] If $p=1$, sDFO \eqref{robust_min_b} is equivalent to solving $2mK$ tractable convex programs, i.e., $v^*=\min_{k\in [K],i\in[m],\ell\in [2]}v_{ik\ell}^*$, where for each $k\in [K]$ and $i\in[m]$, we have
\begin{align*}
v_{ik1}^*=\min_{\bm x\in\X}\left\{{\rxi^0}^\top \bm a_k(\bm x)+ b_k(\bm x) + \theta a_{ki}(\bm x)\right\},\quad v_{ik2}^*=\min_{\bm x\in\X}\left\{{\rxi^0}^\top \bm a_k(\bm x)+ b_k(\bm x) -\theta a_{ki}(\bm x)\right\}.
\end{align*}
\end{itemize}
\end{restatable}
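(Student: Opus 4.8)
The plan is to handle the two cases separately, in each reducing the concave inner minimization in \eqref{robust_min_b} to a finite family of minimizations of an \emph{affine} objective over the compact convex set $\X$, each of which is a tractable convex program in the sense of Definition~\ref{def_tract}. Throughout, I would use that $\bm a_k(\bm x)$ and $b_k(\bm x)$ are affine in $\bm x$ and that $\theta \geq 0$. For part (i), I would simply substitute $\|\bm a_k(\bm x)\|_{p^*} = C_k$ into the objective of $v_k^*$; this eliminates the only concave term and leaves ${\rxi^0}^\top \bm a_k(\bm x) + b_k(\bm x) - \theta C_k$, which is affine in $\bm x$. Minimizing an affine function over $\X$ is a tractable convex program, so each $v_k^*$ is obtained by solving one such program, and $v^* = \min_{k \in [K]} v_k^*$ follows directly from \eqref{robust_min_b}, giving exactly $K$ tractable convex programs.

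For part (ii), with $p = 1$ we have $p^* = \infty$, so the troublesome term is $-\theta \|\bm a_k(\bm x)\|_{\infty} = -\theta \max_{i \in [m]} |a_{ki}(\bm x)|$. The key step is to write the $\ell_\infty$ norm as a maximum of $2m$ affine functions, $\|\bm a_k(\bm x)\|_\infty = \max\{a_{ki}(\bm x), -a_{ki}(\bm x) : i \in [m]\}$, and then to use $\theta \geq 0$ to flip the outer maximum into a minimum via $-\theta \max_j g_j(\bm x) = \min_j (-\theta g_j(\bm x))$, where $g_j$ ranges over these $2m$ pieces. Substituting into $v_k^*$ produces a joint minimization over $\bm x \in \X$ and over the $2m$ affine pieces; since a finite minimization commutes with $\min_{\bm x \in \X}$, I would interchange the two to obtain $v_k^* = \min_{i \in [m],\, \ell \in [2]} \min_{\bm x \in \X}\{{\rxi^0}^\top \bm a_k(\bm x) + b_k(\bm x) \pm \theta a_{ki}(\bm x)\}$, which matches the stated $v_{ik1}^*$ and $v_{ik2}^*$ once the sign carried by each $g_j$ is tracked. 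Each inner problem again minimizes an affine objective over $\X$ and is tractable, and ranging over $k \in [K]$ yields $v^* = \min_{k \in [K],\, i \in [m],\, \ell \in [2]} v_{ik\ell}^*$, i.e., $2mK$ tractable convex programs.

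There is no deep obstacle here; the substantive content is the observation that a concave piecewise-affine perturbation of a linear objective can be globally minimized by enumerating its finitely many affine pieces, which is exactly what works for the $\ell_\infty$ norm (and, trivially, for a constant norm) but fails for $p \in (1,\infty)$, where the reverse norm is strictly concave and Lemma~\ref{micp_example} already shows non-representability. The only points requiring care are: first, correctly tracking the sign flip induced by $\theta \geq 0$, so that the two families $v_{ik1}^*$ and $v_{ik2}^*$ together cover both $+\theta a_{ki}$ and $-\theta a_{ki}$; and second, invoking Definition~\ref{def_tract} to certify that minimizing an affine objective over the compact convex set $\X$ is genuinely tractable, with the total count of subproblems ($K$ or $2mK$) independent of the target accuracy $\hat\varepsilon$.
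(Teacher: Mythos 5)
Your proposal is correct and follows essentially the same route as the paper's proof: part (i) by direct substitution of the constant $C_k$, and part (ii) by expanding $\|\bm a_k(\bm x)\|_\infty$ into its $2m$ affine pieces, using $\theta \geq 0$ to turn $-\theta\max$ into a minimum, and interchanging the finite minimum with $\min_{\bm x\in\X}$ to obtain the $2mK$ convex programs. Your added remarks on sign-tracking and on why the enumeration argument fails for $p\in(1,\infty)$ are consistent with the paper's surrounding discussion and introduce no gap.
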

\begin{proof}
We split the proof into two parts by checking these two conditions separately. 
\begin{itemize}
\item[(i)] When $ \|\bm a_k(\bm x) \|_{p^*}$ is a constant for each $k\in[K]$, i.e.,  $\|\bm a_k(\bm x) \|_{p^*}=C_k$ for $k\in[K]$, then the objective function of sDFO \eqref{robust_min_b} is linear and optimizing it is equivalent to solving $K$ convex programs;
\item[(ii)] When $p=1$, i.e., when the dual norm is $L_\infty$, 
then
$\theta \|\bm a_k(\bm x) \|_\infty =  \theta \max_{i\in [m]}\max\left\{ a_{ki}(\bm x),-a_{ki}(\bm x)\right\}$. That is, sDFO \eqref{robust_min_b} can be simplified as
\begin{align*}
\min_{k\in [K]}\min_{i\in [m]}\min\left\{ \min_{\bm x\in\X}\left\{{\rxi^0}^\top \bm a_k(\bm x) + b_k(\bm x)+ \theta a_{ki}(\bm x)\right\},\min_{\bm x\in\X}\left\{{\rxi^0}^\top \bm a_k(\bm x) + b_k(\bm x)-\theta a_{ki}(\bm x)\right\}\right\}, 
\end{align*}
which is equivalent to solving $2mK$ convex programs and selecting the best one with the lowest optimal value. \QEDA
\end{itemize}
\end{proof}

In the following complexity analysis, we focus on the non-trivial cases where $ \|\bm a_k(\bm x) \|_{p^*}$  is not a constant for some $k\in [K]$. Unfortunately, when $p\in(1,\infty]$, solving sDFO \eqref{robust_min_b}, in general, is NP-hard with the reduction to the well-known NP-hard problem --- maximizing a norm over a polytope. 

\begin{restatable}{proposition}{propnphardrobustminb}\label{prop_np_hard_robust_min_b} 
For any $p\in(1,\infty]$, solving sDFO \eqref{robust_min_b}, in general, is NP-hard even with $K=1$.
\end{restatable}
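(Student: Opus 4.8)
The plan is to show that sDFO \eqref{robust_min_b} with $K=1$ is already expressive enough to encode the maximization of a dual norm over a polytope, and then to prove that this norm‑maximization problem is NP‑hard for every dual exponent $p^*\in[1,\infty)$, i.e.\ for every $p\in(1,\infty]$.

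First I would strip \eqref{robust_min_b} down to its hard core. Taking $K=1$, choosing the affine data $\bm a_1(\bm x)=\bm x$ (so $\hat{\bm A}_1=\bm I$, $\hat{\bm a}_1=\bm 0$, and $m=n$), $b_1(\bm x)\equiv 0$, $\rxi^0=\bm 0$, and $\theta=1$, the program collapses to
\[
v^*=\min_{\bm x\in\X}\bigl\{-\|\bm x\|_{p^*}\bigr\}=-\max_{\bm x\in\X}\|\bm x\|_{p^*}.
\]
Thus solving sDFO in this regime is exactly maximizing the dual norm $\|\cdot\|_{p^*}$ over the polytope $\X$. Since $p\in(1,\infty]$ forces $p^*\in[1,\infty)$, it suffices to prove that $\ell_{p^*}$‑norm maximization over a polytope is NP‑hard for every finite $p^*\ge 1$.

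The substance is a reduction from the set partition problem (as in Proposition \ref{prop_eva_dfo}) that works uniformly across all $p^*$. Given nonnegative integers $w_1,\dots,w_N$, I would take
\[
\X=\Bigl\{\bm x\in\Re^N:\ -1\le x_i\le 1\ \forall i\in[N],\ \sum_{i\in[N]}w_i x_i=0\Bigr\},
\]
which is compact and, containing $\bm 0$ strictly inside the box on the hyperplane, has nonempty relative interior, so Assumption \ref{assum_0} holds. The key elementary observation is that for finite $p^*\ge 1$ and $\bm x\in\X$,
\[
\|\bm x\|_{p^*}^{p^*}=\sum_{i\in[N]}|x_i|^{p^*}\le N,
\]
with equality if and only if $|x_i|=1$ for every $i$, i.e.\ $\bm x\in\{-1,1\}^N$; combined with $\sum_i w_ix_i=0$, such a point lies in $\X$ exactly when $S=\{i:x_i=1\}$ satisfies $\sum_{i\in S}w_i=\sum_{i\notin S}w_i$. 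Hence $\max_{\bm x\in\X}\|\bm x\|_{p^*}=N^{1/p^*}$, i.e.\ $v^*=-N^{1/p^*}$, if and only if the partition instance is feasible, so an exact solver for sDFO \eqref{robust_min_b} would decide set partition and the problem is NP‑hard already at $K=1$ for every $p\in(1,\infty]$.

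The main obstacle I anticipate is turning the ``if and only if'' into a genuine reduction: when no partition exists, one must certify that $\max_{\bm x\in\X}\|\bm x\|_{p^*}$ is bounded away from $N^{1/p^*}$ by a non‑exponentially‑small amount, so that deciding $v^*=-N^{1/p^*}$ is well posed. I would obtain this from the vertex structure of $\X$: at any optimal vertex all but one coordinate are pinned at $\pm1$, while the free coordinate is a rational with denominator at most $\max_i w_i$, giving $\sum_i|x_i|^{p^*}\le N-1/\max_i w_i$ and hence a polynomially bounded gap below $N^{1/p^*}$. Alternatively, one may simply invoke the known NP‑hardness of $\ell_{p^*}$‑norm maximization over an H‑polytope for all rational $p^*\in[1,\infty)$ and cite it directly, which sidesteps the gap bookkeeping but hides the clean partition structure. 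The endpoint $p^*=1$ (i.e.\ $p=\infty$) is already implicit in the reduction of Proposition \ref{prop_eva_dfo}, which reassures us that the boundary case is genuinely hard and not an artifact.
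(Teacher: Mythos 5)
Your proposal is correct, and its first half coincides exactly with the paper's proof: the same specialization ($K=1$, $\bm a_1(\bm x)=\bm x$, $b_1(\bm x)=0$, $\rxi^0=\bm 0$, $\theta=1$, $\X$ a polytope) that collapses sDFO~\eqref{robust_min_b} to maximizing the dual norm $\|\cdot\|_{p^*}$ over a polytope. The divergence is in the second half: the paper stops there and simply cites the known NP-hardness of norm maximization over a polytope (theorem~1 of \cite{ge2011note}), whereas you re-prove that hardness from scratch by a set-partition reduction with $\X=\{\bm x\in[-1,1]^N\colon \sum_{i\in[N]}w_ix_i=0\}$, using that $\sum_i|x_i|^{p^*}\le N$ with equality iff $\bm x\in\{-1,1\}^N$. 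Your reduction is sound: the convex objective attains its maximum at a vertex of $\X$, a vertex has at most one coordinate off the box bounds (necessarily one with $w_j\ge 1$, taking a value $k/w_j$), and so when no partition exists every vertex satisfies $\sum_i|x_i|^{p^*}\le (N-1)+(1-1/w_j)^{p^*}\le N-1/\max_i w_i$, which separates the two cases. What your route buys is a self-contained argument, uniform in $p^*\in[1,\infty)$, that parallels the partition reduction already used in Proposition~\ref{prop_eva_dfo}; what the paper's citation buys is brevity and avoidance of the precision bookkeeping you correctly flag. One small imprecision in that bookkeeping: since the $w_i$ are binary-encoded, your gap $1/\max_i w_i$ can be exponentially small in the input length, so it is not ``polynomially bounded'' as stated; what is true, and what suffices, is that the gap has polynomial bit-length, which is compatible with the $\ln(1/\hat\varepsilon)$ dependence in the tractability notion of Definition~\ref{def_tract} (or one can phrase the target as the standard decision version, asking whether the maximum is at least $N^{1/p^*}$).
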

\begin{proof}
Let us consider an NP-hard problem --- Norm maximization over a polytope (see theorem 1 in \cite{ge2011note}), which asks
\begin{quote}\it
\textbf{Norm maximization over a polytope.} Given the polytope $\left\{\bm x:\bm D\bm x\leq \bm d\right\}$, where $\bm D \in \Re^{\tau\times n}$ and $\bm d\in\Re^\tau$, what is the optimal value of the problem $\max_{\bm x}\left\{ \|\bm x\|_{p^*}\colon \bm D\bm x\leq \bm d \right\}$ with $p\in(1,\infty]$?
\end{quote}
Consider a special case of sDFO \eqref{robust_min_b}, where $K=1$, $\bm a_1(\bm x)=\bm x$, $b_1(\bm x)=0$,
 $\bm \xi^0=\bm 0$, $\theta=1$, and set $\X=\{\bm x: \bm D\bm x\leq \bm d\}$. In this case, sDFO \eqref{robust_min_b} can be written as  
\begin{align*}
\max_{\bm x}\left\{ \|\bm x \|_{p^*}\colon \bm D\bm x\leq \bm d \right\},
\end{align*}
which is exactly the desirable norm maximization problem over a polytope for any $p\in(1,\infty]$. Thus, solving sDFO \eqref{robust_min_b}, in general, is NP-hard for any $p\in(1,\infty]$.  
\QEDA
\end{proof}

The complexity result suggests that the tractable result in Theorem~\ref{prop_robust_min_tractable} with $p=1$ is the best one that we could expect.

Next, for the intractable case, we study the MICP-R formulation of the objective function of sDFO  \eqref{robust_min_b}.
As an extension of Lemma~\ref{micp_example}, we notice that when $p=\infty$, the objective function of sDFO \eqref{robust_min_b} is MICP-R; otherwise, when $p\in(1,\infty)$, it is not. 
\begin{restatable}{theorem}{propminmicpr}\label{prop_min_micpr} 
When $p=\infty$, the objective function of sDFO \eqref{robust_min_b} with domain $\X$ is MICP-R; otherwise, when $p\in(1,\infty)$, the objective function of sDFO \eqref{robust_min_b} with domain $\X$ may not be MICP-R.
\end{restatable}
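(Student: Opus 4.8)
The plan is to prove the two halves separately, leveraging Lemma~\ref{micp_example} in both directions and noting that the dual-norm index in \eqref{robust_min_b} flips the roles of $1$ and $\infty$: the case $p=\infty$ corresponds to a reverse $L_1$ norm (since $p^*=1$), while $p\in(1,\infty)$ corresponds to a reverse $L_{p^*}$ norm with $p^*\in(1,\infty)$. Since the affine mappings $\bm a_k(\bm x)=\hat{\bm A}_k\bm x+\hat{\bm a}_k$ and $b_k(\bm x)$ preserve affinity, and the outer operator is a finite minimum over $k\in[K]$, the whole argument reduces to understanding the reverse norm term $-\theta\|\bm a_k(\bm x)\|_{p^*}$.

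For the positive direction ($p=\infty$, hence $p^*=1$), I would first binarize the reverse $L_1$ norm exactly as in Case~1 of Lemma~\ref{micp_example}: writing $\|\bm a_k(\bm x)\|_1=\max_{\bm z\in\{-1,1\}^m}\bm z^\top\bm a_k(\bm x)$, so that $-\theta\|\bm a_k(\bm x)\|_1=\min_{\bm z\in\{-1,1\}^m}\{-\theta\,\bm z^\top\bm a_k(\bm x)\}$. Substituting this into the $k$-th summand shows it equals $\min_{\bm z\in\{-1,1\}^m}\{{\rxi^0}^\top\bm a_k(\bm x)+b_k(\bm x)-\theta\,\bm z^\top\bm a_k(\bm x)\}$, a finite minimum of functions affine in $\bm x$. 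Consequently the full objective $g(\bm x)=\min_{k\in[K],\,\bm z\in\{-1,1\}^m}\{\text{affine in }\bm x\}$ is a minimum of $K2^m$ affine functions, and its epigraph over the convex domain $\X$ is the finite union $\bigcup_{k,\bm z}\{(\bm x,t)\colon\text{affine}_{k,\bm z}(\bm x)\le t,\ \bm x\in\X\}$ of convex sets. Since $\X$ is compact, disjunctive programming (or, equivalently, the McCormick linearization of Definition~\ref{mc_defintion} applied to each product $z_i a_{ki}(\bm x)$) yields an MICP formulation of this epigraph; together with the trivially MICP-R domain $\X$, this shows $g$ is MICP-R per Definition~\ref{def_micpr}(iii).

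For the negative direction ($p\in(1,\infty)$, hence $p^*\in(1,\infty)$), it suffices to exhibit one instance that is not MICP-R, which justifies the ``may not'' in the statement. I would take $K=1$, $\bm a_1(\bm x)=\bm x$ (i.e.\ $m=n$, $\hat{\bm A}_1=\bm I$, $\hat{\bm a}_1=\bm 0$), $b_1\equiv 0$, $\rxi^0=\bm 0$, and $\theta=1$. Then the objective collapses to $g(\bm x)=-\|\bm x\|_{p^*}$ with domain $\X$, i.e.\ precisely the reverse norm function $f(\bm x)=-\|\bm x\|_{p^*}+\chi_{\X}(\bm x)$ of Lemma~\ref{micp_example} with a non-integer dual index $p^*\in(1,\infty)$. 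By that lemma, $g$ is not MICP-R, the underlying mechanism being the strict convexity of $\|\cdot\|_{p^*}$ together with the infinite equinorm sequence in $\X$ furnished by Assumption~\ref{assum_0}, whose midpoints escape the epigraph (Lemma~\ref{lemma2_lubin}).

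Both directions are largely an exercise in reducing to Lemma~\ref{micp_example}, so no single step is genuinely hard; the only place demanding care is the positive direction, where one must verify that composing the reverse $L_1$ norm with the affine maps $\bm a_k(\cdot)$ and then taking the outer finite minimum over $k$ keeps the epigraph a finite union of convex sets. This is exactly where the affinity of $\bm a_k$ and $b_k$ is essential---it guarantees each $z_i a_{ki}(\bm x)$ is a genuine bilinear term amenable to the McCormick/disjunctive treatment rather than an arbitrary nonconvexity that would break the argument.
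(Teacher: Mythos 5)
Your proof is correct and takes essentially the same route as the paper: for $p=\infty$ you binarize the reverse $L_1$ norm as a maximum over $\bm z\in\{-1,1\}^m$ and handle the bilinear terms $z_i a_{ki}(\bm x)$ by McCormick/disjunctive programming using compactness of $\X$, and for $p\in(1,\infty)$ you reduce to Lemma~\ref{micp_example} via the instance $K=1$, $\bm a_1(\bm x)=\bm x$, $b_1\equiv 0$, $\rxi^0=\bm 0$, exactly as the paper does. The only cosmetic difference is that the paper proves each $f_k$ MICP-R separately (matching the $K$ programs in \eqref{robust_min_b}) whereas you disjoin jointly over $(k,\bm z)$ into $K2^m$ affine pieces, which changes nothing of substance.
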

\begin{proof}
We first rewrite the objective function of sDFO \eqref{robust_min_b} with domain $\X$ as $f_k(\bm{x})={\rxi^0}^\top \bm a_k(\bm x) + b_k(\bm x)-\|\bm a_k(\bm x)\|_{p^*}+\chi_{\X}(\bm{x})$ for each $k\in[K]$. We then focus on the MICP-R formulation of the epigraph of the function $f_k(\cdot)$, which reads as
\begin{align}
\epi(f_k)=\left\{(\bm x,t)\colon{\rxi^0}^\top \bm a_k(\bm x) + b_k(\bm x)-\|\bm a_k(\bm x) \|_{p^*}\leq t, \bm x\in\X\right\}. \label{robust_min_micp}
\end{align}
Next, we split the proof into two cases based on the choice of $p$. 


\noindent\textbf{Case 1: When $p=\infty$, i.e., the dual norm is $L_1$,} set $\epi(f_k)$ can be written as
\begin{align*}
\epi(f_k)=\left\{(\bm x,t)\colon \sum_{i\in[m]}\left|a_{ki}(\bm x)\right|-{\rxi^0}^\top \bm a_k(\bm x) - b_k(\bm x)\geq -t,\bm x\in\X \right\}.
\end{align*}
Then we have
\begin{align*}
\epi(f_k)=\left\{(\bm x,t)\colon \max_{\bm{z}\in\{-1,1\}^m}\sum_{i\in[m]}a_{ki}(\bm x)z_i-{\rxi^0}^\top \bm a_k(\bm x) - b_k(\bm x)\geq -t,\bm x\in\X \right\},
\end{align*}
which is equivalent to
\begin{align*}
\epi(f_k)=\left\{(\bm x,t)\colon \sum_{i\in[m]}a_{ki}(\bm x)z_i-{\rxi^0}^\top \bm a_k(\bm x) - b_k(\bm x)\geq -t,\bm x\in\X,\bm{z}\in \{-1,1\}^m \right\}.
\end{align*}
Since set $\X$ is compact, we can apply the McCormick inequalities (see Definition~\ref{mc_defintion}) to linearize the bilinear terms $\{a_{ki}(\bm x)z_i\}_{i\in[m]}$. Thus, $ \epi(f_k)$ is MICP-R for each $k\in[K]$.

\noindent\textbf{Case 2: When $p\in(1,\infty)$, i.e., when the dual norm is neither $L_1$ nor $L_\infty$,} suppose that $K=1$, $\rxi^0=\bm 0$, $\bm a_1(\bm x)=\bm x$, and $b_1(\bm x)=0$, then set \eqref{robust_min_micp} reduces to 
\begin{align*}
\epi(f_1)=\left\{(\bm x,t)\colon-\|\bm x \|_{p^*}\leq t, \bm x\in\X\right\},
\end{align*}
which is identical to \eqref{epigraph_set_micp_lemma}. According to the result in Lemma~\ref{micp_example}, when $p\in(1,\infty)$, the objective function of sDFO \eqref{robust_min_b} with domain $\X$, in general, may not be MICP-R.
\QEDA
\end{proof}

Theorem~\ref{prop_min_micpr} suggests that the objective function of sDFO  \eqref{robust_min_b} with domain $\X$ may not be MICP-R with a general norm, but it is MICP-R when the norm is $L_{\infty}$. As a direct corollary of Theorem~\ref{prop_min_micpr}, when $p=\infty$, the MICP-R formulation of sDFO  \eqref{robust_min_b} can be summarized as follows. 
\begin{corollary}
When $p=\infty$, suppose that $\X\subseteq [\bm{l},\bm{u}]$ and let $\hat{l}_{ki}=\sum_{j\in [n]}\min\{\hat{A}_{kij} l_j,\hat{A}_{kij} u_j\}+\hat{a}_{ki}$ and $\hat{u}_{ki}=\sum_{j\in [n]}\max\{\hat{A}_{kij} l_j,\hat{A}_{kij} u_j\}+\hat{a}_{ki}$ for each $i\in [m]$ such that $\bm{a}_k(\bm x)\in [\hat{\bm{l}}_k,\hat{\bm{u}}_k]$ for each $k\in[K]$. Then, sDFO \eqref{robust_min_b} is equivalent to solving the following $K$ MICPs, i.e.,
$v^*=\min_{k\in [K]}v_k^*$, where for each $k\in [K]$, we have
\begin{align*}
v_k^* = \min_{\bm x\in\X} \left\{{\rxi^0}^\top \bm a_k(\bm x) + b_k(\bm x) - \theta \sum_{i\in[m]}s_{ki}: \left(s_{ki},z_{ki},a_{ki}(\bm x)\right)\in \mathcal{MI} \left(-1,1,\hat{l}_{ki},\hat{u}_{ki}\right),\forall i\in[m]\right\}.
\end{align*}
\end{corollary}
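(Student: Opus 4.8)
The plan is to reduce the corollary to the $p=\infty$ case (equivalently, dual norm $L_1$) already handled in Case~1 of the proof of Theorem~\ref{prop_min_micpr}, and then to make the McCormick linearization explicit with valid variable bounds. Since $p=\infty$ has dual exponent $p^*=1$, each inner subproblem of sDFO~\eqref{robust_min_b} reads
\[
v_k^*=\min_{\bm x\in\X}\Big\{{\rxi^0}^\top \bm a_k(\bm x)+b_k(\bm x)-\theta\sum_{i\in[m]}|a_{ki}(\bm x)|\Big\},
\]
so the only nonconvex piece is $-\theta\sum_{i\in[m]}|a_{ki}(\bm x)|$, and it suffices to represent this term through integer variables.

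First I would use the identity $|a_{ki}(\bm x)|=\max_{z\in\{-1,1\}}z\,a_{ki}(\bm x)$. Because $\theta\geq 0$, this yields
\[
-\theta\sum_{i\in[m]}|a_{ki}(\bm x)|=\min_{\bm z\in\{-1,1\}^m}\Big\{-\theta\sum_{i\in[m]}z_i\,a_{ki}(\bm x)\Big\},
\]
and merging this minimization over $\bm z$ with the outer minimization over $\bm x$ turns $v_k^*$ into a joint minimization over $(\bm x,\bm z)\in\X\times\{-1,1\}^m$ whose objective involves only the bilinear products $s_{ki}:=z_{ki}a_{ki}(\bm x)$.

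Next I would linearize each product via the McCormick representation of Definition~\ref{mc_defintion}. To supply the box bounds it requires, I would invoke $\X\subseteq[\bm l,\bm u]$ and apply interval arithmetic to the affine map $a_{ki}(\bm x)=\sum_{j\in[n]}\hat A_{kij}x_j+\hat a_{ki}$: minimizing and maximizing each summand separately over $x_j\in[l_j,u_j]$ gives exactly $\hat l_{ki}=\sum_{j\in[n]}\min\{\hat A_{kij}l_j,\hat A_{kij}u_j\}+\hat a_{ki}$ and $\hat u_{ki}=\sum_{j\in[n]}\max\{\hat A_{kij}l_j,\hat A_{kij}u_j\}+\hat a_{ki}$, so that $a_{ki}(\bm x)\in[\hat l_{ki},\hat u_{ki}]$ for every $\bm x\in\X$. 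Replacing $z_{ki}a_{ki}(\bm x)$ by $s_{ki}$ subject to $(s_{ki},z_{ki},a_{ki}(\bm x))\in\mathcal{MI}(-1,1,\hat l_{ki},\hat u_{ki})$ then produces exactly the stated MICP for $v_k^*$, and $v^*=\min_{k\in[K]}v_k^*$ recovers sDFO~\eqref{robust_min_b}.

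The step needing the most care is confirming that this reformulation is lossless. Exactness of the McCormick set is what makes it work: since the factor $z_{ki}$ takes only its two extreme values $\{-1,1\}$, on $\{-1,1\}\times[\hat l_{ki},\hat u_{ki}]$ the four McCormick inequalities pin $s_{ki}$ to precisely $z_{ki}a_{ki}(\bm x)$, with no relaxation gap. Combined with $\theta\geq 0$, this means that for each fixed $\bm x$ the inner minimization over $\bm z$ selects $z_{ki}=\sign(a_{ki}(\bm x))$ and returns $\sum_{i\in[m]}s_{ki}=\sum_{i\in[m]}|a_{ki}(\bm x)|=\|\bm a_k(\bm x)\|_1$; projecting out $(\bm z,\bm s)$ therefore restores the objective of $v_k^*$ pointwise in $\bm x$, establishing the equivalence. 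Everything else is the interval-arithmetic bookkeeping for $\hat l_{ki}$ and $\hat u_{ki}$.
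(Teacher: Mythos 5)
Your proposal is correct and takes essentially the same route as the paper: the corollary there is obtained directly from Case~1 of the proof of Theorem~\ref{prop_min_micpr}, which likewise rewrites the dual $L_1$ norm as $\max_{\bm z\in\{-1,1\}^m}\sum_{i\in[m]}a_{ki}(\bm x)z_i$, absorbs the binary variables into the minimization, and linearizes the bilinear products via the McCormick set $\mathcal{MI}(-1,1,\hat{l}_{ki},\hat{u}_{ki})$ with box bounds induced by $\X\subseteq[\bm l,\bm u]$. Your explicit checks that the McCormick inequalities are exact when $z_{ki}\in\{-1,1\}$ and that the interval-arithmetic bounds $\hat{l}_{ki},\hat{u}_{ki}$ are valid simply make precise what the paper leaves implicit.
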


\subsection{sDFO \eqref{eq_uq_simple} with Convex Piecewise Affine Functions}
\label{eq_uq_simple_cases_convex}

In this subsection, we follow the same notation and uncertainty set as the previous subsection (Section~\ref{eq_uq_simple_cases_concave}) and consider the maximum of piecewise affine function, that is, the function $Q(\bm x, \rxi)$ is defined as the maximum of $K$ piecewise affine function $\rxi^\top \bm a_k(\bm x)+b_k(\bm x)$, i.e., $Q(\bm x, \rxi)=\max_{k\in[K]}[\rxi^\top \bm a_k(\bm x)+b_k(\bm x)]$.  
In this setting, sDFO \eqref{eq_uq_simple} can be recast as
\begin{align}
v^* = \min_{\bm x\in\X}\min_{\rxi\in\U} \max_{k\in[K]}  \left\{\rxi^\top \bm a_k(\bm x)+b_k(\bm x)\right\}. \label{robust_max_b}
\end{align}
Let us first provide an equivalent reformulation of sDFO \eqref{robust_max_b}, which helps establish the tractability and MICP formulation of sDFO \eqref{robust_max_b}. 
\begin{restatable}{lemma}{robustmaxrefor}\label{robust_max_refor} 
sDFO \eqref{robust_max_b} is equivalent to 
\begin{align}
v^*=\min_{\bm x\in\X}\max_{\bm\lambda\geq  \bm 0}\left\{\sum_{k\in[K]} \lambda_k \left[{\rxi^0}^\top \bm a_k(\bm x) +b_k(\bm x) \right] -  \theta \left\|\sum_{k\in[K]} \lambda_k \bm a_k(\bm x) \right\|_{p^*}  \colon  \sum_{k\in[K]} \lambda_k =1 \right\}. \label{robust_max_eq}
\end{align}
\end{restatable}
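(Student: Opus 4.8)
The plan is to peel off the three nested optimizations one layer at a time and reorganize them so that the inner $\min_{\rxi}$ can be evaluated in closed form via the dual norm. I fix $\bm x\in\X$ throughout and work only with the inner two operators $\min_{\rxi\in\U}\max_{k\in[K]}$; the outer $\min_{\bm x\in\X}$ is inert and is reattached at the end.

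First I would replace the discrete maximum over $k$ by a maximum over the probability simplex $\Delta:=\{\bm\lambda\geq\bm 0:\sum_{k\in[K]}\lambda_k=1\}$. Since a linear objective over $\Delta$ attains its optimum at a vertex, we have $\max_{k\in[K]}\{c_k\}=\max_{\bm\lambda\in\Delta}\sum_{k\in[K]}\lambda_kc_k$, so that
\begin{align*}
\max_{k\in[K]}\left\{\rxi^\top\bm a_k(\bm x)+b_k(\bm x)\right\}=\max_{\bm\lambda\in\Delta}\sum_{k\in[K]}\lambda_k\left[\rxi^\top\bm a_k(\bm x)+b_k(\bm x)\right].
\end{align*}

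Next comes the key step: swapping $\min_{\rxi\in\U}$ and $\max_{\bm\lambda\in\Delta}$. The bifunction $g(\rxi,\bm\lambda)=\sum_{k\in[K]}\lambda_k[\rxi^\top\bm a_k(\bm x)+b_k(\bm x)]$ is bilinear, hence affine (in particular convex) in $\rxi$ and affine (in particular concave) in $\bm\lambda$; moreover $\U$ is a compact convex ball and $\Delta$ is compact and convex. Sion's minimax theorem therefore licenses the exchange $\min_{\rxi\in\U}\max_{\bm\lambda\in\Delta}g=\max_{\bm\lambda\in\Delta}\min_{\rxi\in\U}g$. I expect this interchange to be the only delicate point; the compactness of both feasible sets together with bilinearity is exactly what is needed to invoke the theorem without any further constraint qualification.

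Finally, for fixed $\bm\lambda\in\Delta$ the inner minimization over $\rxi$ is linear over the ball. Writing $\bm d:=\sum_{k\in[K]}\lambda_k\bm a_k(\bm x)$ and separating the constant term $\sum_{k\in[K]}\lambda_k b_k(\bm x)$, the substitution $\bm u=\rxi-\rxi^0$ together with the definition of the dual norm yields $\min_{\|\bm u\|_p\leq\theta}\bm u^\top\bm d=-\theta\|\bm d\|_{p^*}$, whence
\begin{align*}
\min_{\rxi\in\U}g(\rxi,\bm\lambda)=\sum_{k\in[K]}\lambda_k\left[{\rxi^0}^\top\bm a_k(\bm x)+b_k(\bm x)\right]-\theta\left\|\sum_{k\in[K]}\lambda_k\bm a_k(\bm x)\right\|_{p^*}.
\end{align*}
Reattaching the outer $\min_{\bm x\in\X}$ reproduces exactly \eqref{robust_max_eq}, completing the argument.
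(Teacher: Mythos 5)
Your proposal is correct and follows essentially the same route as the paper's proof: lifting the discrete maximum over $k\in[K]$ to a maximization over the probability simplex, interchanging the minimum over $\rxi$ and the maximum over $\bm\lambda$ via Sion's minimax theorem, and then evaluating the inner linear minimization over the ball in closed form using the definition of the dual norm. Your added remarks on why Sion's theorem applies (bilinearity of the objective together with compactness and convexity of $\U$ and the simplex) only make explicit what the paper leaves implicit.
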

\begin{proof}
Let us first consider the inner minimax of sDFO \eqref{robust_max_b} as
\begin{align*}
\min_{\rxi} \left\{\max_{k\in[K]}  \rxi^\top \bm a_k(\bm x)+b_k(\bm x)\colon \|\bm \xi-\bm \xi^0\|_p\leq \theta\right\}.
\end{align*}
Introducing auxiliary nonnegative variables $\bm\lambda$, the inner minimax of sDFO \eqref{robust_max_b} is equivalent to
\begin{align*}
\min_{\rxi}\left\{\max_{\bm\lambda\geq  \bm 0}\left\{\sum_{k\in[K]} \lambda_k \left[{\rxi}^\top \bm a_k(\bm x)+b_k(\bm x)\right]\colon  \sum_{k\in[K]} \lambda_k =1\right\}: \|\bm \xi-\bm \xi^0\|_p\leq \theta\right\}.
\end{align*} 
According to Sion's minimax theorem \cite{sion1958general}, we can interchange the maximum operator with the minimum one as
\begin{align*}
\max_{\bm\lambda\geq  \bm 0}\left\{\min_{\rxi}\left\{\sum_{k\in[K]} \lambda_k \left[{\rxi}^\top \bm a_k(\bm x)+b_k(\bm x)\right]\colon \|\bm \xi-\bm \xi^0\|_p\leq \theta  \right\}:\sum_{k\in[K]} \lambda_k =1\right\}.
\end{align*}
Invoking the definition of dual norm, we have 
\begin{align*}
\max_{\bm\lambda\geq  \bm 0}\left\{\sum_{k\in[K]} \lambda_k \left[{\rxi^0}^\top \bm a_k(\bm x) +b_k(\bm x) \right] -  \theta \left\|\sum_{k\in[K]} \lambda_k \bm a_k(\bm x) \right\|_{p^*}  \colon  \sum_{k\in[K]} \lambda_k =1 \right\}.
\end{align*}
This completes the proof.
\QEDA
\end{proof}

Due to the bilinear terms in the reformulation \eqref{robust_max_eq}, sDFO \eqref{robust_max_b}, in general, can be difficult to solve. However, by exploring the objective function and the properties of the dual norm, we are able to prove conditions under which sDFO \eqref{robust_max_b} can be tractable.
\begin{restatable}{theorem}{proprobustmaxtractable}\label{prop_robust_max_tractable} 
sDFO \eqref{robust_max_b} can be tractable if any of the following conditions holds:
\begin{itemize}
\item[(i)] When $ \bm a_k(\bm x):=\bar{\bm a}_k$ is constant for all $k\in[K]$ and $\bm{x}\in \X$,  sDFO \eqref{robust_max_b} is equivalent to
\begin{align*}
v^*=\min_{\bm x\in\X,\rxi,\eta}  \left\{\eta\colon\eta\geq \rxi^\top \bar{\bm a}_k+b_k(\bm x), \forall k\in[K], \|\rxi-\rxi^0\|_p\leq \theta\right\};
\end{align*}
\item[(ii)] When $p=1$ and  $a_k(\bm x)=a_1(\bm x)$ for each $k\in[K]$, sDFO  \eqref{robust_max_b} is equivalent to solving $2m$ tractable convex programs, and selecting the lowest optimal value, i.e., $v^*=\min_{i\in[m],\ell\in [2]}v_{i\ell}^*$, where for each $i\in[m]$, we have
\begin{align*}
& v_{i1}^*=\min_{\bm x\in\X,\eta}  \left\{\eta\colon \eta\geq  b_k(\bm x)+ {\rxi^0}^\top \bm a_1(\bm x)- \theta a_{1i}(\bm x),\forall k\in[K] \right\},\\
& v_{i2}^*=\min_{\bm x\in\X,\eta}  \left\{\eta\colon \eta\geq  b_k(\bm x)+ {\rxi^0}^\top \bm a_1(\bm x) + \theta a_{1i}(\bm x),\forall k\in[K] \right\};
\end{align*}
\item[(iii)] Suppose that $p=1$,  and $\rxi:=[\rxi_1,\ldots,\rxi_K]$ such that $\rxi^i$ and $\rxi^j$ do not overlap for each $i\neq j$, and $\rxi^\top \bm a_k(\bm x)= {\rxi^k}^\top \bar{\bm a}_{k}(\bm x) $ for each $k\in[K]$ such that $\|\bar{\bm a}_k(\bm x) \|_{\infty}=C_k$ is constant for each $k\in[K]$, where $\rxi_k\in \Re^{m_k}$, $\bar{\bm a}_k(\bm x)=\bar{\bm{A}}_k \bm{x}+\bar{\bm a}_k\in \Re^{m_k}$ with $\bar{\bm{A}}_k\in \Re^{m_k\times n},\bar{\bm a}_k\in \Re^{m_k}$ such that $\bar{\bm{A}}_i$ and $\bar{\bm{A}}_j$, $\bar{\bm{a}}_i$ and $\bar{\bm{a}}_j$ do not overlap each $i\neq j$ with $\sum_{k\in[K]}m_k=m$ and each $m_k$ is nonnegative. 
Then,  sDFO \eqref{robust_max_b} is equivalent to solving
\begin{align*}
v^*=\min_{\bm x\in\X,\beta,\bm\gamma\geq \bm 0} \left\{\beta\colon \sum_{k\in[K]} \gamma_{k}= \theta,\beta \geq  {\rxi_k^0}^\top \bar{\bm a}_{k}(\bm x)+b_k(\bm x)-\gamma_{k}C_k,\forall k\in[K]  \right\}.
\end{align*}
\end{itemize}
\end{restatable}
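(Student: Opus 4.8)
The plan is to start from the minimax reformulation \eqref{robust_max_eq} of Lemma~\ref{robust_max_refor} and to show that under each hypothesis the only nonconvex ingredient---the concave term $-\theta\|\sum_{k\in[K]}\lambda_k\bm a_k(\bm x)\|_{p^*}$ coupled with the outer maximization over $\bm\lambda$---either disappears or collapses into a tractable structure. Throughout I use that $b_k(\bm x)$ and each coordinate $a_{ki}(\bm x)$ are affine in $\bm x$, so the residual problems are genuinely convex, and tractability then follows from Definition~\ref{def_tract}.

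For part (i), if $\bm a_k(\bm x)\equiv\bar{\bm a}_k$ is constant, each affine piece $\rxi^\top\bar{\bm a}_k+b_k(\bm x)$ is jointly affine in $(\bm x,\rxi)$, so $Q(\bm x,\rxi)=\max_{k\in[K]}[\rxi^\top\bar{\bm a}_k+b_k(\bm x)]$ is jointly convex. Hence $\min_{\bm x\in\X,\rxi\in\U}Q(\bm x,\rxi)$ is a single convex program over a convex compact feasible set, and introducing $\eta\geq\rxi^\top\bar{\bm a}_k+b_k(\bm x)$ for all $k$ yields exactly the stated epigraph formulation.

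For part (ii), I would substitute $\bm a_k(\bm x)=\bm a_1(\bm x)$ into \eqref{robust_max_eq}. Since $\sum_{k\in[K]}\lambda_k=1$, the norm term reduces to the $\bm\lambda$-free quantity $\theta\|\bm a_1(\bm x)\|_{p^*}$ and ${\rxi^0}^\top\bm a_k(\bm x)$ becomes the $\bm\lambda$-free quantity ${\rxi^0}^\top\bm a_1(\bm x)$, leaving only $\max_{\bm\lambda}\sum_{k\in[K]}\lambda_k b_k(\bm x)=\max_{k\in[K]}b_k(\bm x)$. With $p=1$, hence $p^*=\infty$, I rewrite $-\theta\|\bm a_1(\bm x)\|_\infty=\min_{i\in[m]}\min\{-\theta a_{1i}(\bm x),\theta a_{1i}(\bm x)\}$ and distribute the outer minimization over the $2m$ sign-and-index choices; interchanging the minimization over this finite index set with the minimization over $\bm x\in\X$ produces the $2m$ convex programs $v_{i\ell}^*$, each convex because $\max_{k\in[K]}b_k(\bm x)$ is convex and the remaining terms are affine.

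Part (iii) is where the main difficulty lies. The key structural observation is that the non-overlap hypothesis makes the vectors $\{\bm a_k(\bm x)\}_{k\in[K]}$ supported on disjoint coordinate blocks, so in \eqref{robust_max_eq} the dual norm decouples block by block: with $p^*=\infty$ and $\lambda_k\geq0$,
\[\left\|\sum_{k\in[K]}\lambda_k\bm a_k(\bm x)\right\|_\infty=\max_{k\in[K]}\lambda_k\|\bar{\bm a}_k(\bm x)\|_\infty=\max_{k\in[K]}\lambda_k C_k,\]
where the last step uses the constancy $\|\bar{\bm a}_k(\bm x)\|_\infty=C_k$. This constancy is exactly what eliminates the bilinear coupling between $\bm\lambda$ and $\bm x$. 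Introducing $t\geq\lambda_k C_k$ for all $k$ turns the inner maximization over $\bm\lambda$ into a linear program in $(\bm\lambda,t)$ for each fixed $\bm x$; I would then dualize it, pairing a free multiplier $\beta$ with $\sum_{k\in[K]}\lambda_k=1$ and nonnegative multipliers $\gamma_k$ with $t\geq\lambda_k C_k$. Requiring the Lagrangian to be bounded above in the free variable $t$ forces $\sum_{k\in[K]}\gamma_k=\theta$, boundedness in each $\lambda_k\geq0$ forces $\beta\geq{\rxi_k^0}^\top\bar{\bm a}_k(\bm x)+b_k(\bm x)-\gamma_k C_k$, and the dual objective is $\beta$. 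Since the inner LP is feasible and bounded, strong LP duality lets me replace the inner $\max$ by this $\min$, and absorbing the outer $\min_{\bm x\in\X}$ gives the claimed program, which is convex because every constraint is jointly affine in $(\bm x,\beta,\bm\gamma)$. The obstacle to watch is verifying that the disjoint-block structure genuinely produces $\max_{k\in[K]}\lambda_k C_k$ rather than a coupled norm, and that the LP dual is read off with the correct signs; once these are secured, convexity and tractability are immediate.
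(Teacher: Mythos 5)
Your proposal is correct and follows essentially the same route as the paper's proof: part (i) via the direct epigraph reformulation of the jointly convex problem, part (ii) by observing that the dual-norm term in \eqref{robust_max_eq} is $\bm\lambda$-free and reducing the $L_\infty$ dual norm to the $2m$ sign--coordinate choices, and part (iii) via the block-decoupling identity $\left\|\sum_{k\in[K]}\lambda_k\bm a_k(\bm x)\right\|_\infty=\max_{k\in[K]}\lambda_k C_k$ followed by linearization with an auxiliary variable and strong LP duality, with matching multipliers $\beta$ and $\bm\gamma$. The only cosmetic difference is that in part (ii) the paper dualizes the inner simplex maximization to obtain epigraph constraints indexed by $k$, whereas you evaluate $\max_{\bm\lambda}\sum_{k\in[K]}\lambda_k b_k(\bm x)=\max_{k\in[K]}b_k(\bm x)$ directly---an equivalent step.
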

\begin{proof}
We split the proof into three parts accordingly.
\begin{itemize}
\item[(i)] When $\bm a_k(\bm x)=\bar{\bm a}_k$ is constant for all $k\in[K]$, sDFO \eqref{robust_max_b} can be written as
\begin{align*}
v^*=\min_{\bm x\in\X} \min_{\rxi} \max_{k\in[K]}  \left\{\rxi^\top \bar{\bm a}_k+b_k(\bm x)\colon \|\rxi-\rxi^0\|_p\leq \theta\right\}.
\end{align*}
Introducing an auxiliary variable $\eta$ to linearize the inner maximum, we arrive at Part (i).
\item[(ii)] When $\bm a_k(\bm x)=\bm a_1(\bm x)$  for each $k\in[K]$, we rewrite sDFO  \eqref{robust_max_eq} as
\begin{align*}
v^*=\min_{\bm x\in\X} \max_{\bm\lambda\geq  \bm 0} \left\{\sum_{k\in[K]} \lambda_k b_k(\bm x)+ {\rxi^0}^\top \bm a_1(\bm x) - \theta \| \bm a_1(\bm x) \|_{p^*}\colon  \sum_{k\in[K]} \lambda_k =1  \right\}.
\end{align*}
Taking the dual of the inner maximization problem and using strong duality from linear programming, we have 
\begin{align*}
v^*=\min_{\bm x\in\X,\eta}  \left\{\eta\colon \eta\geq  b_k(\bm x)+ {\rxi^0}^\top \bm a_1(\bm x)- \theta \| \bm a_1(\bm x) \|_{p^*}, \forall k\in[K] \right\}.
\end{align*}
When $p=1$, i.e., when the dual norm is $L_\infty$, 
then $\theta \|\bm a_1(\bm x) \|_\infty =  \theta \max_{i\in [m]}\max\{ a_{1i}(\bm x),-a_{1i}(\bm x)\}$. Therefore, sDFO \eqref{robust_max_eq} is equivalent to solving $2m$ tractable problems, and selecting the best one with the lowest optimal value, i.e., $v^*=\min_{i\in[m],\ell\in [2]}v_{i\ell}^*$, where for each $i\in[m]$, we have
\begin{align*}
& v_{i1}^*=\min_{\bm x\in\X,\eta}  \left\{\eta\colon \eta\geq  b_k(\bm x)+ {\rxi^0}^\top \bm a_1(\bm x)- \theta a_{1i}(\bm x),\forall k\in[K] \right\},\\
& v_{i2}^*=\min_{\bm x\in\X,\eta}  \left\{\eta\colon \eta\geq  b_k(\bm x)+ {\rxi^0}^\top \bm a_1(\bm x) + \theta a_{1i}(\bm x),\forall k\in[K] \right\}.
\end{align*}
\item[(iii)] Since $\rxi^i$ and $\rxi^j$ do not overlap for each $i\neq j$ and $\bar{\bm a}_{j}(\bm x) $ and $\bar{\bm a}_{j}(\bm x) $ do not overlap for each $i\neq j$ as well, when $p=1$ and $\|\bar{\bm a}_k(\bm x) \|_{\infty}=C_k$ is constant for each $k\in[K]$, the dual norm term in \eqref{robust_max_eq} can be simplified as
\begin{align*}
\left\|\sum_{k\in[K]} \lambda_k \bm a_k(\bm x) \right\|_{\infty} = \max_{k\in[K]} \lambda_k\left\|\bar{\bm a}_k(\bm x) \right\|_{\infty}=\max_{k\in[K]} \lambda_k C_k.
\end{align*}
Then, sDFO \eqref{robust_max_eq} can be written as
\begin{align*}
 v^*=  \min_{\bm x\in\X}\max_{\bm\lambda\geq  \bm 0}\left\{\sum_{k\in[K]} \lambda_k \left[{\rxi_k^0}^\top \bar{\bm a}_{k}(\bm x) \right] + \sum_{k\in[K]} \lambda_k  b_k(\bm x) -  \theta \max_{k\in[K]} \lambda_kC_k \colon  \sum_{k\in[K]} \lambda_k =1 \right\}.
\end{align*}
Introducing one variable $\eta$ to linearize the term $\max_{k\in[K]} \lambda_kC_k$, then we have
\begin{align*}
\max_{\bm\lambda\geq  \bm 0,\eta}\left\{\sum_{k\in[K]} \lambda_k \left[{\rxi_k^0}^\top \bar{\bm a}_{k}(\bm x) \right] + \sum_{k\in[K]} \lambda_k  b_k(\bm x) -  \theta \eta  \colon  \sum_{k\in[K]} \lambda_k =1,   \lambda_kC_k-\eta\leq 0, \forall k\in[K] \right\}. 
\end{align*}
Taking the dual of the inner maximization problem with dual variables $\beta,\bm\gamma$ and using strong duality from linear programming, we have 
\begin{align*}
\min_{\beta,\bm\gamma\geq \bm 0} \left\{\beta\colon \sum_{k\in[K]} \gamma_{k}= \theta,\beta \geq  {\rxi_k^0}^\top \bar{\bm a}_{k}(\bm x)+b_k(\bm x)-\gamma_{k}C_k,\forall k\in[K]  \right\}.
\end{align*}
This completes the proof.\QEDA
\end{itemize}
\end{proof}

The results in Theorem~\ref{prop_robust_max_tractable} may be the best ones that we could expect. 
In general, solving sDFO \eqref{robust_max_b} is NP-hard for any convex $L_p$ norm.
\begin{restatable}{proposition}{robustmaxbhard}\label{robust_max_b_hard} 
For any $p\in[1,\infty]$, solving sDFO \eqref{robust_max_b},  in general, is NP-hard.
\end{restatable}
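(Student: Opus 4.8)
The plan is to split the argument according to the value of $p$ and, in each regime, to reduce from an NP-hard problem by specializing the data $(\bm a_k(\bm x), b_k(\bm x), \X, \rxi^0, \theta)$ of sDFO \eqref{robust_max_b}. The range $p\in(1,\infty]$ I would dispose of by the cheapest possible specialization, namely $K=1$. With a single affine piece the outer maximum $\max_{k\in[K]}$ is vacuous, so \eqref{robust_max_b} collapses to $v^*=\min_{\bm x\in\X}\min_{\rxi\in\U}\{\rxi^\top\bm a_1(\bm x)+b_1(\bm x)\}$, and carrying out the inner minimization through the dual norm turns this into $\min_{\bm x\in\X}\{{\rxi^0}^\top\bm a_1(\bm x)+b_1(\bm x)-\theta\|\bm a_1(\bm x)\|_{p^*}\}$, i.e.\ exactly sDFO \eqref{robust_min_b} with $K=1$. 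Proposition~\ref{prop_np_hard_robust_min_b} already certifies that this problem is NP-hard for every $p\in(1,\infty]$ (shown there via norm maximization over a polytope), so the claim for $p\in(1,\infty]$ follows with no further work.

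The genuinely new regime is $p=1$, where the $K=1$ shortcut is unavailable: for $p=1$ the dual norm is $\|\cdot\|_\infty$, the single-piece instance reduces to minimizing a linear function minus $\theta\|\bm a_1(\bm x)\|_\infty$ over $\X$, which is tractable (it falls under Theorem~\ref{prop_robust_max_tractable}(ii)), and more generally Theorem~\ref{prop_robust_max_tractable} shows $p=1$ instances become easy whenever the $\bm a_k(\bm x)$ are constant, coincide, or act on disjoint blocks of $\rxi$. Any reduction must therefore use $K\ge 2$ pieces whose affine maps $\bm a_k(\bm x)$ genuinely depend on $\bm x$, differ across $k$, and overlap in the $\rxi$-coordinates, so that none of the tractable cases of Theorem~\ref{prop_robust_max_tractable} applies. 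I would reduce from the set partition problem used in Proposition~\ref{prop_eva_dfo}: given nonnegative integers $w_1,\dots,w_N$, take $\X$ to be the balance polytope $\{\bm x\in[-1,1]^N:\sum_{i\in[N]}w_ix_i=0\}$ (compact, with nonempty relative interior since $\bm x=\bm 0$ lies in its relative interior), place $\rxi$ in the cross-polytope $\{\|\rxi\|_1\le\theta\}$, and engineer the pieces through the minimax reformulation \eqref{robust_max_eq} of Lemma~\ref{robust_max_refor} so that, after the inner minimization over $\rxi$, the value function is a concave function of $\bm x$ whose minimum over $\X$ hits a prescribed threshold if and only if some $\bm x\in\{-1,1\}^N$ satisfies $\sum_i w_ix_i=0$, i.e.\ if and only if a balanced partition exists. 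The verification then consists of two inclusions: exhibiting the target value at every integral balanced point, and showing that no fractional $\bm x\in\X$ can beat it.

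The main obstacle is precisely this $p=1$ construction together with its value analysis. The difficulty is structural: because the uncertainty set is the $\ell_1$ ball, its support function is the tractable $\ell_\infty$ norm, and a single affine piece can only produce a $\max$-type (hence polynomially optimizable) penalty in $\bm x$; to force an NP-hard, $\ell_1$-like concave penalty one must make the $K\ge 2$ affine pieces interact through the shared $\ell_1$ budget, and controlling the outcome of $\min_{\rxi}\max_{k}$ over the cross-polytope---where the minimizer need not be a vertex---is the delicate step. Establishing both directions of the threshold characterization (in particular ruling out fractional minimizers) while keeping $m$ and $K$ polynomial in $N$ is where essentially all of the effort of the $p=1$ case resides; once it is in place, combining it with the $K=1$ argument yields NP-hardness across the full range $p\in[1,\infty]$.
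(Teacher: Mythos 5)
Your treatment of $p\in(1,\infty]$ coincides exactly with the paper's: set $K=1$, note that \eqref{robust_max_b} collapses to \eqref{robust_min_b}, and invoke Proposition~\ref{prop_np_hard_robust_min_b}. That half is complete and needs no further comment.

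The $p=1$ half, however, is a plan rather than a proof, and the missing piece is precisely where the content of the proposition lies. You never exhibit the instance: no pieces $\bm a_k(\bm x)$, $b_k(\bm x)$, no choice of $K$ and $m$, no threshold value, and you explicitly defer ``essentially all of the effort'' --- the two-sided threshold characterization and, in particular, the exclusion of fractional minimizers --- without carrying it out. Your structural worry there is well founded: over the cross-polytope the minimizer of $\min_{\rxi}\max_{k}$ need not be a vertex, so a set-partition reduction whose YES-instances are certified by a nonzero target value (such as $-N$) genuinely requires a delicate value analysis that the proposal does not supply. The paper sidesteps this entirely by reducing from a different problem, feasibility of a binary program $\{\bm x\in\{-1,1\}^K\colon \bm D\bm x\le \bm d\}$: it takes $\X=\left\{\bm x\colon \bm D\bm x\le\bm d,\ \bm x\in[-1,1]^K\right\}$, $\rxi^0=\bm 0$, $\theta=1$, and pieces realizing $Q(\bm x,\rxi)=\max_{k\in[K]}\max\left\{\xi^k_k x_k-1,\,-\xi^k_k x_k+1\right\}=\max_{k\in[K]}\left|\xi^k_k x_k-1\right|$, so that the optimal value of the resulting problem \eqref{infty_norm_special_case} satisfies $v^*=0$ if and only if $\xi^k_k x_k=1$ for every $k\in[K]$; since $|\xi^k_k|\le 1$ and $|x_k|\le 1$, this forces $x_k\in\{-1,1\}$ for all $k$, i.e.\ $v^*=0$ exactly when the binary program is feasible. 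Because the target value $0$ is an exact feasibility certificate attained coordinatewise, both directions of the equivalence are immediate and no fractional-minimizer analysis is needed --- the very step your set-partition route leaves open. As written, your proposal therefore establishes only the range $p\in(1,\infty]$; to close the gap you would need either to carry out your deferred construction in full or to switch to a certificate-at-zero gadget of the paper's type.
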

\begin{proof}
Note that when $K=1$, the sDFO  \eqref{robust_max_b} is equivalent to formulation \eqref{robust_min_b}. Thus, the complexity results in Proposition~\ref{prop_np_hard_robust_min_b} hold, i.e., solving the sDFO  \eqref{robust_max_b} is, in general, NP-hard for $p\in(1,\infty]$. 

It remains to show that solving the sDFO  \eqref{robust_max_b} is also NP-hard when $p=1$. Let us consider the NP-complete problem --- feasibility problem of a general binary program, which asks
\begin{quote}\it  
\textbf{Feasibility of a binary program.} Given an integer matrix $\bm D \in \Ze^{\tau\times K}$, and integer vector $\bm d\in\Ze^\tau$, is there a vector $\bm x\in\{-1,1\}^K$ such that $\bm D\bm x\leq \bm d$? 
\end{quote}
Let us consider the following special case of the sDFO  \eqref{robust_max_b}. We first suppose $\theta=1$ and $\rxi^0=\bm 0$, then the uncertainty set becomes
\begin{align*}
\U = \left\{\rxi\in \Re^K\colon \|\rxi\|_1\leq 1\right\}.
\end{align*}
Next, let us consider the following function
\begin{align*}
Q(\bm x, \rxi)= \max_{k\in [K]}\max \left\{{\xi_k^k} x_k-1,-{\xi_k^k}x_k+1 \right\},
\end{align*}
and the  set $\X=\{\bm x: \bm D\bm x\leq \bm d, -1\leq x_k \leq 1,\forall k\in[K]\}$. Under this special setting, the sDFO  \eqref{robust_max_b} reduces to 
\begin{align}
v^* = \min_{\bm{x},\bm{\xi} }\left\{ \max_{k\in[K]} \left\{ |{\xi_k^k} x_k -1 |\right\}: \bm D\bm x\leq \bm d, 
\bm x\in[-1,1]^K,
\rxi^k\in[-1,1]^K,\forall k\in [K]\right\}.  \label{infty_norm_special_case}
\end{align}
We observe that the optimal value  $v^*=0$ in \eqref{infty_norm_special_case} if and only if ${\xi_k^k} x_k=1$ for all $k\in [K]$, i.e., if and only if there exists a binary feasible solution $\bm x\in\{-1,1\}^K$ such that  $\bm D\bm x\leq \bm d$. Thus, solving problem \eqref{infty_norm_special_case} is NP-hard, so is the sDFO  \eqref{robust_max_b}.
\QEDA
\end{proof} 

Proposition \ref{robust_max_b_hard} motivates us to investigate the MICP-R formulation of the objective function of sDFO \eqref{robust_max_b} with domain $\X$. Unfortunately, in most cases,
the objective function of sDFO \eqref{robust_max_b}  with domain $\X$ may not be MICP-R. 
\begin{restatable}{proposition}{propmaxmicpr}\label{prop_max_micpr} 
For any $p\in(1,\infty)$,  the objective function of sDFO \eqref{robust_max_b} with domain $\X$ may not be MICP-R. 
\end{restatable}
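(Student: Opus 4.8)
The plan is to reduce to the reverse norm function already analyzed in Lemma~\ref{micp_example} and Theorem~\ref{prop_min_micpr}, since the conclusion ``may not be MICP-R'' requires only a single instance in the family for which the MICP-R property fails. The crucial observation is that when $K=1$, the outer operator $\max_{k\in[K]}$ acts on a single affine-in-$\rxi$ function, so the objective function of sDFO \eqref{robust_max_b} with domain $\X$ coincides pointwise with that of sDFO \eqref{robust_min_b}; this $K=1$ equivalence was already exploited in the proof of Proposition~\ref{robust_max_b_hard}.

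First I would specialize the data to $K=1$, $\rxi^0=\bm 0$, $\bm a_1(\bm x)=\bm x$, $b_1(\bm x)=0$, and $\theta=1$. Substituting into the reformulation \eqref{robust_max_eq}, whose single multiplier $\lambda_1$ is forced to equal $1$, the objective function with domain $\X$ becomes $f(\bm x)=-\|\bm x\|_{p^*}+\chi_{\X}(\bm x)$, whose epigraph is exactly the set \eqref{epigraph_set_micp_lemma}.

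Next I would invoke the conjugacy correspondence: for $p\in(1,\infty)$ the dual exponent $p^*=p/(p-1)$ likewise lies in $(1,\infty)$, so the norm $\|\cdot\|_{p^*}$ is strictly convex. Case~2 of Lemma~\ref{micp_example} then applies verbatim: using the nonempty relative interior of $\X$ guaranteed by Assumption~\ref{assum_0}, one selects an infinite sequence of distinct points lying on a common level set of $\|\cdot\|_{p^*}$, and strict convexity together with Lemma~\ref{lemma2_lubin} certifies that the epigraph is not MICP-R. Hence, for this choice of data, the objective function of sDFO \eqref{robust_max_b} with domain $\X$ is not MICP-R, which proves the proposition.

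The main obstacle is entirely a matter of bookkeeping rather than new ideas: I must confirm that the single-piece reduction yields the reverse norm epigraph at the level of functions (not merely of optimal values), and that the conjugate-exponent map keeps the governing norm in the strictly convex regime $p^*\in(1,\infty)$. Both are immediate from the definitions and the already-established lemmas, so I expect no genuine difficulty beyond careful referencing.
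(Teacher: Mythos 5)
Your proposal is correct and takes essentially the same approach as the paper: the paper's proof likewise reduces to $K=1$, where sDFO \eqref{robust_max_b} coincides with sDFO \eqref{robust_min_b}, and then invokes Case 2 of Theorem~\ref{prop_min_micpr} (which itself specializes to $\rxi^0=\bm 0$, $\bm a_1(\bm x)=\bm x$, $b_1(\bm x)=0$ and applies Lemma~\ref{micp_example} via Lemma~\ref{lemma2_lubin}). Your extra bookkeeping---forcing $\lambda_1=1$ in \eqref{robust_max_eq} and verifying that $p^*\in(1,\infty)$ keeps the dual norm strictly convex---is sound and merely makes explicit what the paper leaves implicit.
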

\begin{proof}
Note that when $K=1$, sDFO \eqref{robust_max_b} is equivalent to sDFO \eqref{robust_min_b}. Therefore, according to the result in Theorem~\ref{prop_min_micpr}, when $p\in(1,\infty)$, the objective function of sDFO \eqref{robust_max_b} with domain $\X$  may not be MICP-R. 
\QEDA
\end{proof}

When $p\notin (1,\infty)$, 
in Theorem~\ref{prop_robust_max_tractable}, we show that when $p=1$, there exist special cases such that sDFO \eqref{robust_max_b} can be tractable. Next, we show special cases under which the objective function of sDFO \eqref{robust_max_b} with domain $\X$ can be MICP-R when $p=\infty$.

\begin{restatable}{theorem}{propmaxmicprgood}\label{prop_max_micpr_good} 
When $p=\infty$, the objective function of sDFO \eqref{robust_max_b} with domain $\X$ is MICP-R if one of the following conditions holds:
\begin{itemize}
\item[(i)] When  $\bm a_k(\bm x)=\bm a_1(\bm x)$ for each $k\in[K]$; or
\item[(ii)]  When $\rxi:=[\rxi_1,\ldots,\rxi_K]$ such that $\rxi_i$ and $\rxi_j$ do not overlap for each $i\neq j$, and $\rxi^\top \bm a_k(\bm x)= {\rxi_k}^\top \bar{\bm a}_{k}(\bm x) $ for each $k\in[K]$, where $\rxi_k\in \Re^{m_k}$, $\bar{\bm a}_k(\bm x)=\bar{\bm{A}}_k \bm{x}+\bar{\bm a}_k\in \Re^{m_k}$ with $\bar{\bm{A}}_k\in \Re^{m_k\times n},\bar{\bm a}_k\in \Re^{m_k}$ such that $\bar{\bm{A}}_i$ and $\bar{\bm{A}}_j$, $\bar{\bm{a}}_i$ and $\bar{\bm{a}}_j$ do not overlap each $i\neq j$ with $\sum_{k\in[K]}m_k=m$.
\end{itemize}
\end{restatable}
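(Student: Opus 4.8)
The plan is to reduce the objective function, in both cases, to a pointwise maximum of functions each having the form ``affine map minus the $L_1$ norm of an affine map,'' and then to invoke the $p=\infty$ (i.e., $L_1$ dual norm) branch of Theorem~\ref{prop_min_micpr} together with the fact that a finite intersection of MICP-R sets is again MICP-R. Throughout I use that $p=\infty$ means $p^*=1$, so the dual-norm term in the reformulation \eqref{robust_max_eq} is $\theta\|\sum_{k\in[K]}\lambda_k\bm a_k(\bm x)\|_1$.

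For condition (i), since $\bm a_k(\bm x)=\bm a_1(\bm x)$ for all $k$, the reduction carried out in the proof of Theorem~\ref{prop_robust_max_tractable}(ii) applies, and the objective function with domain $\X$ equals $g(\bm x)={\rxi^0}^\top\bm a_1(\bm x)+\max_{k\in[K]}b_k(\bm x)-\theta\|\bm a_1(\bm x)\|_1+\chi_{\X}(\bm x)$. First I would write its epigraph $\{(\bm x,t)\colon g(\bm x)\leq t\}$ and move the concave term to the right-hand side, using $\|\bm a_1(\bm x)\|_1=\max_{\bm z\in\{-1,1\}^m}\sum_{i\in[m]}a_{1i}(\bm x)z_i$. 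Because the $\bm a_1$-term does not depend on $k$, a single existential binary vector $\bm z$ serves all $K$ constraints: $(\bm x,t)\in\epi(g)$ if and only if there exist $\bm z\in\{-1,1\}^m$ and $\bm s$ with $b_k(\bm x)-t+{\rxi^0}^\top\bm a_1(\bm x)\leq \theta\sum_{i\in[m]}s_i$ for all $k\in[K]$, with $(s_i,z_i,a_{1i}(\bm x))\in\mathcal{MI}(-1,1,\hat{l}_{1i},\hat{u}_{1i})$ for all $i\in[m]$, and $\bm x\in\X$. This is an MICP formulation, exactly as in Case 1 of Theorem~\ref{prop_min_micpr}.

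For condition (ii), the key step is decoupling the $L_1$ norm along the non-overlapping blocks. Since $\bm a_k(\bm x)$ has support only in the $k$-th block, where it equals $\bar{\bm a}_k(\bm x)$, and $\lambda_k\geq 0$, I would show $\|\sum_{k\in[K]}\lambda_k\bm a_k(\bm x)\|_1=\sum_{k\in[K]}\lambda_k\|\bar{\bm a}_k(\bm x)\|_1$. Substituting this into \eqref{robust_max_eq} makes the inner objective linear in $\bm\lambda$, so the maximum over the simplex $\{\bm\lambda\geq\bm 0\colon\sum_{k\in[K]}\lambda_k=1\}$ is attained at a vertex and collapses to $\max_{k\in[K]}h_k(\bm x)$, where $h_k(\bm x)={\rxi_k^0}^\top\bar{\bm a}_k(\bm x)+b_k(\bm x)-\theta\|\bar{\bm a}_k(\bm x)\|_1$. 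Thus the objective with domain $\X$ is $\max_{k\in[K]}h_k(\bm x)+\chi_{\X}(\bm x)$, whose epigraph is the intersection $\bigcap_{k\in[K]}\{(\bm x,t)\colon h_k(\bm x)\leq t,\ \bm x\in\X\}$. Each set in this intersection is precisely the epigraph handled in Case 1 of Theorem~\ref{prop_min_micpr} (an affine map minus the $L_1$ norm of an affine map over $\X$), hence MICP-R via its own binary vector $\bm z_k\in\{-1,1\}^{m_k}$ and McCormick inequalities; I would then conclude by noting that stacking the auxiliary variables shows a finite intersection of MICP-R sets is MICP-R, since the corresponding lifted closed convex sets intersect to a closed convex set.

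The main obstacle is the block decoupling in (ii): verifying that non-overlapping supports turn $\|\sum_{k\in[K]}\lambda_k\bm a_k(\bm x)\|_1$ into $\sum_{k\in[K]}\lambda_k\|\bar{\bm a}_k(\bm x)\|_1$ and thereby linearize the inner problem in $\bm\lambda$ is exactly what collapses the simplex maximization to a finite maximum, and it is this collapse that distinguishes the present setting from the tractable constant-norm case of Theorem~\ref{prop_robust_max_tractable}(iii). Once this reduction is in place, the remaining steps—the McCormick linearization of the $L_1$ term and the closure of MICP-R sets under finite intersection—are routine and mirror the arguments already used in Lemma~\ref{micp_example} and Theorem~\ref{prop_min_micpr}.
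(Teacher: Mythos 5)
Your proposal is correct and follows essentially the same route as the paper's proof: both reduce each case to constraints of the form ``affine minus $\theta\|\cdot\|_1$'' via the reformulation \eqref{robust_max_eq}---including the same block-decoupling identity $\|\sum_{k}\lambda_k\bm a_k(\bm x)\|_1=\sum_{k}\lambda_k\|\bar{\bm a}_k(\bm x)\|_1$ in case (ii)---and then invoke the $L_1$ dual-norm case of Theorem~\ref{prop_min_micpr} with McCormick linearization. The only cosmetic difference is that where you collapse the simplex maximization by vertex attainment of a linear objective, the paper takes the LP dual of the inner maximization and applies strong duality, yielding the identical epigraph system $\eta\geq h_k(\bm x)$ for all $k\in[K]$.
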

\begin{proof}
We split the proof into two parts accordingly.
\begin{itemize}
\item[(i)] When $\bm a_k(\bm x)=\bm a_1(\bm x)$ for each $k\in[K]$, we can rewrite sDFO  \eqref{robust_max_eq} as 
\begin{align*}
v^*=\min_{\bm x\in\X} \max_{\bm\lambda\geq  \bm 0} \left\{\sum_{k\in[K]} \lambda_k b_k(\bm x)+ {\rxi^0}^\top \bm a_1(\bm x) - \theta \| \bm a_1(\bm x) \|_{1}\colon  \sum_{k\in[K]} \lambda_k =1  \right\}.
\end{align*}
Taking the dual of the inner maximization problem and using strong duality from linear programming, we have
\begin{align}
v^* = \min_{\bm x\in\X,\eta} \left\{\eta\colon \eta\geq {\rxi^0}^\top \bm a_1(\bm x) + b_k(\bm x)- \theta \| \bm a_1(\bm x) \|_{1}, \forall k\in[K] \right\}. \label{robust_max_micp_special_case_1}
\end{align}
According to Part (i) in Theorem~\ref{prop_min_micpr}, sDFO   \eqref{robust_max_micp_special_case_1} is MICP-R.

\item[(ii)] When $p=\infty$ and
$\rxi^\top \bm a_k(\bm x)= {\rxi_k}^\top \bar{\bm a}_{k}(\bm x) $ for each $k\in [K]$, we can rewrite sDFO \eqref{robust_max_eq} as 
\begin{align*}
  v^*=  \min_{\bm x\in\X}\max_{\bm\lambda\geq  \bm 0}\left\{\sum_{k\in[K]} \lambda_k \left[{\rxi_k^0}^\top \bar{\bm a}_{k}(\bm x) \right] + \sum_{k\in[K]} \lambda_k  b_k(\bm x) -  \theta \sum_{k\in[K]} \lambda_k\left\| \bar{\bm a}_{k}(\bm x) \right\|_{1}  \colon  \sum_{k\in[K]} \lambda_k =1 \right\},
\end{align*}
which can be simplified as 
\begin{align*}
v^*=   \min_{\bm x\in\X}\max_{\bm\lambda\geq  \bm 0}\left\{\sum_{k\in[K]} \lambda_k \left[{\rxi_k^0}^\top \bar{\bm a}_{k}(\bm x) \right] + \sum_{k\in[K]} \lambda_k b_k(\bm x) -  \theta  \sum_{k\in[K]} \lambda_k \left\| \bar{\bm a}_{k}(\bm x) \right\|_{1} \colon  \sum_{k\in[K]} \lambda_k =1 \right\}.
\end{align*}
Taking the dual of the inner maximization problem and using strong duality from linear programming, we have 
\begin{align}
v^*=   \min_{\bm x\in\X,\eta}\left\{\eta\colon \eta\geq  \left[{\rxi_k^0}^\top \bar{\bm a}_{k}(\bm x) \right] + b_k(\bm x) -  \theta \left\| \bar{\bm a}_{k}(\bm x) \right\|_{1}, \forall k\in[K] \right\}. \label{robust_max_micp_special_case_2}
\end{align}
According to Part (i) in  Theorem~\ref{prop_min_micpr}, sDFO  \eqref{robust_max_micp_special_case_2} is MICP-R.\QEDA
\end{itemize}
\end{proof}


The following Corollary~\ref{coro_max_micpr_good} shows the MICP-R formulations of the two cases discussed in Theorem~\ref{prop_max_micpr_good}.

\begin{corollary}
\label{coro_max_micpr_good}
When $p=\infty$, suppose that $\X\subseteq [\bm{l},\bm{u}]$.
\begin{itemize}
\item[(i)] If  $\bm a_k(\bm x)=\bm a_1(\bm x)$ for each $k\in[K]$, sDFO \eqref{robust_max_b} can be reformulated as the following MICP
\begin{align*}
v^* = \min_{\bm x\in\X,\eta} \quad & \eta, \\
\textup{s.t.} \quad & \eta\geq {\rxi^0}^\top \bm a_1(\bm x) + b_k(\bm x)-  \theta \sum_{i\in[m]}s_{1i}, \forall k\in[K],\\
& \left(s_{1i},z_{1i},a_{1i}(\bm x)\right) \in \mathcal{MI} \left(-1,1,\hat{l}_{1i},\hat{u}_{1i}\right),\forall i\in[m],
\end{align*}
where we let $\hat{l}_{ki}=\sum_{j\in [n]}\min\{\hat{A}_{kij} l_j,\hat{A}_{kij} u_j\}+\hat{a}_{ki}$ and $\hat{u}_{ki}=\sum_{j\in [n]}\max\{\hat{A}_{kij} l_j,\hat{A}_{kij} u_j\}+\hat{a}_{ki}$ for each $i\in [m]$ such that $\bm{a}_k(\bm x)\in [\hat{\bm{l}}_k,\hat{\bm{u}}_k]$ for each $k\in[K]$; and
\item[(ii)]  Suppose $\rxi:=[\rxi_1,\ldots,\rxi_K]$ such that $\rxi_i$ and $\rxi_j$ do not overlap for each $i\neq j$, and $\rxi^\top \bm a_k(\bm x)= \rxi_k^\top \bar{\bm a}_{k}(\bm x) $ for each $k\in[K]$, where $\rxi_k\in \Re^{m_k}$, $\bar{\bm a}_k(\bm x)=\bar{\bm{A}}_k \bm{x}+\bar{\bm a}_k\in \Re^{m_k}$ with $\bar{\bm{A}}_k\in \Re^{m_k\times n},\bar{\bm a}_k\in \Re^{m_k}$ such that $\bar{\bm{A}}_i$ and $\bar{\bm{A}}_j$, $\bar{\bm{a}}_i$ and $\bar{\bm{a}}_j$ do not overlap each $i\neq j$ with $\sum_{k\in[K]}m_k=m$ and each $m_k$ is nonnegative. Then, sDFO \eqref{robust_max_b} can be reformulated as the following MICP
\begin{align*}
v^* = \min_{\bm x\in\X,\eta} \quad & \eta,\\
\textup{s.t.} \quad 
& \eta\geq {\rxi^0}^\top \bm a_k(\bm x) + b_k(\bm x) - \theta \sum_{i\in[m]}s_{ki}, \forall k\in[K],\\
& \left(s_{ki},z_{ki},\bar{a}_{ki}(\bm x)\right) \in \mathcal{MI} \left(-1,1,\bar{l}_{ki},\bar{u}_{ki}\right),\forall k\in[K],i\in[m],
\end{align*}
where we let $\bar{l}_{ki}=\sum_{j\in [n]}\min\{\bar{A}_{kij} l_j,\bar{A}_{kij} u_j\}+\bar{a}_{ki}$ and $\bar{u}_{ki}=\sum_{j\in [n]}\max\{\bar{A}_{kij} l_j,\bar{A}_{kij} u_j\}+\bar{a}_{ki}$ for each $i\in [m]$ such that $\bar{\bm{a}}_k(\bm x)\in [\bar{\bm{l}}_k,\bar{\bm{u}}_k]$ for each $k\in[K]$. 
\end{itemize}
\end{corollary}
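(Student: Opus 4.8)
The plan is to derive both formulations directly from the reformulations already obtained in the proof of Theorem~\ref{prop_max_micpr_good}, namely \eqref{robust_max_micp_special_case_1} for Part~(i) and \eqref{robust_max_micp_special_case_2} for Part~(ii), by making the linearization of the $L_1$ dual-norm term explicit through the McCormick representation of Definition~\ref{mc_defintion}, exactly as in Case~1 of the proof of Theorem~\ref{prop_min_micpr}. The only genuinely new ingredient is to record the valid interval bounds on the affine mappings $\bm a_k(\bm x)$ and $\bar{\bm a}_k(\bm x)$ over the box $\X\subseteq[\bm l,\bm u]$; everything else is bookkeeping.

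For Part~(i), I would begin from \eqref{robust_max_micp_special_case_1} and rewrite the dual norm as $\|\bm a_1(\bm x)\|_1=\max_{\bm z\in\{-1,1\}^m}\sum_{i\in[m]}a_{1i}(\bm x)z_i$, so that, since $\theta\geq0$, the term $-\theta\|\bm a_1(\bm x)\|_1=\min_{\bm z\in\{-1,1\}^m}\bigl(-\theta\sum_{i\in[m]}a_{1i}(\bm x)z_i\bigr)$. Because the constraint reads $\eta\geq(\cdot)-\theta\|\bm a_1(\bm x)\|_1$ and the norm term is independent of $k$, the requirement that $\eta$ dominate this minimum over $\bm z$ is equivalent to the existence of a single binary vector (relabeled $z_{1i}$) making $\eta\geq{\rxi^0}^\top\bm a_1(\bm x)+b_k(\bm x)-\theta\sum_{i\in[m]}a_{1i}(\bm x)z_{1i}$ hold for all $k\in[K]$. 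I would then linearize each bilinear product $a_{1i}(\bm x)z_{1i}$ by an auxiliary variable $s_{1i}$ together with $(s_{1i},z_{1i},a_{1i}(\bm x))\in\mathcal{MI}(-1,1,\hat l_{1i},\hat u_{1i})$, where $\hat l_{1i},\hat u_{1i}$ are the stated bounds obtained by propagating $\X\subseteq[\bm l,\bm u]$ through $a_{1i}(\bm x)=\sum_j\hat A_{1ij}x_j+\hat a_{1i}$. Since minimizing $\eta$ forces $\bm z$ to maximize $\sum_{i}a_{1i}(\bm x)z_{1i}$, the selected $s_{1i}$ reproduce $\|\bm a_1(\bm x)\|_1$ exactly, yielding the claimed MICP.

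Part~(ii) follows the same template but starting from \eqref{robust_max_micp_special_case_2}, where each index $k$ now carries its own block norm $\|\bar{\bm a}_k(\bm x)\|_1$ on the non-overlapping coordinates $\rxi_k$. I would linearize each block separately, introducing per-$k$ binary vectors $z_{ki}$ and McCormick variables $s_{ki}$ with $(s_{ki},z_{ki},\bar a_{ki}(\bm x))\in\mathcal{MI}(-1,1,\bar l_{ki},\bar u_{ki})$, and identify ${\rxi^0}^\top\bm a_k(\bm x)={\rxi_k^0}^\top\bar{\bm a}_k(\bm x)$ from the block structure. The bounds $\bar l_{ki},\bar u_{ki}$ are again read off by propagating the box through $\bar a_{ki}(\bm x)=\sum_j\bar A_{kij}x_j+\bar a_{ki}$.

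The point requiring care—rather than a true obstacle—is verifying exactness of the substitution: one must check that minimizing $\eta$ drives each binary variable to $z_{1i}=\sign(a_{1i}(\bm x))$ (respectively $z_{ki}=\sign(\bar a_{ki}(\bm x))$), so that $\sum_i s_{\cdot i}$ attains the full $L_1$ norm, and that $\mathcal{MI}$ is tight, i.e., forces $s=a z$, whenever $z\in\{-1,1\}$ and $a$ lies within its declared interval. In Part~(i) one additionally confirms that a single shared $\bm z$ suffices because the norm term does not depend on $k$; in Part~(ii) no sharing is needed since the norms act on disjoint coordinate blocks.
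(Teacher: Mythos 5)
Your proposal is correct and follows essentially the same route as the paper: the paper states this corollary without a separate proof, obtaining it exactly as you do---from the dualized reformulations \eqref{robust_max_micp_special_case_1} and \eqref{robust_max_micp_special_case_2} in the proof of Theorem~\ref{prop_max_micpr_good}, combined with the McCormick linearization of the $L_1$ dual norm from Case~1 of Theorem~\ref{prop_min_micpr}, with the interval bounds $\hat{l}_{ki},\hat{u}_{ki}$ (resp.\ $\bar{l}_{ki},\bar{u}_{ki}$) propagated through the affine maps over $\X\subseteq[\bm l,\bm u]$. Your explicit verification that a single shared binary vector suffices in Part~(i) (since the norm term is independent of $k$) and that the McCormick set is tight for $z\in\{-1,1\}$ makes precise what the paper leaves implicit, but it is the same argument.
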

We remark that Corollary~\ref{coro_max_micpr_good} extends the implications outlined in Theorem~\ref{prop_max_micpr_good} by providing the MICP-R formulations of special cases of sDFO \eqref{robust_max_b} when $p=\infty$. Given that sDFO \eqref{eq_uq_simple} represents the special case of DFO \eqref{dfo}, the results derived in this section can directly contribute to the discussions of DFO \eqref{dfo} in the next section.

\section{DFO with the Ambiguity Set}
\label{sec_ambiguity_set}

Although DFO \eqref{dfo} is generally known to be NP-hard, in this section, we present sufficient conditions under which DFO \eqref{dfo} with different types of ambiguity sets can be tractable or MICP-R. We specifically investigate two representative ambiguity sets-- the type-$\infty$ Wasserstein ambiguity set and the ambiguity set with finite support.


\subsection{DFO \eqref{dfo} with Type-$\infty$ Wasserstein Ambiguity Set}
\label{DFO_Wasserstein_MICPR}
Our first goal is to expand upon the results in Section~\ref{sec_piecewise_affine}, particularly those related to tractability, complexity, and MICP-R formulations. We aim to apply and adapt these insights specifically to DFO \eqref{dfo}.
In particular, we focus on type-$\infty$ Wasserstein ambiguity set, which is defined as $\P_\infty^W =\{ \Pr\colon\Pr\{\trxi\in {\U}\}=1,W_\infty(\Pr,\Pr_{\trzeta})\leq \theta \}$, where 
$\Pr_{\trzeta}$ is a discrete empirical reference distribution of random parameters $\trzeta$ generated by $N$ i.i.d. samples such that $\Pr_{\trzeta}\{\trzeta=\rzeta^i\}=1/N$, i.e.,  $\Pr_{\trzeta} = 1/N\sum_{i\in[N]} \delta_{\rzeta^i}$ and $\delta_{\rzeta^i}$ is the Dirac function that places unit mass on the realization $\trzeta=\rzeta^i$ for each $i\in[N]$, and $\theta\geq0$ is the Wasserstein radius.
The type-$\infty$ Wasserstein distance between two probability distributions $\Pr_1,\Pr_2$ is defined as
\begin{equation*}
W_\infty(\Pr_1,\Pr_2)=\inf\left\{ {\rm{ess.sup}}_{\Qe}\left\|\rxi^1-\rxi^2\right\|_p\colon
\begin{aligned}
& \Qe \text{ is a joint distribution of } \trxi^1 \text{ and } \trxi^2\\
& \text{ with marginals }\Pr_1 \text{ and } \Pr_2, \text{ respectively }
\end{aligned}
\right\}.
\end{equation*}
In this setting, DFO \eqref{dfo} admits the following representation (see, e.g., \cite{bertsimas2023data,xie2020tractable}):
\begin{align}
v^*=\min_{\bm x\in\X} \left\{\inf_{\Pr\in\P_\infty^W}\E_\Pr\left[Q(\bm x, \trxi)\right] \right\}=\min_{\bm x\in\X}\left\{\frac{1}{N}\sum_{i\in[N]}\left[\inf_{\rxi}\left\{Q(\bm x, \rxi)\colon \|\rxi-\rzeta^i\|_p\leq \theta\right\}\right]\right\}. \label{infty_was}
\end{align}
Similar to the discussions in Section~\ref{eq_uq_simple_cases_concave} and Section~\ref{eq_uq_simple_cases_convex}, we then consider function $Q(\bm x,\rxi)$ to be convex and concave piecewise affine, respectively.

\noindent\textbf{Special Case I. Concave Piecewise Function.} We first consider the concave piecewise affine function. Following the same notation as Section~\ref{eq_uq_simple_cases_concave}, i.e., function $Q(\bm x, \rxi)$ is the minimum of piecewise affine functions, DFO \eqref{infty_was} can be written as
\begin{align}
v^*= \min_{\bm x\in\X}\frac{1}{N}\sum_{i\in[N]}\left[\min_{k\in[K]}{\rzeta^i}^\top \bm a_k(\bm x)+b_k(\bm x) - \theta\left\| \bm a_k(\bm x)\right\|_{p^*} \right]. \label{infty_was_concave} 
\end{align}
We notice that if there is only one sample available in the empirical distribution $\Pr_{\trzeta}$ (i.e., $N=1$), then DFO \eqref{infty_was_concave} reduces to sDFO \eqref{robust_min_b}. Thus, according to Theorem~\ref{prop_robust_min_tractable}, if $p\in(1,\infty]$, solving DFO \eqref{infty_was_concave} is, in general, NP-hard. It turns out that even with $p=1$, solving DFO \eqref{infty_was_concave} is also NP-hard.
\begin{restatable}{proposition}{propinftywasconcave}\label{prop_infty_was_concave} 
For any $p\in[1,\infty]$, solving DFO \eqref{infty_was_concave} is, in general, NP-hard.
\end{restatable}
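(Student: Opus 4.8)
The plan is to split along the two parameter regimes, recycling the single-sample hardness for $p\in(1,\infty]$ and supplying a genuinely new reduction for $p=1$. For $p\in(1,\infty]$ I would simply specialize to one sample, $N=1$. Then DFO~\eqref{infty_was_concave} collapses to $\min_{\bm x\in\X}\min_{k\in[K]}[{\rzeta^1}^\top\bm a_k(\bm x)+b_k(\bm x)-\theta\|\bm a_k(\bm x)\|_{p^*}]$, which is exactly sDFO~\eqref{robust_min_b} with $\rxi^0=\rzeta^1$, so Proposition~\ref{prop_np_hard_robust_min_b} yields NP-hardness with no further work.

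The substance is the case $p=1$ (dual norm $\ell_\infty$), where a single sample is tractable by Theorem~\ref{prop_robust_min_tractable}(ii); the hardness must therefore come from the interaction between the averaging $\frac1N\sum_i$ and the min-over-$k$ structure. I would reduce from set partition on nonnegative integers $w_1,\dots,w_N$. Take $m=N$, use the $N$ samples $\rzeta^i=\bm e_i$, and use $K=2$ pieces with $\bm a_1(\bm x)=\bm x$, $\bm a_2(\bm x)=-\bm x$, and $b_1=b_2=0$, so that $Q(\bm x,\rxi)=\min\{\rxi^\top\bm x,-\rxi^\top\bm x\}$ is concave piecewise affine as required and both pieces carry the same norm term $\theta\|\bm x\|_\infty$. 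Choosing $\X=\{\bm x\in\Re^N:\bm x\in[-1,1]^N,\ \sum_j w_j x_j=0\}$ (compact with nonempty relative interior, so Assumption~\ref{assum_0} holds), the $i$th summand becomes $\min\{x_i,-x_i\}-\theta\|\bm x\|_\infty=-|x_i|-\theta\|\bm x\|_\infty$, and the full objective reduces to
\[
v^*=\min_{\bm x\in\X}\Bigl\{-\tfrac1N\|\bm x\|_1-\theta\|\bm x\|_\infty\Bigr\}.
\]

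The key step is the equivalence $v^*=-1-\theta$ iff the partition instance is feasible. Since $\X\subseteq[-1,1]^N$ forces $\|\bm x\|_1\le N$ and $\|\bm x\|_\infty\le 1$, we always have $v^*\ge-1-\theta$; equality requires $\|\bm x\|_1=N$, which on the box is attainable only when $|x_j|=1$ for every $j$, i.e. $\bm x\in\{-1,1\}^N$, and the constraint $\sum_j w_jx_j=0$ then encodes precisely a balanced partition. Hence computing $v^*$ decides set partition, and as the construction is polynomial in $N$, solving DFO~\eqref{infty_was_concave} is NP-hard when $p=1$.

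I expect the main obstacle to be engineering a single concave objective that cleanly certifies partition feasibility. Both ingredients are indispensable: $N\ge2$ is what converts the sum over samples into the $\ell_1$ term, and $K\ge2$ is what turns the sign-symmetric $\ell_\infty$ penalty into a coordinatewise $-|x_i|$. This is exactly why neither the single-sample regime of Theorem~\ref{prop_robust_min_tractable}(ii) nor a single-piece instance suffices. The one place demanding care is the ``only if'' direction, namely verifying that $\|\bm x\|_1=N$ over the box forces integrality, so that the balance constraint becomes the partition condition.
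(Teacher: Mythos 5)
Your proof is correct, and while its first half coincides with the paper's (collapse to $N=1$ so that \eqref{infty_was_concave} becomes sDFO \eqref{robust_min_b} and invoke the single-sample hardness result --- you cite Proposition~\ref{prop_np_hard_robust_min_b}, which is the result the paper's text actually needs where it mistakenly points to Theorem~\ref{prop_robust_min_tractable}), your $p=1$ reduction takes a genuinely different route. The paper reduces from the feasibility problem of a binary program: it sets $\theta=0$, $b_k(\bm x)=1$, uses $K=2n$ pieces of the form $\pm x_k\bm e_k$ together with the diagonal samples $\rzeta^i=\bm e_i$ to make the $i$th summand equal $1-|x_i|$, and encodes the combinatorics in the polyhedral constraints $\bm D\bm x\leq \bm d$ inside $\X$, so that $v^*=0$ iff a binary feasible point exists. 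You instead reduce from set partition with only $K=2$ pieces $\bm a_1(\bm x)=\bm x$, $\bm a_2(\bm x)=-\bm x$, exploiting that the dual-norm penalty is identical on both pieces and hence factors out of the inner minimum, yielding $-\frac{1}{N}\|\bm x\|_1-\theta\|\bm x\|_\infty$; your extremality argument (on the box, $\|\bm x\|_1=N$ forces $\bm x\in\{-1,1\}^N$, whence the balance constraint encodes partition) is sound, and Assumption~\ref{assum_0} holds since $\bm 0$ is a relative interior point of your $\X$. Two trade-offs are worth noting. First, your construction works for every radius $\theta\geq 0$, so it shows the $p=1$ hardness is not an artifact of the degenerate choice $\theta=0$ in the paper, where the ambiguity set collapses to the empirical distribution; on the other hand, set partition is only weakly NP-hard, so you obtain weak NP-hardness where the paper's binary-program reduction gives strong NP-hardness (the proposition claims only NP-hardness, so both suffice). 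Second, since every norm satisfies $\|-\bm x\|_{p^*}=\|\bm x\|_{p^*}$, your construction in fact works verbatim for all $p\in[1,\infty]$ with threshold $-1-\theta N^{1/p^*}$: equality still forces $\bm x\in\{-1,1\}^N$, which simultaneously attains $\|\bm x\|_{p^*}=N^{1/p^*}$. Thus your case split --- and the appeal to Proposition~\ref{prop_np_hard_robust_min_b} --- could be dropped entirely, giving a single uniform proof of the full statement, something neither your write-up nor the paper observes.
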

\begin{proof}
For any $p\in(1,\infty]$, DFO \eqref{infty_was_concave} reduces to the favorable optimization \eqref{robust_max_b} if there is only one sample available for the empirical distribution $\Pr_{\trzeta}$ (i.e., $N=1$). Thus, according to Theorem~\ref{prop_robust_min_tractable}, solving DFO \eqref{infty_was_concave} is, in general, NP-hard. It remains to show that solving DFO \eqref{infty_was_concave} is also NP-hard when $p=1$.
Recall the NP-complete problem - the feasibility problem of a general binary program, which asks
\begin{quote}\it
\textbf{Feasibility of a binary program.} Given an integer matrix $\bm D \in \Ze^{\tau\times n}$, and integer vector $\bm d\in\Ze^\tau$, is there a vector $\bm x\in\{-1,1\}^n$ such that $\bm D\bm x\leq \bm d$? 
\end{quote}
Let us consider the following special case of DFO \eqref{infty_was_concave}. Let set $\X=\{\bm x: \bm D\bm x\leq \bm d, -1\leq x_i \leq 1,\forall i\in[n]\}$ and suppose $\theta=0$, $b_k(\bm x)=1$ for each $k\in[K]$, $\rzeta^i =\bm e_i$ for each $i\in[N]$, $N=n$, $K=2n,  m=n$, and 
\begin{align*}
\bm a_k(\bm x) = \left\{\begin{aligned}
x_k\bm e_k,
\quad &k\leq n, \\
-x_k\bm e_k, \quad  & n+1\leq k\leq 2n.
\end{aligned}
\right.
\end{align*}
Then, the inner minimum $\min_{k\in[K]}{\rzeta^i}^\top \bm a_k(\bm x)+b_k(\bm x)$  reduces to 
\begin{align*}
\min_{k\in[K]}{\rzeta^i}^\top \bm a_k(\bm x)+b_k(\bm x) = \min\left\{ 1,1-x_i,1+x_i \right\} = \min\left\{1,1-|x_i|\right\} =1-|x_i|.
\end{align*}
Thus, DFO \eqref{infty_was_concave} can be written as
\begin{align}
v^*= \min_{\begin{subarray}{c}
\bm D\bm x\leq \bm d, \\
\bm x\in[-1,1]^n
\end{subarray}}\frac{1}{n}\sum_{i\in[n]}\left[1-|x_i|\right]. \label{infty_was_concave_special_case}
\end{align}
We observe that the optimal value of DFO \eqref{infty_was_concave_special_case} $v^*=0$ if and only if $|x_i|=1$ for all $i\in [n]$, i.e., if and only if there exists a binary feasible solution $\bm x\in\{-1,1\}^n$ such that  $\bm D\bm x\leq \bm d$. Thus, solving problem \eqref{infty_was_concave_special_case} is NP-hard, and so is DFO \eqref{infty_was_concave}.
\QEDA
\end{proof}

Albeit being NP-hard, when $p\in\{1,\infty\}$, next theorem provides MICP-R formulation for  DFO \eqref{infty_was_concave}. 
\begin{restatable}{theorem}{propinftywasmicprconcave}\label{prop_infty_was_micpr_concave} 
Suppose $\X\subseteq [\bm{l},\bm{u}]$, let $\hat{l}^b_{k}=\sum_{j\in [n]}\min\{\hat{B}_{kj} l_j,\hat{B}_{kj} u_j\}+\hat{b}_{k}$ and $\hat{u}^b_{k}=\sum_{j\in [n]}\max\{\hat{B}_{kj} l_j,\hat{B}_{kj} u_j\}+\hat{b}_{k}$  such that $\bm{b}_k(\bm x)\in [\hat{{l}}^b_k,\hat{{u}}^b_k]$ for each $k\in[K]$, and
let $\hat{l}^a_{ki}=\sum_{j\in [n]}\min\{\hat{A}_{kij} l_j,\hat{A}_{kij} u_j\}+\hat{a}_{ki}$ and $\hat{u}^a_{ki}=\sum_{j\in [n]}\max\{\hat{A}_{kij} l_j,\hat{A}_{kij} u_j\}+\hat{a}_{ki}$ for each $i\in [m]$ such that $\bm{a}_k(\bm x)\in [\hat{\bm{l}}^a_k,\hat{\bm{u}}^a_k]$ for each $k\in[K]$. 
When $p\in\{1,\infty\}$, DFO \eqref{infty_was_concave} is MICP-R. 
\end{restatable}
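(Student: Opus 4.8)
The plan is to exploit the separable structure of the objective in \eqref{infty_was_concave} together with the closure properties of MICP-R functions. Observe first that the objective decomposes as an average $\frac{1}{N}\sum_{i\in[N]} g_i(\bm x)$ over the $N$ reference samples, where for each $i\in[N]$ the per-sample function is the finite minimum
\begin{align*}
g_i(\bm x)=\min_{k\in[K]}\left[{\rzeta^i}^\top \bm a_k(\bm x)+b_k(\bm x)-\theta\left\|\bm a_k(\bm x)\right\|_{p^*}\right].
\end{align*}
My strategy is therefore to (a) show each inner term is MICP-R, (b) conclude each $g_i$ is MICP-R because the minimum of finitely many MICP-R functions is MICP-R over the compact domain $\X$, and (c) conclude the average is MICP-R because finite sums of MICP-R functions are MICP-R.

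For step (a), I would note that each inner term $h_{ik}(\bm x)={\rzeta^i}^\top \bm a_k(\bm x)+b_k(\bm x)-\theta\|\bm a_k(\bm x)\|_{p^*}$ is affine in $\bm x$ (since $\bm a_k,b_k$ are affine) plus a reverse-norm term $-\theta\|\bm a_k(\bm x)\|_{p^*}$ composed with the affine map $\bm a_k(\cdot)$. When $p=\infty$ the dual norm is $p^*=1$, which is exactly the situation of Case 1 of Theorem~\ref{prop_min_micpr}: linearize each bilinear product $a_{ki}(\bm x)z_i$ with $\bm z\in\{-1,1\}^m$ through the McCormick relations $\mathcal{MI}(-1,1,\hat{l}^a_{ki},\hat{u}^a_{ki})$, using the stated component bounds on $\bm a_k(\bm x)$. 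When $p=1$ the dual norm is $p^*=\infty$, the reverse $L_\infty$ case, which by Case 3 of the proof of Lemma~\ref{micp_example} is MICP-R via a disjunction over which coordinate attains $\max_i|a_{ki}(\bm x)|$; compactness of $\X$ again yields the formulation by disjunctive programming. In both cases the epigraph of $h_{ik}$ over $\X$ is MICP-R.

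For step (b), the epigraph of $g_i$ is the finite union $\bigcup_{k\in[K]}\epi(h_{ik})$, and since each $\epi(h_{ik})$ is a bounded MICP-R set (as $\X$ is compact), a finite union of such sets is again MICP-R through disjunctive programming, with the branch selection encoded by a binary vector. For step (c), introducing auxiliary epigraph variables $\eta_i\geq g_i(\bm x)$ tied together through $t\geq \frac{1}{N}\sum_{i\in[N]}\eta_i$ realizes the epigraph of the average as an intersection (over the shared $\bm x$) of the MICP-R epigraphs of the $g_i$, which is MICP-R; combining this with $\bm x\in\X$, a compact convex set, produces the desired MICP formulation.

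The main obstacle I anticipate is bookkeeping rather than conceptual: the result follows cleanly from closure of MICP-R under affine composition, finite minimum, and finite sum, but one must ensure that every disjunction is taken over a bounded set so its disjunctive representation is valid, and that the two nested layers of nonconvexity---the outer $\min_{k\in[K]}$ and the inner reverse norm---are each represented with their own binary variables without interference. The cleanest write-up assembles the explicit formulation by stacking, for each sample $i$ and each branch $k$, either the reverse-norm McCormick block (when $p=\infty$) or the coordinate disjunction (when $p=1$), guarded by the outer selection binaries.
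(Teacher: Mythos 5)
Your proposal is correct and follows essentially the same route as the paper: binary variables select the minimizing affine piece of $Q$ for each sample $i\in[N]$, and compactness of $\X$ lets McCormick inequalities (for $p=\infty$, dual norm $L_1$) and disjunctive programming (for $p=1$, dual norm $L_\infty$) handle the reverse-norm term, which is exactly the paper's explicit formulation with selectors $\lambda_{ki}\in\{0,1\}$, $\sum_{k\in[K]}\lambda_{ki}=1$, and products $\lambda_{ki}\bm a_k(\bm x)$, $\lambda_{ki}b_k(\bm x)$ linearized via $\mathcal{MI}(0,1,\cdot,\cdot)$. One wording correction in your step (b): $\epi(h_{ik})$ is not bounded (it recedes in the $t$ direction even though $\X$ is compact), but since all $K$ epigraphs share the common recession cone $\{\bm 0\}\times\Re_+$, the disjunctive union formulation remains valid, so your argument goes through unchanged.
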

\begin{proof}
We first introduce binary variables $\bm\lambda$ to reformulate the inner minimum, that is,
\begin{align*}
v^*= \min_{\bm x\in\X,\bm\lambda}\quad &\frac{1}{N}\sum_{i\in[N]}\sum_{k\in[K]}\lambda_{ki}\left[{\rzeta^i}^\top \bm a_k(\bm x)+b_k(\bm x) - \theta\left\| \bm a_k(\bm x)\right\|_{p^*}\right],\\
\textup{s.t.} \quad & \sum_{k\in[K]}\lambda_{ki}=1, \forall i\in[N],\\
& \lambda_{ki}\in\{0,1\},\forall k\in[K],i\in[N].
\end{align*}
Since set $\X$ is compact, we can apply McCormick inequalities \cite{mccormick1976computability} to linearize the terms \noindent $\{\lambda_{ki}\bm a_k(\bm x) \}_{i\in[N],k\in[K]}$ and $\{\lambda_{ki}b_k(\bm x)\}_{i\in[N],k\in[K]}$. It remains to provide the MICP-R formulation for the term $\{\lambda_{ki}\| \bm a_k(\bm x)\|_{p^*} \}_{i\in[N],k\in[K]}$. We split the discussions into two parts.
\begin{itemize}
\item[(i)] When $p=\infty$, i.e., the dual norm is $L_1$, the term $\{\lambda_{ki}\| \bm a_k(\bm x)\|_{1} \}$ can be linearized by applying McCormick inequalities twice for each $i\in[N]$ and $k\in[K]$. In this case,  DFO \eqref{infty_was_concave} is equivalent to the following MICP:
\begin{align*}
v^*= \min_{\bm x\in\X,\bm\lambda, \bm s^a,\bm s^b,\bm \eta}\quad &\frac{1}{N}\sum_{i\in[N]}\sum_{k\in[K]}  \sum_{j\in[m]}\zeta_{ij}s^a_{kij} +\frac{1}{N}\sum_{i\in[N]}\sum_{k\in[K]}s^b_{ki}-\frac{\theta }{N}\sum_{i\in[N]}\sum_{k\in[K]} \eta_{ki}, \\
\textup{s.t.} \quad & \sum_{k\in[K]}\lambda_{ki}=1, \forall i\in[N],\\
& \left(s^a_{kij},\lambda_{ki},a_{kj}(\bm x)\right)\in \mathcal{MI} (0,1,\hat{l}^a_{kj},\hat{u}^a_{kj}),\forall i\in[N],j\in[m], k\in[K],\\
& \left(s^b_{ki},\lambda_{ki},b_k(\bm x) \right)\in \mathcal{MI} (0,1,\hat{l}^b_{k},\hat{u}^b_{k}),\forall i\in[N], k\in[K],\\
&  \eta_{ki} \geq  s^a_{kij} , \forall i\in[N],j\in[m], k\in[K],\\
&  \eta_{ki} \geq - s^a_{kij} , \forall i\in[N],j\in[m], k\in[K].
\end{align*}

\item[(ii)] When $p=1$, i.e., the dual norm is $L_\infty$,
we can apply disjunctive programming \cite{balas1979disjunctive} to the terms $\{\lambda_{ki}\| \bm a_k(\bm x)\|_{\infty}\}$ and then apply McCormick inequalities to linearize 
$\{\lambda_{ki}\bm a_{kj}(\bm x) \}_{i\in[N],k\in[K],j\in[m]}$. In this case, DFO \eqref{infty_was_concave} is equivalent to the following MICP:
\begin{align*}
v^*= \min_{\bm x\in\X,\bm\lambda, \bm s^a,\bm s^b,\bm \eta}\quad &\frac{1}{N}\sum_{i\in[N]}\sum_{k\in[K]}  \sum_{j\in[m]}\zeta_{ij}s^a_{kij} +\frac{1}{N}\sum_{i\in[N]}\sum_{k\in[K]}s^b_{ki}-\frac{\theta }{N}\sum_{i\in[N]}\sum_{k\in[K]} \eta_{ki}, \\
\textup{s.t.} \quad & \sum_{k\in[K]}\lambda_{ki}=1, \forall i\in[N],\\
& \left(s^a_{kij},\lambda_{ki},a_{kj}(\bm x)\right)\in \mathcal{MI} (0,1,\hat{l}^a_{kj},\hat{u}^a_{kj}),\forall i\in[N],j\in[m], k\in[K],\\
& \left(s^b_{ki},\lambda_{ki},b_k(\bm x) \right)\in \mathcal{MI} (0,1,\hat{l}^b_{k},\hat{u}^b_{k}),\forall i\in[N], k\in[K],\\
&  \eta_{ki} \geq  s^a_{kij} , \forall i\in[N],j\in[m], k\in[K],\\
&  \eta_{ki} \geq - s^a_{kij} , \forall i\in[N],j\in[m], k\in[K].
\end{align*}
\end{itemize}
Therefore, according to the result in Lemma~\ref{micp_example}, DFO \eqref{infty_was_concave} is MICP-R with $p\in\{1,\infty\}$.
\QEDA
\end{proof}

Moreover, notice that sDFO \eqref{robust_min_b} is a special case of DFO \eqref{infty_was_concave} with $N=1$. According to Theorem~\ref{prop_min_micpr}, DFO \eqref{infty_was_concave} may not be MICP-R when $p\in(1,\infty)$.
\begin{corollary}
\label{corollary_infty_was_concave}
When $p\in(1,\infty)$, DFO \eqref{infty_was_concave} may not be MICP-R.
\end{corollary}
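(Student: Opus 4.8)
The plan is to obtain the result as an immediate specialization of Theorem~\ref{prop_min_micpr}, exploiting the fact that sDFO \eqref{robust_min_b} sits inside DFO \eqref{infty_was_concave} as the case $N=1$. First I would set $N=1$ in \eqref{infty_was_concave} and identify the single reference point $\rzeta^1$ with $\rxi^0$. The outer empirical average then collapses to a single summand, and the objective becomes exactly
\[
\min_{k\in[K]}\left\{{\rzeta^1}^\top \bm a_k(\bm x)+b_k(\bm x)-\theta\|\bm a_k(\bm x)\|_{p^*}\right\},
\]
which is precisely the objective of sDFO \eqref{robust_min_b} with $\rxi^0=\rzeta^1$. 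Hence every instance of \eqref{robust_min_b} is realized as an instance of \eqref{infty_was_concave}.

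Next I would recall that ``may not be MICP-R'' is an existential claim, established by exhibiting a single witnessing instance. In the proof of Theorem~\ref{prop_min_micpr}, Case~2, this witness is the instance with $K=1$, $\rxi^0=\bm 0$, $\bm a_1(\bm x)=\bm x$, and $b_1(\bm x)=0$, whose epigraph reduces to $\{(\bm x,t)\colon -\|\bm x\|_{p^*}\leq t,\ \bm x\in\X\}$, matching \eqref{epigraph_set_micp_lemma}; Lemma~\ref{micp_example} then shows, via the infinite-sequence criterion of Lemma~\ref{lemma2_lubin}, that this epigraph is not MICP-R for $p\in(1,\infty)$ (equivalently $p^*\in(1,\infty)$, so the dual norm is strictly convex). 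Because the $N=1$ reduction recovers this epigraph verbatim, the same infinite sequence $\{\hat{\bm x}^j\}_j$ lying on the sphere $\|\bm x\|_{p^*}=\|\bar{\bm x}\|_{p^*}$ witnesses that DFO \eqref{infty_was_concave} fails to be MICP-R in this instance, which proves the corollary.

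There is essentially no substantive obstacle; the only point requiring care is that the reduction is an exact identity rather than a relaxation -- namely, that setting $N=1$ reproduces the sDFO epigraph as a set, so that the non-representability witness transfers without any modification. Since the two objectives coincide as functions of $\bm x$ after the identification $\rxi^0=\rzeta^1$, their epigraphs coincide as subsets of $\X\times\Re$, and no further argument is needed.
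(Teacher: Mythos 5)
Your proposal is correct and follows exactly the paper's route: the paper likewise observes that sDFO \eqref{robust_min_b} is the $N=1$ special case of DFO \eqref{infty_was_concave} (identifying $\rzeta^1$ with $\rxi^0$) and then invokes Theorem~\ref{prop_min_micpr}, whose Case~2 witness ($K=1$, $\rxi^0=\bm 0$, $\bm a_1(\bm x)=\bm x$, $b_1(\bm x)=0$) reduces the epigraph to \eqref{epigraph_set_micp_lemma} and applies Lemma~\ref{micp_example} via Lemma~\ref{lemma2_lubin}. Your write-up merely makes explicit the details the paper leaves implicit, including the correct observation that $p\in(1,\infty)$ implies $p^*\in(1,\infty)$ so the dual norm is strictly convex.
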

We remark that Corollary~\ref{corollary_infty_was_concave} shows the conditions under which DFO \eqref{infty_was_concave} may not be MICP-R.

\noindent\textbf{Special Case II. Convex Piecewise Function.}
Following the same notation introduced in Section~\ref{eq_uq_simple_cases_convex}, i.e., function $Q(\bm x, \rxi)$ is the maximum of piecewise affine function, DFO \eqref{infty_was} can be written as
\begin{align}
v^*= \min_{\bm x\in\X}\frac{1}{N}\sum_{i\in[N]}\left[\inf_{\rxi}\left\{\max_{k\in[K]}\rxi^\top \bm a_k(\bm x)+b_k(\bm x)\colon \|\rxi-\rzeta^i\|_p\leq \theta\right\}\right]. \label{infty_was_convex} 
\end{align}
In this special case, 
all the results in Section~\ref{eq_uq_simple_cases_convex} can be naturally extended to DFO \eqref{infty_was_convex}, as presented below. The proofs and formulations are omitted for brevity. 
\begin{corollary}
For DFO \eqref{infty_was_convex}, the complexity and tractability results in Section~\ref{eq_uq_simple_cases_convex} directly follow.
\end{corollary}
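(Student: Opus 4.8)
The plan is to treat DFO \eqref{infty_was_convex} as a finite average of $N$ copies of the single-sample inner value function of sDFO \eqref{robust_max_b}, coupled only through the common decision $\bm x\in\X$. For each sample index $i$, the inner infimum $\inf_{\rxi}\{\max_{k\in[K]}\rxi^\top\bm a_k(\bm x)+b_k(\bm x):\|\rxi-\rzeta^i\|_p\le\theta\}$ is precisely the inner problem of sDFO \eqref{robust_max_b} with the nominal point $\rxi^0$ replaced by the data point $\rzeta^i$. Applying Lemma~\ref{robust_max_refor} sample-by-sample therefore rewrites each summand in the minimax form \eqref{robust_max_eq} centered at $\rzeta^i$, and DFO \eqref{infty_was_convex} becomes $\min_{\bm x\in\X}\frac{1}{N}\sum_{i\in[N]}(\text{single-sample value centered at }\rzeta^i)$.

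With this reduction in hand, I would first transfer the negative results. Taking $N=1$ makes \eqref{infty_was_convex} coincide verbatim with \eqref{robust_max_b} (set $\rxi^0=\rzeta^1$), so the NP-hardness of Proposition~\ref{robust_max_b_hard} for every $p\in[1,\infty]$ and the failure of MICP-representability of Proposition~\ref{prop_max_micpr} for $p\in(1,\infty)$ carry over immediately to DFO \eqref{infty_was_convex}.

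For the positive tractability and MICP-R results I would verify that each hypothesis of Theorem~\ref{prop_robust_max_tractable} and Theorem~\ref{prop_max_micpr_good} is preserved under the averaging. The crucial observation — which is what makes the extension \emph{direct} rather than combinatorial — is that in every one of those conditions the nonconvex ingredient of the inner value function (the term $-\theta\|\bm a_k(\bm x)\|_{p^*}$, or equivalently the single-norm disjunction over $\|\bm a_1(\bm x)\|_\infty$ in parts (ii)--(iii) and over $\max_{k\in[K]}\lambda_kC_k$) depends only on the maps $\bm a_k(\bm x)$, which are sample-independent. Consequently, summing over the $N$ samples only aggregates the convex part together with the terms affine in $\rzeta^i$: under condition (i) one introduces per-sample epigraph variables $\eta^i$ and auxiliary copies $\rxi^i$ and obtains a single convex program; under conditions (ii)--(iii) the common concave term factors out of the sum, so that the affine data terms collapse into an averaged nominal point $\bar{\rzeta}=\frac{1}{N}\sum_{i\in[N]}\rzeta^i$ and the decomposition into $2m$ (respectively polynomially many) tractable convex programs is shared across all samples. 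For $p=\infty$ the MICP-R formulations \eqref{robust_max_micp_special_case_1}--\eqref{robust_max_micp_special_case_2} with $\rxi^0$ replaced by $\rzeta^i$ then assemble through the closure of MICP-representability under finite sums, namely stacking the per-sample epigraphs and minimizing $\frac{1}{N}\sum_{i\in[N]}t_i$ with the shared variable $\bm x$.

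The main obstacle is precisely this potential $(2m)^N$ blow-up in parts (ii) and (iii): a careless treatment would attach an independent $2m$-way disjunction to each of the $N$ summands. I would resolve it by carrying the concave term outside the sum before decomposing — exploiting that it is a function of $\bm a_k(\bm x)$ alone and hence independent of $i$ — so that a single disjunction governs all samples simultaneously while the per-sample coupling through $\rzeta^i$ stays affine. This is exactly why the complexity, tractability, and MICP-R classifications of Section~\ref{eq_uq_simple_cases_convex} transfer without modification, which justifies omitting the repetitive formulations and establishes the corollary.
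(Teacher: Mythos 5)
Your overall architecture matches what the paper intends (its own proof of this corollary is explicitly omitted ``for brevity''): the negative results transfer via the $N=1$ specialization, under which \eqref{infty_was_convex} is verbatim sDFO \eqref{robust_max_b}, so Proposition~\ref{robust_max_b_hard} and Proposition~\ref{prop_max_micpr} apply unchanged; and the positive results extend because the nonconvex dual-norm ingredient depends only on the sample-independent maps $\bm a_k(\bm x)$, so per-sample epigraph or dual variables can be stacked over $i\in[N]$ against the shared decision $\bm x\in\X$, with a single disjunction or McCormick linearization serving all $N$ summands. Your mechanisms for condition (i) and (ii) of Theorem~\ref{prop_robust_max_tractable} and for both parts of Theorem~\ref{prop_max_micpr_good} are correct; in particular, the collapse to $\bar{\rzeta}=\frac{1}{N}\sum_{i\in[N]}\rzeta^i$ in case (ii) is exact, because $\bm a_k(\bm x)=\bm a_1(\bm x)$ makes every per-sample value equal to $\max_{k\in[K]}b_k(\bm x)+{\rzeta^i}^\top\bm a_1(\bm x)-\theta\|\bm a_1(\bm x)\|_{\infty}$, which is affine in $\rzeta^i$.

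However, your mechanism for condition (iii) of Theorem~\ref{prop_robust_max_tractable} fails as stated. There the per-sample value, read off from \eqref{robust_max_eq}, is $g(\rzeta^i;\bm x)=\max\{\sum_{k\in[K]}\lambda_k[{\rzeta^i_k}^\top\bar{\bm a}_k(\bm x)+b_k(\bm x)]-\theta\max_{k\in[K]}\lambda_kC_k \colon \bm\lambda\geq \bm 0, \sum_{k\in[K]}\lambda_k=1\}$, a pointwise maximum of functions affine in $\rzeta^i$ and hence convex in $\rzeta^i$; consequently $\frac{1}{N}\sum_{i\in[N]}g(\rzeta^i;\bm x)\geq g(\bar{\rzeta};\bm x)$, with strict inequality whenever different samples select different inner maximizers $\bm\lambda^i$, so ``collapsing the affine data terms into an averaged nominal point'' computes the wrong value in case (iii). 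Relatedly, case (iii) contains no $2m$-way disjunction to factor out of the sum: after LP duality the single-sample problem is already a single convex program, so the correct (and simpler) extension is to dualize each summand separately and stack the per-sample dual variables $(\beta^i,\bm\gamma^i)$, i.e., minimize $\frac{1}{N}\sum_{i\in[N]}\beta^i$ over $\bm x\in\X$ subject to $\sum_{k\in[K]}\gamma^i_k=\theta$, $\bm\gamma^i\geq\bm 0$, and $\beta^i\geq {\rzeta^i_k}^\top\bar{\bm a}_k(\bm x)+b_k(\bm x)-\gamma^i_kC_k$ for all $k\in[K]$, $i\in[N]$. This is exactly the stacking device you already deploy for case (i) and for the MICP-R formulations, so the repair is local and the corollary goes through.
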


\subsection{DFO with the Ambiguity Set of Finite Support}
\label{sec_dfo_finite_support}

Many robust statistics recovered by DFO  \eqref{dfo} can be considered as DFO  \eqref{dfo} with the finite-support ambiguity set, i.e., when the support $\U:=\{\rxi^i\}_{i\in [N]}$ is finite (see more discussions in \cite{jiang2023dfo}). In this setting, we cannot obtain any nontrivial tractable results for DFO \eqref{dfo}. Furthermore, since evaluating the best case in DFO \eqref{dfo} with a given decision is, in general, NP-hard (see Proposition~\ref{prop_eva_dfo}), we instead focus on the MICP-R formulations of DFO \eqref{dfo} with finite support in this subsection.
Notably, when the ambiguity set $\P$ is a polytope (i.e., $\P=\left\{ \bm p\in\Re_+^{N}\colon \bm D\bm p \leq \bm d ,\e^\top \bm p =1 \right\}$ is a polytope with  $\bm D \in \Re^{\ell \times N}$ and $\bm d \in \Re^{\ell}$), we show that DFO \eqref{dfo} is MICP-R by observing that the number of the extreme points of the polyhedral ambiguity set $\P$ is finite. More specifically, DFO \eqref{dfo} with polyhedral ambiguity set $\P$ can be written as
\begin{equation}
v^*=\min_{\bm x \in \X}\min_{\bm p\geq \bm 0}\left\{\sum_{i\in [N]}p_iQ(\bm x,\rxi^i)\colon \bm D\bm p \leq \bm d ,\e^\top \bm p =1 \right\}.\label{dfo_finite}
\end{equation}

\begin{restatable}{theorem}{polymixedrepresentable}\label{poly_mixed_representable} 
Suppose that both set $\X$ and function $Q(\bm x,\rxi)$ are MICP-R. Then, the corresponding DFO \eqref{dfo_finite} under a polyhedral ambiguity set $\P=\left\{ \bm p\in\Re_+^{N}\colon \bm D\bm p \leq \bm d ,\e^\top \bm p =1 \right\}$, is MICP-R. 
\end{restatable}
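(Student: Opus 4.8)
The plan is to exploit the finiteness of the extreme point set of the polytope $\P$ to collapse the inner minimization into a finite enumeration, and then to invoke closure properties of the MICP-R class. First I would observe that $\P=\{\bm p\in\Re_+^N\colon \bm D\bm p\leq \bm d,\ \e^\top\bm p=1\}$ is bounded (it lies inside the probability simplex) and is therefore a polytope with a finite set of extreme points $\{\bm p^\ell\}_{\ell\in[L]}$, $L<\infty$. For any fixed $\bm x\in\X$, the inner problem $\min_{\bm p\in\P}\sum_{i\in[N]}p_iQ(\bm x,\rxi^i)$ is a linear program in $\bm p$ over $\P$, so its optimal value is attained at an extreme point. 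Hence DFO \eqref{dfo_finite} is equivalent to
\[
v^*=\min_{\bm x\in\X}\ \min_{\ell\in[L]}\ g_\ell(\bm x),\qquad g_\ell(\bm x):=\sum_{i\in[N]}p^\ell_iQ(\bm x,\rxi^i).
\]

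Next I would argue that each $g_\ell$ is MICP-R. Since $Q(\cdot,\rxi^i)$ is MICP-R, its epigraph admits an MICP formulation; because $p^\ell_i\geq 0$, I can write
\[
\epi(g_\ell)=\left\{(\bm x,t)\colon \exists\,t_1,\dots,t_N,\ Q(\bm x,\rxi^i)\leq t_i\ \forall i\in[N],\ \sum_{i\in[N]}p^\ell_it_i\leq t\right\},
\]
which is the intersection of the lifted MICP-R epigraphs of $Q(\cdot,\rxi^i)$ with one linear inequality, the variables $t_1,\dots,t_N$ being additional continuous auxiliary variables to be projected out. The nonnegativity of $p^\ell_i$ guarantees that $\sum_i p^\ell_it_i\leq t$ together with $t_i\geq Q(\bm x,\rxi^i)$ is equivalent to $t\geq g_\ell(\bm x)$, and intersections, additions of linear inequalities, and projections all preserve MICP-R. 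Intersecting with the MICP-R domain $\X$ then keeps $g_\ell$ MICP-R.

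Then I would handle the outer finite minimum. The epigraph of $\min_{\ell\in[L]}g_\ell$ over $\X$ equals the finite union $\bigcup_{\ell\in[L]}\epi(g_\ell)$, and since $\X$ is compact each $\epi(g_\ell)$ carries the boundedness needed to apply disjunctive programming \cite{balas1979disjunctive}, exactly as in Case~3 of Lemma~\ref{micp_example}, so a finite union of MICP-R sets is again MICP-R. Consequently the objective of the reduced problem is MICP-R and its feasible region $\X$ is MICP-R, whence by Definition~\ref{def_micpr}(iv) DFO \eqref{dfo_finite} is MICP-R.

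The main obstacle will be the two closure steps rather than the reduction itself: I must verify carefully that nonnegative combinations of MICP-R functions stay MICP-R (this relies on $p^\ell_i\geq 0$, so the lifted inequalities point the right way) and that finite unions of MICP-R sets stay MICP-R (this is where compactness of $\X$ is used to invoke disjunctive programming cleanly). A secondary point to check is that the domains of $Q(\cdot,\rxi^i)$ are mutually compatible with $\X$, so that the lifted epigraph and its projection correctly describe $\epi(g_\ell)$ and the union correctly describes $\epi(\min_{\ell}g_\ell)$.
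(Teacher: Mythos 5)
Your proposal is correct and follows essentially the same route as the paper's own proof: both reduce DFO \eqref{dfo_finite} to a finite enumeration over the extreme points of the polytope $\P$ (the inner problem being a linear program in $\bm p$) and then conclude MICP-representability from the assumptions on $\X$ and $Q(\bm x,\rxi)$. You merely spell out the closure steps (nonnegative combinations via lifted epigraphs, and the finite minimum via a disjunction as in Case~3 of Lemma~\ref{micp_example}) that the paper's terser proof leaves implicit.
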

\begin{proof}
Since the polyhedral ambiguity set $\P$ is a polytope, we can enumerate all its extreme points, i.e., $\bm\gamma^1,\dots, \bm\gamma^s \in \Re_+^N$ are the total $s$ vertices of $\P$. Then, DFO \eqref{dfo_finite} is equivalent to
\begin{align*}
v^*=\min_{j\in [s]}\min_{\bm x\in\X}\sum_{i\in [N]}\gamma^j_iQ(\bm x,\rxi^i),
\end{align*}
which is MICP-R, since both set $\X$ and function $Q(\bm x,\rxi)$ are MICP-R.
\QEDA
\end{proof}

We remark that the proof of Theorem~\ref{poly_mixed_representable} relies on the enumeration of extreme points of the polyhedral ambiguity set $\P$, which can be computationally inefficient. Instead of enumerating all the extreme points of the polyhedral ambiguity set $\P$, there exists an alternative MICP-R formulation of the corresponding DFO \eqref{dfo_finite}, which is based on the KKT condition.

\begin{restatable}{theorem}{polymixedrepresentableequivalentinteger}\label{poly_mixed_representable_equivalent_integer} 
Suppose that $\bm D \in \Ze^{\ell \times N}$ in the polyhedral ambiguity set $\P$ and $N \geq \ell+1$, there exists an $\bar{L}$ such that the largest row encoding length of the matrix $\begin{bmatrix} \bm d^\top & 1 \\ \bm D^\top & \bm e \end{bmatrix}$ is $\bar{L}$.
Then, under the same presumptions in Theorem~\ref{poly_mixed_representable}, 
the corresponding DFO \eqref{dfo_finite} can be written as:
\begin{align}
v^*=\min_{\bm x,\bm v,t,\bm\alpha,\beta,\bm z, \bar{\bm z}, \bm p}\left\{t\colon
\begin{array}{cc}
\displaystyle   \bm\alpha^\top \bm d +\beta \leq t, \bm D^\top\bm\alpha+\beta\e \leq \bm v,\\
\displaystyle  0\leq \alpha_j \leq M_{j,1} z_j,\forall j\in[\ell], 0\leq  d_j - \bm D_{j\cdot}\bm p \leq M_{j,2} (1-z_j), \forall j\in[\ell],\\
\displaystyle 0\leq p_j \leq \bar{z}_j, \forall i\in[N], \bm e^\top \bm p =1, \\
\displaystyle 0\leq  v_i - \bm D_{\cdot i}^\top \bm \alpha -\beta \leq \bar{M}_i (1-\bar{z}_i),\forall i\in[N],\\
\displaystyle v_i\geq Q(\bm x,\rxi^i), \forall i\in[N],  \bm z\in\{0,1\}^{\ell}, \bar{\bm z}\in\{0,1\}^{N}, \bm x\in\X
\end{array} 
\right\}, \label{dfo_poly_mixed_representable_equivalent}
\end{align}
where $M_{j,1}$ and $M_{j,2}$ are valid upper bounds for  $\alpha_j$ and $ \bm d_j - \bm D_{j\cdot}\bm p$, respectively, for each $j\in[\ell]$, and $\bar{M}_i$ is the valid upper bound for $v_i - \bm D_{\cdot i}^\top \bm \alpha -\beta$ for each $i\in[N]$, i.e., letting 
${U}_{i,1}=\max_{\bm x\in\X} Q(\bm x,\rxi^i)$ for each $i\in[N]$, and the corresponding big-M coefficients in DFO \eqref{dfo_poly_mixed_representable_equivalent} can be found as:
\begin{align*}
& M_{j,1} \geq {U}_{i,1}(\ell+1)2^{\bar{L}}, \forall j\in[\ell],M_{j,2} \geq  d_j + \sum_{i\in[N]}|D_{ji}|, \forall j\in[\ell], \\
& \bar{M}_i \geq {U}_{i,1} \left(\sum_{j\in[\ell]}|D_{ji}|+1\right)(\ell+1)2^{\bar{L}} + \min_{i\in[N]}{U}_{i,1}(\ell+1)2^{\bar{L}}, \forall i\in[N].
\end{align*}
\end{restatable}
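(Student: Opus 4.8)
The plan is to replace the inner minimization over $\bm p$ in DFO~\eqref{dfo_finite}, which for a fixed $\bm x$ becomes a linear program once we substitute an epigraph variable $v_i$ for $Q(\bm x,\rxi^i)$, by its Karush--Kuhn--Tucker (KKT) optimality conditions, and then to linearize the resulting complementary-slackness products with binary variables and big-$M$ constraints. First I would introduce $v_i\ge Q(\bm x,\rxi^i)$ for each $i\in[N]$; because $\bm p\ge \bm 0$ and the whole program is a minimization, any optimal solution drives $v_i$ down to $Q(\bm x,\rxi^i)$ on the support of $\bm p$, while the residual slack on the remaining indices only relaxes dual feasibility, so the substitution is without loss. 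For fixed $\bm x$ and $\bm v$ the inner problem $\min_{\bm p\ge \bm 0}\{\bm v^\top\bm p : \bm D\bm p\le \bm d,\ \bm e^\top\bm p=1\}$ is a linear program; I would attach the multiplier $\bm\alpha$ to $\bm D\bm p\le \bm d$ and the free multiplier $\beta$ to $\bm e^\top\bm p=1$, and record primal feasibility, dual feasibility ($\bm D^\top\bm\alpha+\beta\bm e\le \bm v$), and the two complementary-slackness identities $\alpha_j(d_j-\bm D_{j\cdot}\bm p)=0$ and $p_i(v_i-\bm D_{\cdot i}^\top\bm\alpha-\beta)=0$, which together are necessary and sufficient for optimality. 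Strong LP duality then shows the common optimal value equals $\bm\alpha^\top\bm d+\beta$, which is exactly what the constraint $\bm\alpha^\top\bm d+\beta\le t$ captures once $t$ is minimized.

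The next step is to cast complementary slackness in mixed-integer-linear form. For the primal constraints I would use $\bm z\in\{0,1\}^\ell$ with $0\le \alpha_j\le M_{j,1}z_j$ and $0\le d_j-\bm D_{j\cdot}\bm p\le M_{j,2}(1-z_j)$, so that $z_j=0$ forces $\alpha_j=0$ and $z_j=1$ forces the $j$-th primal constraint tight; symmetrically, for the sign constraints $\bm p\ge \bm 0$ I would use $\bar{\bm z}\in\{0,1\}^N$ with $0\le p_i\le \bar z_i$ (valid since $\bm e^\top\bm p=1$ forces $p_i\le 1$) and $0\le v_i-\bm D_{\cdot i}^\top\bm\alpha-\beta\le \bar M_i(1-\bar z_i)$. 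A short verification that primal feasibility, dual feasibility, and these disjunctions reproduce the KKT system—via the algebra $\bm v^\top\bm p=\sum_i(\bm D_{\cdot i}^\top\bm\alpha+\beta)p_i=\bm\alpha^\top\bm D\bm p+\beta=\bm\alpha^\top\bm d+\beta$ on the active indices—closes the equivalence between \eqref{dfo_finite} and \eqref{dfo_poly_mixed_representable_equivalent}.

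The hard part is certifying valid big-$M$ constants, which is where the integrality of $\bm D$ and the encoding length $\bar L$ enter. I would argue that an optimal dual solution can be taken at a vertex of $\{(\bm\alpha,\beta):\bm D^\top\bm\alpha+\beta\bm e\le \bm v,\ \bm\alpha\ge \bm 0\}$, hence as the unique solution of an $(\ell+1)\times(\ell+1)$ nonsingular subsystem of the matrix $\begin{bmatrix}\bm D^\top & \bm e\end{bmatrix}$ with right-hand side drawn from $\bm v$. Cramer's rule expresses each coordinate as a ratio of determinants; since $\bm D$ is integral the denominator has absolute value at least $1$, while a cofactor/Hadamard-type expansion bounds the numerator by the product over a row of the encoded magnitudes (captured by $2^{\bar L}$), the dimension factor $(\ell+1)$, and the right-hand-side bound $U_{i,1}=\max_{\bm x\in\X}Q(\bm x,\rxi^i)$. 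This yields $M_{j,1}\ge U_{i,1}(\ell+1)2^{\bar L}$; an analogous accounting for the residual $v_i-\bm D_{\cdot i}^\top\bm\alpha-\beta$, adding the contribution of the row $\bm D_{\cdot i}$ and a shift by $\min_i U_{i,1}(\ell+1)2^{\bar L}$, gives $\bar M_i$, and the primal-slack bound $M_{j,2}\ge d_j+\sum_{i\in[N]}|D_{ji}|$ follows directly from $\bm p\in[0,1]^N$ with $\bm e^\top\bm p=1$. I expect the determinant bookkeeping to be the most delicate point, both in matching the stated constants and in confirming that each $U_{i,1}$ is finite, which relies on compactness of $\X$ from Assumption~\ref{assum_0} together with the presumed MICP-R (hence well-posed maximization) of $Q$.

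Finally, I would assemble the pieces: the constraints $\bm x\in\X$ and $v_i\ge Q(\bm x,\rxi^i)$ are MICP-R by hypothesis, every other relation in \eqref{dfo_poly_mixed_representable_equivalent} is linear in the continuous variables $(\bm p,\bm\alpha,\beta,\bm v,t)$ and the binaries $(\bm z,\bar{\bm z})$, and the objective is the single variable $t$. Hence the entire program is a mixed-integer convex program, so DFO~\eqref{dfo_finite} is MICP-R and admits the claimed formulation, giving an alternative to the vertex-enumeration route of Theorem~\ref{poly_mixed_representable}.
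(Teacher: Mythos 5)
Your proposal is correct and follows essentially the same route as the paper's proof: epigraph variables $v_i \geq Q(\bm x,\rxi^i)$, replacement of the inner LP over $\bm p$ by complementary slackness, big-$M$ linearization with the binaries $\bm z,\bar{\bm z}$, and dual-variable bounds obtained from extreme points of the dual polyhedron via the integrality of $\bm D$ and the encoding length $\bar L$. The only cosmetic difference is that you spell out the Cramer's-rule determinant bookkeeping where the paper establishes that the dual level set is nonempty and line-free and then cites Gr\"otschel et al.\ for the extreme-point bound $(\ell+1)2^{\bar L}$ -- the same standard argument in both cases.
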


\begin{proof}
We split the proof into four steps.

\noindent{\textbf{Step I.}} 
Introducing the slack variables $\bm v$ and $t$ for DFO \eqref{dfo_finite}, we have
\begin{align*}
v^*=\min_{\bm x\in\X}\min_{\bm v,t}\left\{t\colon  \min_{\bm p\geq \bm 0}\left\{\sum_{i\in [N]}p_i v_i\colon \bm D\bm p \leq \bm d, \e^\top \bm p =1 \right\} \leq t, v_i\geq Q(\bm x,\rxi^i), \forall i\in[N] \right\}.
\end{align*}
For the innermost minimization of the problem over $\bm p$, we can use the complementary slackness to obtain the equivalent reformulation with big-M coefficients, that is,
\begin{align*}
v^*=\min_{\bm x\in\X,\bm v,t,\bm\alpha,\beta,\bm z,\bm p}\left\{t\colon
\begin{array}{cc}
\displaystyle  \bm\alpha^\top \bm d +\beta \leq t, \bm D^\top\bm\alpha+\beta\e \leq \bm v,\\
\displaystyle  0\leq \alpha_j \leq M_{j,1} z_j,\forall j\in[\ell], 0\leq  d_j - \bm D_{j\cdot}\bm p \leq M_{j,2} (1-z_j), \forall j\in[\ell],\\
\displaystyle 0\leq p_j \leq \bar{z}_j, \forall i\in[N], \bm e^\top \bm p =1, \\
\displaystyle 0\leq  v_i - \bm D_{\cdot i}^\top \bm \alpha -\beta \leq \bar{M}_i (1-\bar{z}_i),\forall i\in[N],\\
\displaystyle v_i\geq Q(\bm x,\rxi^i), \forall i\in[N],  \bm z\in\{0,1\}^{\ell}, \bar{\bm z}\in\{0,1\}^{N}
\end{array} 
\right\}.
\end{align*}

\noindent{\textbf{Step II.}} 
We then demonstrate that
set 
\begin{align*}
 \bar{\Theta} = \left\{ (\bm\alpha,\beta) \colon  \begin{bmatrix} 
    \bm d^\top & 1 \\ 
    \bm D^\top & \bm e \\
    - \bm I & \bm 0
    \end{bmatrix}
     \begin{bmatrix} 
    \bm \alpha \\ 
    \beta
    \end{bmatrix} 
    \leq 
      \begin{bmatrix} 
    t\\ 
    \bm v\\
    \bm 0
    \end{bmatrix} \right\}
\end{align*}
is nonempty and contains no line. 
It is evident that the point $(\bm\alpha=\bm 0,\beta=\min_{i\in[N]}v_i \in \bar{\Theta}$, indicating the existence of at least one point in set $\bar{\Theta}$. Hence, set $\bar{\Theta}$ is nonempty. 
Suppose there exists a vector $\bm d$ such that for every point $(\bar{\bm \alpha},\bar{\beta})$ within set $\bar{\Theta}$, $(\bar{\bm \alpha},\bar{\beta})+\lambda \bm d$ is also in set $\bar{\Theta}$ for all $\lambda\in\Re$. That is,
\begin{align*}
    \begin{bmatrix} 
    \bm d^\top & 1 \\ 
    \bm D^\top & \bm e \\
    - \bm I & \bm 0
    \end{bmatrix}
    \left( \begin{bmatrix} 
    \bar{\bm \alpha} \\ 
    \bar{\beta}
    \end{bmatrix} +\lambda \bm d\right)
    \leq 
      \begin{bmatrix} 
    t\\ 
    \bm v\\
    \bm 0
    \end{bmatrix}, \forall \lambda\in\Re. 
\end{align*}
Therefore, we have $\bm d= \bm 0$.
Hence, set $\bar{\Theta}$ contains no line, which implies that set $\bar{\Theta}$ contains an extreme point.

\noindent{\textbf{Step III.}} 
We now proceed to demonstrate that each extreme point in $\bar{\Theta}$ is bounded. Since there exists an $\bar{L}$ such that the row encoding length of every subset of $(\ell+1)$ rows from the matrix $\begin{bmatrix} \bm d^\top & 1 \\ \bm D^\top & \bm e \end{bmatrix}$ is $\bar{L}$ (see the details in \cite{grotschel2012geometric}), we have 
\begin{align*}
\mathrm{ext}\left\{ (\bm\alpha,\beta) \colon  \begin{bmatrix} 
    \bm d^\top & 1 \\ 
    \bm D^\top & \bm e \\
    - \bm I & \bm 0
    \end{bmatrix}
     \begin{bmatrix} 
    \bm \alpha \\ 
    \beta
    \end{bmatrix} 
    \leq 
      \begin{bmatrix} 
    t\\ 
    \bm v\\
    \bm 0
    \end{bmatrix} \right\}  
    \subseteq
    \left\{ (\bm\alpha,\beta) \colon  
    \alpha_j \leq {U}_{i,1}(\ell+1)2^{\bar{L}}, \forall j\in[\ell], |\beta| \leq  \min_{i\in[N]}{U}_{i,1}(\ell+1)2^{\bar{L}} 
    \right\}.
\end{align*}

\noindent{\textbf{Step IV.}} 
Next, we determine the values of big-M coefficients. By substituting the upper bounds of $\bm\alpha$ and $\beta$, we can determine the explicit values for the big-M coefficients in DFO \eqref{dfo_poly_mixed_representable_equivalent}. Specifically, the coefficients can be found as follows:
\begin{align*}
  & M_{j,1} \geq {U}_{i,1}(\ell+1)2^{\bar{L}}\geq \alpha_j,\forall j\in[\ell],\\
  &  M_{j,2} \geq  d_j + \sum_{i\in[N]}|D_{ji}|\geq  d_j  - \bm D_{j\cdot}\bm p, \forall j\in[\ell],\\
  & \bar{M}_i  \geq {U}_{i,1} \left(\sum_{j\in[\ell]}|D_{ji}|+1\right)(\ell+1)2^{\bar{L}} + \min_{i\in[N]}{U}_{i,1}(\ell+1)2^{\bar{L}}   \geq   v_i - \bm {D}_{\cdot i}^\top \bm \alpha -\beta, \forall i\in[N].
\end{align*}
This completes the proof.
\QEDA
\end{proof}

We remark that based on the results in Theorem~\ref{poly_mixed_representable_equivalent_integer}, following the same presumptions in Theorem~\ref{poly_mixed_representable} with nonnegative matrix $\bm D$ and positive vector $\bm d$, we can obtain similar results. Below is an example.

\begin{restatable}{corollary}{polymixedrepresentableequivalent}\label{poly_mixed_representable_equivalent} 
Under the same presumptions as that in Theorem~\ref{poly_mixed_representable} and the assumptions that $\bm D\geq \bm 0$ and $\bm d >\bm 0$ in the polyhedral ambiguity set $\P$, the corresponding big-M coefficients in DFO \eqref{dfo_poly_mixed_representable_equivalent} can be found as:
\begin{align*}
& M_{j,1}  \geq  \frac{1}{d_j}\left[ \max_{i\in[N]}{U}_{i,1} - \min_{i\in[N]}{L}_{i,1} \right], \forall j\in[\ell],\\
& M_{j,2} \geq  d_j, \forall j\in[\ell], \bar{M}_i \geq {U}_{i,1}-\min_{i\in[N]}{L}_{i,1}, \forall i\in[N],
\end{align*}
where $M_{j,1}$ and $M_{j,2}$ are valid upper bounds for  $\alpha_j$ and $ \bm d_j - \bm D_{j\cdot}\bm p$, respectively, for each $j\in[\ell]$, and $\bar{M}_i$ is the valid upper bound for $v_i - \bm D_{\cdot i}^\top \bm \alpha -\beta$ for each $i\in[N]$, i.e., letting 
${L}_{i,1}=\min_{\bm x\in\X} Q(\bm x,\rxi^i)$ for each $i\in[N]$, ${U}_{i,1}=\max_{\bm x\in\X} Q(\bm x,\rxi^i)$ for each $i\in[N]$.
\end{restatable}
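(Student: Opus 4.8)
The plan is to specialize the generic big-$M$ construction of Theorem~\ref{poly_mixed_representable_equivalent_integer}, replacing the encoding-length estimates obtained there from Cramer's rule (the $(\ell+1)2^{\bar L}$ factors bounding the extreme points of $\bar\Theta$) by explicit bounds that exploit the sign structure $\bm D\ge\bm 0$, $\bm d>\bm 0$. Recall from Step~I of that proof that the inner minimization $\min_{\bm p}\{\bm v^\top\bm p\colon \bm D\bm p\le\bm d,\ \bm e^\top\bm p=1,\ \bm p\ge\bm 0\}$ is replaced by primal feasibility, dual feasibility with multipliers $(\bm\alpha,\beta)$, $\bm\alpha\ge\bm 0$, and complementary slackness; the three quantities that the big-$M$ coefficients must dominate are the multiplier $\alpha_j$, the primal slack $d_j-\bm D_{j\cdot}\bm p$, and the reduced cost $v_i-\bm D_{\cdot i}^\top\bm\alpha-\beta$. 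First I would record two elementary facts that will be used repeatedly: since at an optimal solution one may take $v_i=Q(\bm x,\rxi^i)$, each $v_i$ lies in $[L_{i,1},U_{i,1}]$; and since $\bm p$ lies on the probability simplex, the inner optimal value is a convex combination of the $v_i$ and hence lies in $[\min_i L_{i,1},\max_i U_{i,1}]$.

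The bound $M_{j,2}\ge d_j$ is then immediate: because $\bm D\ge\bm 0$ and $\bm p\ge\bm 0$ we have $\bm D_{j\cdot}\bm p\ge 0$, so the primal slack obeys $0\le d_j-\bm D_{j\cdot}\bm p\le d_j$. For the reduced-cost coefficient I would use $\bm D_{\cdot i}^\top\bm\alpha\ge 0$ (again from $\bm D,\bm\alpha\ge\bm 0$) to discard that nonnegative term, giving $v_i-\bm D_{\cdot i}^\top\bm\alpha-\beta\le v_i-\beta\le U_{i,1}-\min_i L_{i,1}$, provided $\beta$ is bounded below by $\min_i L_{i,1}$. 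For the multiplier coefficient I would start from strong duality, which at optimality relates $\bm\alpha^\top\bm d$ to the gap between the optimal value and $\beta$; bounding the optimal value above by $\max_i U_{i,1}$ and $\beta$ below by $\min_i L_{i,1}$ yields $\bm\alpha^\top\bm d\le\max_i U_{i,1}-\min_i L_{i,1}$. Since every term $\alpha_{j'}d_{j'}$ is nonnegative and $d_j>0$, I can isolate a single coordinate, $\alpha_j\le\tfrac{1}{d_j}\bm\alpha^\top\bm d\le\tfrac{1}{d_j}\bigl(\max_i U_{i,1}-\min_i L_{i,1}\bigr)$, which is exactly the claimed $M_{j,1}$.

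Both refined coefficients thus come down to the same requirement: confining the multiplier $\beta$ of the normalization constraint to the interval $[\min_i L_{i,1},\max_i U_{i,1}]$, and this is the step I expect to be the main obstacle. One of the two inclusions is immediate from complementary slackness, since any index carrying positive mass makes its dual constraint tight and pins $\beta$ to that $v_i$ up to a sign-definite term; the reverse inclusion is the crux, because a naive appeal to dual feasibility alone (which only gives $\beta\le v_i+\bm D_{\cdot i}^\top\bm\alpha$ or its analogue) is too weak. The fix I would use is a short marginal/sensitivity argument: $\beta$ equals the objective coefficient of the coordinate that absorbs an infinitesimal change in the total mass, and such a coordinate necessarily sits in constraints that are slack, hence carry zero multiplier, so $\bm D_{\cdot i^*}^\top\bm\alpha=0$ and $\beta=v_{i^*}\in[\min_i v_i,\max_i v_i]\subseteq[\min_i L_{i,1},\max_i U_{i,1}]$. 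This confinement of $\beta$ to the range of the objective coefficients — rather than the generic $2^{\bar L}$ bound — is precisely where the nonnegativity of $\bm D$ and positivity of $\bm d$ do the real work. Once $\beta$ is so confined, substituting the resulting bounds on $\bm\alpha$ and $\beta$ into \eqref{dfo_poly_mixed_representable_equivalent}, exactly as in Step~IV of Theorem~\ref{poly_mixed_representable_equivalent_integer}, produces the stated coefficients and completes the proof.
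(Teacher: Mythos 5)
Your reconstruction of the three coefficients from the interval bounds on $t$ and $\beta$ is exactly the route the paper takes: its published proof of this corollary is essentially a two-line assertion that $t\in[\min_{i\in[N]}L_{i,1},\max_{i\in[N]}U_{i,1}]$ and $\beta\in[\min_{i\in[N]}L_{i,1},\max_{i\in[N]}U_{i,1}]$, followed by the same substitution as in Step~IV of Theorem~\ref{poly_mixed_representable_equivalent_integer}. Your derivations of $M_{j,2}$ (the slack is at most $d_j$ since $\bm D\geq\bm 0$, $\bm p\geq\bm 0$), of $\bar M_i$ (drop the sign-definite term $\bm D_{\cdot i}^\top\bm\alpha$), and of $M_{j,1}$ (isolate one coordinate of $\bm\alpha^\top\bm d$ using $d_j>0$ and strong duality) are fine conditional on that confinement of $\beta$, and you are right that the confinement is where all the content sits.

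The gap is precisely your justification of that crux step, and it is not repairable as stated. First, the marginal/sensitivity reading of $\beta$ presumes the perturbed problem (total mass $1+\epsilon$) is feasible and the value function differentiable; with $\bm D=\bm I$ and $\bm d=(1/2,1/2)^\top$ the unique feasible $\bm p$ is $(1/2,1/2)^\top$, no mass can be absorbed anywhere, ``the coordinate that absorbs an infinitesimal change'' does not exist, and the dual-optimal set is an unbounded ray in $\beta$. Second, and more seriously, the conclusion you are trying to establish is itself false at this level of generality, so no argument can close the step under only $\bm D\geq\bm 0$, $\bm d>\bm 0$. Take $N=2$, $\ell=1$, $\bm D=(60\ \ 58)$, $d_1=59$, $Q(\bm x,\rxi^1)\equiv 0$, $Q(\bm x,\rxi^2)\equiv 1$, so that $\min_i L_{i,1}=0$ and $\max_i U_{i,1}=1$. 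The inner problem has the unique optimum $\bm p^*=(1/2,1/2)^\top$ with full support and the single row binding; the stationarity conditions $v_i-\bm D_{\cdot i}^\top\bm\alpha-\beta=0$ for $i=1,2$ then pin down the multipliers uniquely (whichever sign convention one adopts for $\bm\alpha$ in \eqref{dfo_poly_mixed_representable_equivalent}): $|\alpha_1|=1/2$ and $\beta=30$. Here both coordinates meet the unique binding row, so no coordinate $i^*$ with $\bm D_{\cdot i^*}^\top\bm\alpha=0$ exists and $\beta\neq v_{i^*}$ for every $i^*$ --- exactly the assertion your sensitivity argument needs --- while $\beta=30\notin[0,1]$ and $|\alpha_1|=1/2>\tfrac{1}{59}=\tfrac{1}{d_1}\bigl(\max_i U_{i,1}-\min_i L_{i,1}\bigr)$, so the claimed $M_{1,1}$ cuts off the only complementary pair. (The same instance defeats the paper's one-line assertion about the range of $\beta$, so the missing ingredient is genuinely additional structure on $(\bm D,\bm d)$, not a cleverer proof; your heuristic happens to work only when some coordinate is disjoint from all binding rows, as in the examples where $\bm D$ has a zero column.)
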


\begin{proof}
Under the conditions that $\bm D\geq \bm 0$, $\bm d >\bm 0$, and set $\X$ is compact, we know $t$ in DFO \eqref{dfo_poly_mixed_representable_equivalent} is bounded by $t\in [\min_{i\in[N]}{L}_{i,1}, \max_{i\in[N]}{U}_{i,1}]$, and $\beta$ in DFO \eqref{dfo_poly_mixed_representable_equivalent} is bounded by $\beta\in [\min_{i\in[N]}{L}_{i,1}, \max_{i\in[N]}{U}_{i,1}]$. Then, we obtain explicit values for the valid upper bounds $\{M_{j,1}\}_{j\in[\ell]}$, $\{M_{j,2}\}_{j\in[\ell]}$, $\{\bar{M}_i\}_{i\in[N]}$. This completes the proof.
\QEDA
\end{proof}


\noindent\textbf{A Special Case: MICP-R Formulation for Interval Polyhedral Ambiguity Set.} Building on Theorem~\ref{poly_mixed_representable} and Theorem~\ref{poly_mixed_representable_equivalent_integer}, 
we provide a compact MICP-R formulation for a particular type of ambiguity set, namely the interval polyhedral ambiguity set, which is defined as $\P_{I}=\{ \bm p= \bm p^0 + \bm \psi \in\Re_+^N \colon \bm l \leq\bm \psi \leq  \bm u, \bm e^\top \bm \psi=0 \}$.
Here, we let $\bm p^0$ denote the nominal probability vector with $\bm p^0 \geq \bm 0 $ and $\sum_{i\in[N]}p_i^0=1$, the lower bound vector $\bm{l}\geq-\bm p^0$ 
and the bounds $l_i=\bar l_i/q, u_i=\bar u_i/q$ with $q$ being a positive integer and $\bar l_i,\bar u_i$ being integers for each $i\in[N]$.

\begin{restatable}{corollary}{polymixedrepresentableupperlower}\label{poly_mixed_representable_upper_lower} 
Suppose both set $\X$ and function $Q(\bm x,\rxi)$ are MICP-R. Then under the interval polyhedral ambiguity set $\P_{I}$, the optimal value of the corresponding DFO \eqref{dfo} is $v^*=\min_{j\in [N],\tau\in [\bar{l}_j,\bar{u}_j]}v_{j\tau}^*$ and for each $j\in [N]$ and $\tau \in\{\bar{l}_j,\bar{l}_j+1,\cdots,\bar{u}_j\}$,
the value $v_{j\tau}^*$ can be computed by solving the following MICP-R formulation:
\begin{align*}
v_{j\tau}^*= \min_{\begin{subarray}{c}
\bm x\in\X,\bm\eta,\bm \nu,\\ \bm z^j\in\{0,1\}^N
\end{subarray} } & \, \sum_{i\in[N]\setminus \{j\}}\left[p^0_i+\bar l_i/q\right]\nu_i+ \sum_{i\in[N]\setminus \{j\}}(\bar u_i/q-\bar l_i/q)\eta^j_{i}+ \left(p^0_j+\tau/q\right) \nu_j,\\
\textup{s.t.}\quad & \nu_i\geq Q(\bm x,\rxi^i), \left(\eta^j_{i},z^j_{i},\nu_i\right) \in \mathcal{MI} \left(0,1,L_i,U_i\right),\forall i\in[N], j\in[N],\\
& -\sum_{i\in[N]\setminus \{j\}} (\bar l_i+\left(\bar u_i-\bar l_i)z^j_i\right) =\tau,
\end{align*}
where for each $i\in[N]$, $L_i$ and $U_i$ are the valid lower and upper bounds of the function $Q(\bm x,\rxi^i)$, respectively.
\end{restatable}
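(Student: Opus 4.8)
The plan is to exploit the lattice structure that the interval polyhedral ambiguity set $\P_I$ imposes on the inner minimization. First I would substitute $\bm p = \bm p^0 + \bm\psi$ into the inner problem of DFO~\eqref{dfo}, so that for a fixed decision $\bm x$ it becomes the linear program
\begin{align*}
\min_{\bm\psi}\left\{\sum_{i\in[N]} p_i^0\,Q(\bm x,\rxi^i) + \sum_{i\in[N]}\psi_i\,Q(\bm x,\rxi^i)\colon \bm l\leq\bm\psi\leq\bm u,\ \e^\top\bm\psi=0\right\}.
\end{align*}
The first sum is a constant in $\bm\psi$, so the task reduces to minimizing the linear functional $\sum_{i\in[N]}\psi_i Q(\bm x,\rxi^i)$ over the polytope $\Psi:=\{\bm\psi\colon \bm l\leq\bm\psi\leq\bm u,\ \e^\top\bm\psi=0\}$.

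The key structural step is to characterize the vertices of $\Psi$. Since $\Psi$ sits in $\Re^N$ with a single equality constraint and $2N$ box constraints, any vertex must activate $N-1$ distinct box constraints in addition to the equality; hence at most one component $\psi_j$ lies strictly inside $[l_j,u_j]$, while every other $\psi_i$ equals $l_i$ or $u_i$. Rescaling by $q$, i.e. writing $\psi_i=\phi_i/q$, makes the bounds the integers $\bar l_i,\bar u_i$: the $N-1$ components pinned at bounds are then integers, and the equality $\e^\top\bm\phi=0$ forces the free component $\phi_j=-\sum_{i\neq j}\phi_i$ to be an integer too. Thus $\Psi$ is integral after rescaling, and there is an optimal inner solution in which one index $j$ satisfies $\psi_j=\tau/q$ for an integer $\tau\in\{\bar l_j,\dots,\bar u_j\}$ and every other $\psi_i\in\{l_i,u_i\}$. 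This justifies the outer enumeration $v^*=\min_{j\in[N],\,\tau\in[\bar l_j,\bar u_j]}v_{j\tau}^*$.

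For a fixed pair $(j,\tau)$ I would then encode the remaining choice with a binary $z_i^j$ for each $i\neq j$, setting $\psi_i=l_i+(u_i-l_i)z_i^j=(\bar l_i+(\bar u_i-\bar l_i)z_i^j)/q$, so that the sum-zero constraint becomes exactly $-\sum_{i\neq j}(\bar l_i+(\bar u_i-\bar l_i)z_i^j)=\tau$. Because every probability $p_i=p_i^0+\psi_i\geq 0$ on $\P_I$, I can replace each $Q(\bm x,\rxi^i)$ by an epigraph variable $\nu_i\geq Q(\bm x,\rxi^i)$ without changing the optimal value, as the nonnegative objective coefficients drive $\nu_i$ down to $Q(\bm x,\rxi^i)$ at optimality. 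The objective $\sum_{i\in[N]}(p_i^0+\psi_i)\nu_i$ then carries the bilinear products $z_i^j\nu_i$ for $i\neq j$, which I linearize using the McCormick representation $\mathcal{MI}(0,1,L_i,U_i)$ of Definition~\ref{mc_defintion} with the valid bounds $L_i\leq\nu_i\leq U_i$; the free-index term $(p_j^0+\tau/q)\nu_j$ remains linear because $\tau$ is an enumerated constant, not a variable. This reproduces precisely the stated formulation for $v_{j\tau}^*$, and since $\X$ and $Q(\bm x,\rxi)$ are MICP-R while every added constraint is mixed-integer linear, each $v_{j\tau}^*$ is MICP-R.

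The main obstacle is the vertex/integrality argument: I must confirm that the inner linear program always attains its optimum at a point of $\tfrac1q\Ze^N\cap\Psi$ possessing a single free coordinate, so the enumeration over $(j,\tau)$ is simultaneously finite and exhaustive. Once this is secured, the epigraph substitution and the McCormick linearization are routine, and the only subtlety to flag is that the free-index term contributes no bilinear product and hence requires no auxiliary $\eta$ variable.
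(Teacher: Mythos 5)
Your proposal is correct and follows essentially the same route as the paper's proof: both rest on the vertex characterization of the interval polytope (at most one free coordinate, all others pinned at $l_i$ or $u_i$), the rescaling by $q$ that forces the free coordinate to equal $\tau/q$ for an integer $\tau\in[\bar l_j,\bar u_j]$, the enumeration over $(j,\tau)$ with the binary encoding $\psi_i=l_i+(u_i-l_i)z_i^j$, and the epigraph-plus-McCormick linearization of the products $z_i^j\nu_i$. Your explicit justification of the epigraph substitution via the nonnegativity of the coefficients $p_i^0+\psi_i$ is a detail the paper leaves implicit, but it does not change the argument.
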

\begin{proof}
DFO \eqref{dfo} is equivalent to
\begin{align}
v^*=\min_{\bm x\in\X}\min_{\bm \psi}\left\{\sum_{i\in[N]}\left[ (p^0_i+\psi_i)Q(\bm x,\rxi^i) \colon  \sum_{i\in[N]}\psi_i=0, \bar l_i/q\leq \psi_i \leq \bar u_i/q,\forall i\in[N]\right]\right\}. \label{poly_mixed_representable_simplify}
\end{align}
According to the extreme point characterization of the ambiguity set $\P_{I}$, for any extreme point $\hat{\bm{\psi}}$, it has at least $N-1$ components taking values from $\bm l$ or $\bm{u}$ and one component corresponding to equality constraint. Let us assume that component $j\in [N]$ corresponds to the equality constraint. Accordingly, we can define the binary variable $z^j_i\in\{0,1\}$ for each $i\in[N]$ and $\hat{\psi}_i= l_i+(u_i-l_i)z_i^j$ for each $i\in[N]\setminus \{j\}$. Since we have $\sum_{i\in [N]}\hat{\psi}_i=0$, thus
$\hat \psi_j=-\sum_{i\in[N]\setminus\{j\}} (l_i+(u_i-l_i)z^j_i)$. Plugging the extreme point representation into set $\P_{I}$, we have
\begin{align*}
\P_{I}=\conv\left[\bigcup_{j\in [N]}\left\{\bm p= \bm p^0 + \hat{\bm \psi} \in\Re_+^N \colon \begin{array}{cc}
\displaystyle\hat{\psi}_i= l_i+(u_i-l_i)z_i^j,  \forall i\in[N]\setminus \{j\}, \\
\displaystyle \hat \psi_j=-\sum_{i\in[N],i\setminus\{j\}} (l_i+(u_i-l_i)z^j_i) \in [l_j,u_j]
\end{array} \right\}\right].
\end{align*}
Plugging the representation of set $\P_I$, DFO \eqref{poly_mixed_representable_simplify} is equivalent to $v^*=\min_{j\in [N]}v_j^*$ and for each $j\in [N]$,
\begin{align*}
v_j^*=\min_{\begin{subarray}{c}
\bm x\in\X,\\ \bm z^j\in\{0,1\}^N
\end{subarray} } &  \sum_{i\in[N]\setminus\{j\} }\left[p^0_i+l_i+(u_i-l_i)z^j_i \right]Q(\bm x,\rxi^i) + \left(p^0_j-\sum_{i\in[N],i\setminus\{j\}} \left(l_i+(u_i-l_i)z^j_i\right)\right) Q(\bm x,\rxi^j),\\
\text{s.t.}\quad&l_j\leq -\sum_{i\in[N]\setminus\{j\}}\left (l_i+(u_i-l_i)z^j_i\right) \leq u_j.
\end{align*}
Since $l_i=\bar l_i/q, u_i=\bar u_i/q$ for each $i\in[N]$, then for each $j\in[N]$, 
the expression $-\sum_{i\in[N]\setminus\{j\}} (l_i+(u_i-l_i)z^j_i)$ can take values from $\{\tau/q\}_{\tau\in [\bar{l}_j,\bar{u}_j]}$ and $\tau$ is an integer. This fact allows us to simplify $v_j^*=\min_{\tau\in [\bar{l}_j,\bar{u}_j]}v_{j\tau}^*$, where
\begin{align*}
v_{j\tau}^*=\min_{\begin{subarray}{c}\bm x\in\X, \bm\nu,\\ {\bm z}^j\in\{0,1\}^N\end{subarray} } &\sum_{i\in[N]\setminus\{j\}}\left[p^0_i+\frac{\bar l_i}{q}+\frac{1}{q}(\bar u_i-\bar l_i)z^j_i \right]\nu_i + \left(p^0_j+\frac{\tau}{q}\right) \nu_j,\\
\text{s.t.}\quad& -\sum_{i\in[N]\setminus\{j\}} \left(\bar l_i+(\bar u_i-\bar l_i)z^j_i\right) =\tau, \nu_i\geq Q(\bm x,\rxi^i), \forall i\in [N],
\end{align*}
for each $j\in [N]$ and $\tau\in [\bar{l}_j,\bar{u}_j]$.
Since set $\X$ is compact, we can apply the McCormick inequalities \citep{mccormick1976computability} to linearize the bilinear terms $\{z^j_i\nu_i\}_{i\in[N],j\in[N]}$, this completes the proof.
\QEDA
\end{proof}

We remark that as a direct application of Corollary~\ref{poly_mixed_representable_upper_lower}, when $l_i=l,u_i=u$ for each $i\in[N]$, the MICP-R formulation of Corollary~\ref{poly_mixed_representable_upper_lower} can be further simplified.

\begin{corollary}
\label{mixed_representable_upper_lower_fcvar}
Suppose that the premises of Corollary~\ref{poly_mixed_representable_upper_lower} hold and $p^0_i=1/N, l_i=-1/N,u_i=u$ for each $i\in[N]$.
Then the optimal value of the corresponding DFO \eqref{dfo} is $v^*=\min_{j\in [N]}v_{j}^*$ and for each $j\in [N]$, the value $v_{j}^*$ can be computed via the following MICP-R formulation:
\begin{align*}
v_{j}^*= \min_{\begin{subarray}{c}
\bm x\in\X,\bm\eta,\bm \nu,\\ \bm z^j\in\{0,1\}^N
\end{subarray} } & \, \sum_{i\in[N] \setminus \{j\}}(u+1/N)\nu_i+(1-\floor{\kappa}/\kappa ) \nu_j,\\
\textup{s.t.}\quad & \nu_i\geq Q(\bm x,\rxi^i), \left(\eta^j_{i},z^j_{i},\nu_i\right) \in \mathcal{MI} \left(0,1,L_i,U_i\right),\forall i\in[N], j\in[N],\\
& \sum_{i\in[N]\setminus \{j\}} z^j_i =\lfloor \kappa \rfloor,
\end{align*}
where $\kappa=N/(uN+1)$ and for each $i\in[N]$, $L_i$ and $U_i$ are the lower and upper bounds of the function $Q(\bm x,\rxi^i)$, respectively.
\end{corollary}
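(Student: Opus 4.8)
The plan is to obtain this statement as a direct specialization of Corollary~\ref{poly_mixed_representable_upper_lower}, the key phenomenon being that the chosen parameters force the auxiliary index $\tau$ to take a unique feasible value, so the double minimization $\min_{j\in[N],\tau}$ collapses to $\min_{j\in[N]}$. First I would match the present data $p_i^0=1/N$, $l_i=-1/N$, $u_i=u$ to the integral representation demanded by Corollary~\ref{poly_mixed_representable_upper_lower}: since $l_i$ and $u_i$ are constant in $i$, take $\bar l_i=\bar l$ and $\bar u_i=\bar u$ for all $i$ with common denominator $q$ so that $-1/N=\bar l/q$ and $u=\bar u/q$. Substituting into the objective of Corollary~\ref{poly_mixed_representable_upper_lower}, the off-diagonal coefficient $p_i^0+\bar l_i/q=1/N-1/N=0$ annihilates every plain $\nu_i$ term, leaving only the bilinear contribution $(u+1/N)\eta_i^j$ with $\eta_i^j=z_i^j\nu_i$, while the diagonal coefficient on $\nu_j$ becomes $p_j^0+\tau/q$.

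The crux is to show that feasibility of the recovered extreme point pins $\sum_{i\in[N]\setminus\{j\}}z_i^j$ to a single integer. Writing $S:=\sum_{i\in[N]\setminus\{j\}}z_i^j$, the probability mass on sample $j$ equals $p_j^0+\psi_j=1-(u+1/N)S$, and the requirement $\psi_j\in[l_j,u_j]=[-1/N,u]$ is equivalent to $0\le 1-(u+1/N)S\le u+1/N$. Using the identity $u+1/N=1/\kappa$ with $\kappa=N/(uN+1)$, these two inequalities rearrange to $\kappa-1\le S\le\kappa$, whose only integer solution (for non-integral $\kappa$) is $S=\floor{\kappa}$. This is precisely the constraint $\sum_{i\in[N]\setminus\{j\}}z^j_i=\floor{\kappa}$ in the claimed formulation, and it is exactly what allows the minimization over $\tau$ to be dropped.

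With $S=\floor{\kappa}$ fixed, I would compute the diagonal coefficient explicitly: the mass on sample $j$ is $1-(u+1/N)\floor{\kappa}=1-\floor{\kappa}/\kappa$, recovering the stated coefficient $(1-\floor{\kappa}/\kappa)$ on $\nu_j$. The remaining step is mechanical: since $\X$ is compact and $Q(\bm x,\rxi^i)$ is MICP-R, each $Q(\bm x,\rxi^i)$ admits valid bounds $L_i\le Q(\bm x,\rxi^i)\le U_i$, so each product $z_i^j\nu_i$ is replaced by its exact McCormick representation $(\eta_i^j,z_i^j,\nu_i)\in\mathcal{MI}(0,1,L_i,U_i)$ of Definition~\ref{mc_defintion}, yielding the displayed MICP. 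I expect the main obstacle to be the arithmetic bookkeeping of the second paragraph: one must verify that the window $[\kappa-1,\kappa]$ contains a unique integer and confirm the identity $u+1/N=1/\kappa$ so that both the constraint $S=\floor{\kappa}$ and the coefficient $1-\floor{\kappa}/\kappa$ emerge cleanly. The degenerate case where $\kappa$ is an integer (admitting $S\in\{\kappa-1,\kappa\}$) should be flagged as non-generic, or handled by noting both choices yield the same optimal mass allocation.
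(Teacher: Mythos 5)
Your proposal is correct and takes essentially the same approach as the paper, which states this corollary as a direct specialization of Corollary~\ref{poly_mixed_representable_upper_lower} without writing out a proof: your computation supplies exactly the intended details --- the vanishing of the $p_i^0+\bar l_i/q$ coefficients for $i\neq j$, the identity $u+1/N=1/\kappa$ pinning $S=\floor{\kappa}$ and thereby collapsing the minimization over $\tau$, and the resulting coefficient $1-\floor{\kappa}/\kappa$ on $\nu_j$. Two apt observations on your part: your derivation correctly yields the bilinear terms $(u+1/N)\eta_i^j$ in the objective, whereas the corollary as printed writes $(u+1/N)\nu_i$ (an apparent typo in the paper), and your flagging of the degenerate integer-$\kappa$ case is consistent with the paper, which implicitly excludes it in its experiments by choosing $N\varepsilon$ non-integral.
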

The result in Corollary~\ref{mixed_representable_upper_lower_fcvar} will be demonstrated in the numerical study section. It is important to note, however, that the MICP-R result in Theorem~\ref{poly_mixed_representable} does not hold when the ambiguity set with finite support is not polyhedral. 

\begin{restatable}{proposition}{notpolymixedrepresentable}\label{not_poly_mixed_representable} 
Suppose that the ambiguity set is  $\P=\{\bm p \colon \| \bm p- \bm p^0\|_2\leq \theta,\sum_{i\in[N]}p_i=1, \bm p\geq \bm 0 \}$,
where $\bm p^0 =\bm e/N$ denotes the nominal probability. When $0<\theta\leq \sqrt{1/(N(N-1))}$, DFO \eqref{dfo} may not be MICP-R. 
\end{restatable}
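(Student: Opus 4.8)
The plan is to reduce the inner infimum to a closed-form ``mean-minus-deviation'' expression and then exhibit a concrete instance whose objective is exactly a reverse $L_2$-norm, to which Lemma~\ref{micp_example} applies. Since the support $\U=\{\rxi^i\}_{i\in[N]}$ is finite, the DFO reads $v^*=\min_{\bm x\in\X}\min_{\bm p\in\P}\sum_{i\in[N]}p_iQ(\bm x,\rxi^i)$, so for fixed $\bm x$ the inner problem is a linear program in $\bm p$ over the intersection of the Euclidean ball $\{\bm p:\|\bm p-\bm p^0\|_2\le\theta\}$ with the probability simplex. First I would show that in this $\theta$-regime the nonnegativity constraints $\bm p\ge\bm 0$ are redundant: the closest point of a facet $\{p_i=0\}\cap\{\bm e^\top\bm p=1\}$ to the centroid $\bm p^0=\bm e/N$ is, by symmetry, the point with $p_i=0$ and $p_j=1/(N-1)$ for $j\neq i$, whose distance to $\bm p^0$ equals $\sqrt{1/(N(N-1))}$. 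Hence whenever $\theta\le\sqrt{1/(N(N-1))}$ the ball (within the affine hull $\bm e^\top\bm p=1$) is contained in the simplex, and the optimizer of the inner LP never leaves $\Re_+^N$.

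Next I would solve the reduced LP in closed form. Writing $\bm p=\bm p^0+\bm\delta$ with $\bm e^\top\bm\delta=0$ and $\|\bm\delta\|_2\le\theta$, and letting $\bm q(\bm x)=(Q(\bm x,\rxi^1),\dots,Q(\bm x,\rxi^N))^\top$ with mean $\bar q(\bm x)=\tfrac1N\bm e^\top\bm q(\bm x)$, the objective becomes $\bar q(\bm x)+\bm\delta^\top\bm q(\bm x)$. Since $\bm\delta\perp\bm e$ one may replace $\bm q(\bm x)$ by its centered version $\bm q(\bm x)-\bar q(\bm x)\bm e$, and minimizing a linear functional over a Euclidean ball gives
\begin{align*}
\min_{\bm p\in\P}\sum_{i\in[N]}p_iQ(\bm x,\rxi^i)=\bar q(\bm x)-\theta\bigl\|\bm q(\bm x)-\bar q(\bm x)\bm e\bigr\|_2.
\end{align*}
Thus the DFO objective with domain $\X$ equals $g(\bm x)=\bar q(\bm x)-\theta\|\bm q(\bm x)-\bar q(\bm x)\bm e\|_2+\chi_{\X}(\bm x)$, a mean penalized by a scaled Euclidean deviation term that carries the troublesome reverse-norm structure.

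Finally I would specialize to a counterexample. Taking $N\ge 3$, distinct support points $\rxi^i$, the affine (hence MICP-R) recourse $Q(\bm x,\rxi^i)=x_i$, and the compact set $\X=\{\bm x\in\Re^N:\bm e^\top\bm x=0,\ \|\bm x\|_2\le1\}$ (which has nonempty relative interior in the hyperplane $\bm e^\top\bm x=0$, so Assumption~\ref{assum_0} holds), one gets $\bar q(\bm x)=0$ and $g(\bm x)=-\theta\|\bm x\|_2+\chi_{\X}(\bm x)$. Up to the positive scalar $\theta$, this is precisely the reverse norm function of Lemma~\ref{micp_example} with $p=2\in(1,\infty)$, whose epigraph is not MICP-R; equivalently, one may invoke Lemma~\ref{lemma2_lubin} directly by taking an infinite sequence $\{\hat{\bm x}^j\}_j$ on a sphere $\{\bm x\in\X:\|\bm x\|_2=r\}$ for some $0<r<1$, whose midpoints fail the defining inequality by strict convexity of $\|\cdot\|_2$. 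Hence the DFO objective is not MICP-R even though $Q$ and $\X$ are, which proves the claim. I expect the main obstacle to be the geometric threshold step: correctly identifying the nearest-facet distance $\sqrt{1/(N(N-1))}$ and verifying that it is exactly the radius below which $\bm p\ge\bm 0$ becomes inactive, since this is what both pins down the stated bound on $\theta$ and guarantees the clean closed form. A secondary point is that the level set producing the Lemma~\ref{lemma2_lubin} sequence is a sphere of dimension $N-2$, so infinitely many points exist only for $N\ge 3$, consistent with the fact that for $N=2$ the deviation term is one-dimensional and the problem remains MICP-R.
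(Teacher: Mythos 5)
Your proof is correct and takes essentially the same route as the paper's: restrict to $Q(\bm x,\rxi^i)=x_i$, use the bound $\theta\le\sqrt{1/(N(N-1))}$ to deactivate $\bm p\ge\bm 0$ and reduce the inner problem to the closed form $\bar q(\bm x)-\theta\|\bm q(\bm x)-\bar q(\bm x)\bm e\|_2$, then conclude via the $p\in(1,\infty)$ case of Lemma~\ref{micp_example} (equivalently, Lemma~\ref{lemma2_lubin} applied to a sphere of midpoint-violating points). The only cosmetic differences are that you certify redundancy of nonnegativity by the ball-in-simplex facet-distance computation where the paper instead verifies feasibility of the Cauchy--Schwarz optimizer $\hat{p}_i^*=-\theta y_i/\|\bm y\|_2$ directly (the same threshold calculation), and you fix a concrete $\X$, usefully making explicit the $N\ge 3$ requirement that the paper leaves implicit.
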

\begin{subequations}
\begin{proof}
Let us consider a simple function $Q(\bm x,\rxi^i)=x_i$ for each $i\in[N]$ and $\bm c=\bm 0$. Then, DFO \eqref{dfo} is equivalent to 
\begin{align}
v^*=\min_{\bm x\in \X}\min_{\bm p\geq \bm 0}\left[ \sum_{i\in[N]}p_ix_i\colon  \sum_{i\in[N]}p_i=1,  \left\| \bm p-\frac{1}{N}\bm e\right\|_2\leq \theta\right]. \label{proof_finite_support_not_micpr_a}
\end{align}
Let us focus on simplifying the inner minimization of DFO \eqref{proof_finite_support_not_micpr_a} and define $\bm y =\bm x-(\bm x ^\top \bm e)\bm e/N $. Then, by the definition, we must have $\sum_{i\in[N]}y_i=0$ and
\begin{align*}
\sum_{i\in[N]}p_iy_i  =\sum_{i\in[N]}p_ix_i - \frac{1}{N}\sum_{i\in[N]}(x_i-1).
\end{align*}
DFO \eqref{proof_finite_support_not_micpr_a} is equivalent to
\begin{align}
v^*=\min_{\begin{subarray}{c}
\bm x\in \X,\\
\bm y =\bm x-(\bm x ^\top \bm e)\bm e/N
\end{subarray}}\frac{1}{N}\sum_{i\in[N]}(x_i-1)+\min_{\bm p\geq \bm 0}\left\{ \sum_{i\in[N]}p_iy_i \colon \sum_{i\in[N]}p_i=1,  \left\| \bm p-\frac{1}{N}\bm e\right\|_2\leq \theta \right\}.\label{proof_finite_support_not_micpr_b}
\end{align}
Letting $\hat {\bm p} = \bm p-\bm e/N$, DFO \eqref{proof_finite_support_not_micpr_b} is simplified as
\begin{align}
v^*=\min_{\bm y}\min_{\hat {\bm p} \geq -\bm e/N }\left\{ \sum_{i\in[N]}\hat{p}_iy_i \colon \sum_{i\in[N]}\hat{p}_i=0,  \left\| \hat{\bm p}\right\|_2\leq \theta, \sum_{i\in[N]}y_i=0 \right\}. \label{proof_finite_support_not_micpr_b_1}
\end{align}
According to the H\"older's inequality, the inner minimization of DFO \eqref{proof_finite_support_not_micpr_b_1} can be lower bounded by
\begin{align*}
\min_{\hat {\bm p} \geq -\bm e/N} \left\{\sum_{i\in[N]}\hat{p}_iy_i \colon \sum_{i\in[N]}\hat{p}_i=0,  \left\| \hat{\bm p}\right\|_2\leq \theta\right\} \geq  -\theta \|\bm y\|_2.
\end{align*}
In fact, the above equality can be achieved by the solution $\hat{p}_i^*=-\theta y_i/\|\bm y\|_2 $ for all $i\in [N]$. Since $\sum_{i\in [N]}\hat{p}_i^*=0$ and $\|\hat{\bm p}^*\|_2=\theta$, it suffices to show that 
\begin{align}
-|\hat{p}_i^*|=-\theta|y_i|/\|\bm y\|_2 \geq -\frac{1}{N},\label{proof_finite_support_not_micpr_c}
\end{align}
for all $\sum_{i\in [N]}\hat{p}_i^*=0$. That is, we need to show that
\begin{align}
\max_{\sum_{i\in[N]}y_i=0} \frac{|y_i|}{\|\bm y\|_2} \leq \frac{1}{N\theta}. \label{proof_finite_support_not_micpr_d}
\end{align}
Without loss of generality, suppose that $y_{\ell}\neq 0$.
Letting $y'_{i}=y_i/y_{\ell}$ for each $i\in[N]\setminus \{\ell\}$, the condition $\sum_{i\in[N]}y_i=0$ is equivalent to  $\sum_{i\in[N]}y'_{i}=-1$. Thus,
\begin{align*}
\max_{\sum_{i\in[N]}y_i=0} \frac{|y_i|}{\|\bm y\|_2} =\min_{\bm y'}\left\{\sqrt{1+\sum_{i\in[N]\setminus\ell} {y'_i}^2}\colon \sum_{i\in[N]}{y'_i}=-1\right\} = \sqrt{1+\left(\frac{1}{\sqrt{N-1}}\right)^2} = \sqrt{\frac{N}{N-1}}.
\end{align*}
Hence, the inequality in \eqref{proof_finite_support_not_micpr_d} must be satisfied since 
\begin{align*}
\max_{\sum_{i\in[N]}y'_{i}=-1}\, \frac{1}{\sqrt{1+\sum_{i\in[N]\setminus\ell} {y'_i}^2}} = \sqrt{\frac{N-1}{N}} \leq \frac{1}{N\theta}
\end{align*}
and $0<\theta\leq \sqrt{1/(N(N-1))}$.
Therefore, plugging in $\bm y =\bm x-(\bm x ^\top \bm e)\bm e/N$, DFO \eqref{proof_finite_support_not_micpr_b_1} is equivalent to 
\[v^*=\min_{\begin{subarray}{c}
\bm x\in \X
\end{subarray}}\frac{1}{N}\sum_{i\in[N]}(x_i-1)-\frac{\theta }{N}\left\|N\bm x-(\bm x ^\top \bm e)\bm e\right\|_2.\]
Using the fact that set $\X$ has a nonempty relative interior and following the similar proof as that of Lemma~\ref{micp_example}, we conclude that this DFO cannot be MICP-R. 
\QEDA
\end{proof}
\end{subequations}

The findings in Proposition~\ref{not_poly_mixed_representable} reveal that even when DFO \eqref{dfo} is based on a finite-support ambiguity set, it may not always be MICP-R. This highlights the necessity of carefully selecting an appropriate ambiguity set for DFO problems, especially in the context of data-driven decision-making. Additionally, it is important to note that the insights gained in this subsection are not limited to static uncertainty; they can be extended to encompass decision-dependent uncertainty. This includes applications in two-stage stochastic programs with decision-dependent uncertainty, as explored in recent works such as \cite{zhang2020unified, zeng2022two, vayanos2020robust, luo2020distributionally}.

\section{Numerical Study}
\label{sec_numerical}
To demonstrate the value of the MICP-R formulations, we consider the interval polyhedral ambiguity set $\P_{I}$ and apply the result in Corollary~\ref{mixed_representable_upper_lower_fcvar}. All instances in this section are coded in Python 3.9 with calls to solver Gurobi (version 9.5.2 with default settings) on a personal PC with an Apple M1 Pro processor and 16GB of memory. We set the time limit of each instance to be $3600$s. 

In particular, we consider a two-stage resource allocation (TRA) problem (also studied by \cite{duque2022distributionally}), which consists of a set of facilities, denoted by $s\in[n]$, that can be used to meet the demand from the customer sites, denoted by $j\in[n_1]$. In the TRA problem, the first-stage decision is to distribute a single type of commodity across these facilities. Once the allocation for each facility is determined, we then meet the demand in the second stage at the lowest possible cost. When the supply is insufficient, a large unit penalty (i.e., outsourcing) cost $\rho$ will be incurred for unsatisfied demand. Conversely, surplus supply at any facility has to bear a unit holding cost, $h$. The TRA problem can be formulated as
\begin{subequations}\label{eq_tcfl}
\begin{align}
\min _{\bm x\geq \bm 0} \E_{\Pr}\left[Q(\bm{x},{\trxi})\right],\label{second_stage_trp_obj}
\end{align}
where for a realization ${\rxi}$,
\begin{align}
Q(\bm x, {\rxi})=\min _{\bm y\geq \bm 0,\bm u\geq \bm 0, \bm v\geq \bm 0}  \left\{ \sum_{s\in[n]} \sum_{j\in[n_1]} q_{sj}y_{sj}+\rho \sum_{j\in[n_1]} u_j + h \sum_{s\in[n]} v_s \colon
\begin{array}{l}
\displaystyle
\sum_{j\in[n_1]} y_{sj} +v_s=x_s, \forall s\in[n], \\
\displaystyle \sum_{s\in[n]} y_{sj}+u_j \geq  \xi_j,  \forall j\in[n_1] 
\end{array}\right\}.\label{second_stage_trp_const}
\end{align}
\end{subequations}
In the TRA \eqref{eq_tcfl}, for each $s\in [n]$, the variable $x_s$ denotes the supply allocated to facility $s$. For $s\in[n]$ and $j\in [n_1] $, variable $y_{sj}$ represents the amount of the demand from the customer site $j$ satisfied by facility $s$, with the associated transportation cost denoted by $q_{sj}$. The parameters $\trxi$ are random, where $\tilde\xi_j$ denotes the random demand of customer site $j\in[n_1]$.

In the numerical experiments, we solve TRA \eqref{eq_tcfl} under finite support by generating random instances with varying sample sizes $N$. All the random variables (i.e., the customer demands $\trxi$) are truncated to be nonnegative. For each instance, we assume that the transportation cost vector $\bm{q}$ components are i.i.d.~truncated Gaussian with mean $1$ and variance $0.2$. The components of the customer demand $\trxi$ are i.i.d.~truncated Gaussian random variables with means $\bar{d}/n_1$ and variances, $0.005\times\bar{d}$ with $\bar{d}=1000$.
We also assume some outliers exist in the customer demand information, denoted by $\tilde{\bm{\xi}}^o$.  
We assume the components of random vector $\tilde{\bm{\xi}}^o$ are i.i.d.~truncated Gaussian with mean $\bar{d}/n_1$ and variance $0.01\times\bar{d}$.
The observed demand vector follows the following distribution {$0.95\tilde{\bm{\xi}}+0.05\tilde{\bm{\xi}}^o$. We set the number of potential facilities $n=8$, the number of customers $n_1=20$, the unit penalty cost $\rho=10$, and the unit cost for holding inventory $h=1$.

\noindent\textbf{Experiment 1. Value of MICP-R from Corollary~\ref{mixed_representable_upper_lower_fcvar}.} In the numerical implementation, we use DFO \eqref{dfo} to reduce the effect of outliers in the original SAA problem \eqref{second_stage_trp_obj}. Particularly, we consider the formulation in Corollary~\ref{mixed_representable_upper_lower_fcvar}, that is,
\begin{align}
\min _{\bm x\geq \bm 0} \inf_{\Pr\in\P_{I}}\E_{\Pr}\left[Q(\bm x, \trxi)\right],\label{second_stage_trp_obj_dfo}
\end{align}
We set $\mu=1/(N-N\varepsilon)-1/N$ with $\varepsilon\in(0,1)$ and $N\varepsilon$ being a rational number but not an integer in the interval polyhedral ambiguity set $\P_{I}$. According to Corollary~\ref{mixed_representable_upper_lower_fcvar}, the proposed DFO \eqref{dfo} is still MICP-R. Alternatively, the two-stage program \eqref{second_stage_trp_obj} with the interval polyhedral ambiguity set $\P_{I}$ admits a naive bilinear formulation, which can be solved directly by Gurobi. Since we cannot solve the bilinear model to optimality within the time limit, we use GAP to denote its optimality gap as
$
\textrm{GAP} (\%)= (| \textrm{UB} -\textrm{LB} | )/|\textrm{LB}|\times 100,
$ 
where $``\textrm{UB}" $ and $``\textrm{LB}" $ denotes the best upper bound and the best lower bound found by Gurobi.  We repeat the solution process $5$ times and display the average performance result in Table~\ref{tab_comparision_micpr}. We find that the MICP-R formulation can improve the running time significantly, even for small-scale instances, which shows the effectiveness of exploring the MICP-R formulation.

\begin{table}[htbp]
\centering
\caption{Comparisons Between DFO \eqref{dfo} in Corollary~\ref{mixed_representable_upper_lower_fcvar} and Its Bilinear Counterpart.}
\setlength{\tabcolsep}{1pt} 
\renewcommand{\arraystretch}{1} 
\label{tab_comparision_micpr}
\small
\begin{center}
\begin{tabular}{|c|r|r|r|r|r|r|}
\hline
\multirow{3}{*}{$N$} & \multicolumn{3}{c|}{$\varepsilon=0.16$}  & \multicolumn{3}{c|}{$\varepsilon=0.18$} \\ \cline{2-7}
& \multicolumn{2}{c|}{Bilinear} & \multirow{2}{*}{\makecell{DFO \eqref{dfo} in Corollary~\ref{mixed_representable_upper_lower_fcvar} \\Time (s)}} &  \multicolumn{2}{c|}{Bilinear} & \multirow{2}{*}{\makecell{DFO \eqref{dfo} in Corollary~\ref{mixed_representable_upper_lower_fcvar} \\Time (s)} } \\ \cline{2-7}
&  Time (s) &   GAP(\%) &   &  Time (s) &   GAP(\%) &  \\ \hline
45  &  3600 &   3.04& 32.73 &  3600 &  4.39  & 46.94 \\ \cline{1-7} 
55  &3600   &   5.35& 45.92 &  3600 &  6.24  &  57.93\\ \cline{1-7} 
65&   3600&   7.49& 58.38 &  3600 &  8.93 & 72.96  \\ \hline
\end{tabular}
\end{center}
\vspace{-2em}
\end{table}

\noindent\textbf{Experiment 2. Value of MICP-R using Hurwicz Model.} Based on Corollary~\ref{mixed_representable_upper_lower_fcvar}, we can also provide MICP-R formulations under the Hurwicz criterion for the TRA problem \eqref{eq_tcfl}. That is, we consider the following problem:
\begin{align}
\min _{\bm x\geq \bm 0} \left\{\bar{\lambda}\inf_{\Pr\in\P_{I}}\E_{\Pr}\left[Q(\bm x, \trxi)\right]+(1-\bar{\lambda})\sup_{\Pr\in\P_{I}}\E_{\Pr}\left[Q(\bm x, \trxi)\right]\right\},\label{second_stage_trp_obj_Hurwicz}
\end{align}
where $\bar{\lambda}\in[0,1]$ is a known level of optimism. Notice that when $\bar{\lambda}=0$ in Hurwicz \eqref{second_stage_trp_obj_Hurwicz}, we obtain the DRO formulation for TRA problem \eqref{eq_tcfl} as below
\begin{align}
\min _{\bm x\geq \bm 0} \sup_{\Pr\in\P_{I}}\E_{\Pr}\left[Q(\bm x, \trxi)\right].\label{second_stage_trp_obj_dro}
\end{align}
In this experiment, we compare the solutions from DFO \eqref{second_stage_trp_obj_dfo}, DRO \eqref{second_stage_trp_obj_dro}, and Hurwicz \eqref{second_stage_trp_obj_Hurwicz} via out-of-sample performances. Specifically, after solving the corresponding DFO \eqref{second_stage_trp_obj_dfo}, DRO \eqref{second_stage_trp_obj_dro}, and Hurwicz \eqref{second_stage_trp_obj_Hurwicz}, we generate additional $100$ random testing instances to evaluate the solution performances, i.e., to assess the performance of the first-stage decision of each model. Training and test instances are generated in the same manner, i.e., the components of the customer demand $\tilde{\bm{\xi}}$ are i.i.d.~truncated Gaussian with mean $\bar{d}/n_1$ and variance $0.005\times\bar{d}$, $\bar{d}=1000$. We record all the $50\%, 60\%, 70\%, 80\%, 90\%$ quantiles of the second-stage values, respectively. We then report each quantile's $95\%$ asymptotic confidence interval (C.I.) among these $100$ testing instances. We set $\bar{\lambda}=\{0.2,0.4,0.6,0.8\}$ in Hurwicz \eqref{second_stage_trp_obj_Hurwicz} and consider the training sample size $N=45$ with $\epsilon=0.16$. The results are shown in Table~\ref{table_model_comparison_hurwicz_stationary} and Figure~\ref{Fig_Hurwicz_stationary_figure}. 
In this case, DFO \eqref{second_stage_trp_obj_dfo} is consistently better than other methods when comparing $50\%, 60\%, 70\%$, and $80\%$ quantiles of the second-stage values. Hurwicz \eqref{second_stage_trp_obj_Hurwicz} with $\bar{\lambda}=0.8$ performs better than other methods when comparing $90\%$ quantile of the second-stage values. By carefully choosing an optimism level $\bar{\lambda}$, it is seen that the Hurwicz \eqref{second_stage_trp_obj_Hurwicz} can reduce the conservatism (in this case, we can choose $\bar{\lambda}=0.8$). 

\begin{table}[htbp]
\vspace{-1.1em}
\centering
\caption{Quantile Comparisons among DFO, DRO, and Hurwicz Models in Experiment 2.}
\renewcommand{\arraystretch}{1} 
\label{table_model_comparison_hurwicz_stationary}
\scriptsize
\begin{center}
\begin{tabular}{|c|rrrrr|}
\hline
\multirow{2}{*}{Model} & \multicolumn{5}{c|}{Quantile}                             \\ \cline{2-6} 
& \multicolumn{1}{c|}{50\%}                   & \multicolumn{1}{c|}{60\%}                   & \multicolumn{1}{c|}{70\%}                   & \multicolumn{1}{c|}{80\%}                   & \multicolumn{1}{c|}{90\%} \\ \hline
DRO     \eqref{second_stage_trp_obj_dro}     & \multicolumn{1}{r|}{{[}1826.63, 1826.95{]}} & \multicolumn{1}{r|}{{[}1828.39, 1828.73{]}} & \multicolumn{1}{r|}{{[}1830.33, 1830.66{]}} & \multicolumn{1}{r|}{{[}1832.75, 1833.14{]}} & {[}1835.82,1836.23{]}     \\ \hline
Hurwicz \eqref{second_stage_trp_obj_Hurwicz} with $\bar{\lambda}=0.2$ & \multicolumn{1}{r|}{{[}1771.41, 1771.74{]}} & \multicolumn{1}{r|}{{[}1773.23, 1773.58{]}} & \multicolumn{1}{r|}{{[}1775.22, 1775.56{]}} & \multicolumn{1}{r|}{{[}1777.67, 1778.07{]}} & {[}1780.75,1781.16{]}     \\ \hline
Hurwicz \eqref{second_stage_trp_obj_Hurwicz} with $\bar{\lambda}=0.4$ & \multicolumn{1}{r|}{{[}1771.21, 1771.55{]}} & \multicolumn{1}{r|}{{[}1773.07, 1773.42{]}} & \multicolumn{1}{r|}{{[}1775.07, 1775.40{]}} & \multicolumn{1}{r|}{{[}1777.53, 1777.93{]}} & {[}1780.63,1781.03{]}     \\ \hline
Hurwicz \eqref{second_stage_trp_obj_Hurwicz} with $\bar{\lambda}=0.6$ & \multicolumn{1}{r|}{{[}1766.25, 1766.60{]}} & \multicolumn{1}{r|}{{[}1768.11, 1768.47{]}} & \multicolumn{1}{r|}{{[}1770.13, 1770.47{]}} & \multicolumn{1}{r|}{{[}1772.61, 1773.00{]}} & {[}1775.70,1776.11{]}     \\ \hline
Hurwicz \eqref{second_stage_trp_obj_Hurwicz} with $\bar{\lambda}=0.8$ & \multicolumn{1}{r|}{{[}1763.06, 1763.41{]}} & \multicolumn{1}{r|}{{[}1764.94, 1765.31{]}} & \multicolumn{1}{r|}{{[}1767.02, 1767.37{]}} & \multicolumn{1}{r|}{{[}1769.49, 1769.90{]}} & {[}1772.86,1773.43{]}     \\ \hline
DFO   \eqref{second_stage_trp_obj_dfo}                     & \multicolumn{1}{r|}{{[}1757.02, 1757.37{]}} & \multicolumn{1}{r|}{{[}1758.95, 1759.34{]}} & \multicolumn{1}{r|}{{[}1761.22, 1761.67{]}} & \multicolumn{1}{r|}{{[}1768.88, 1769.01{]}} & {[}1774.94,1776.10{]}     \\ \hline
\end{tabular}
\end{center}
\end{table}

\begin{figure}[htbp]
\vspace{-1.1em}
\begin{center}
\centering
\includegraphics[width=0.8\textwidth]{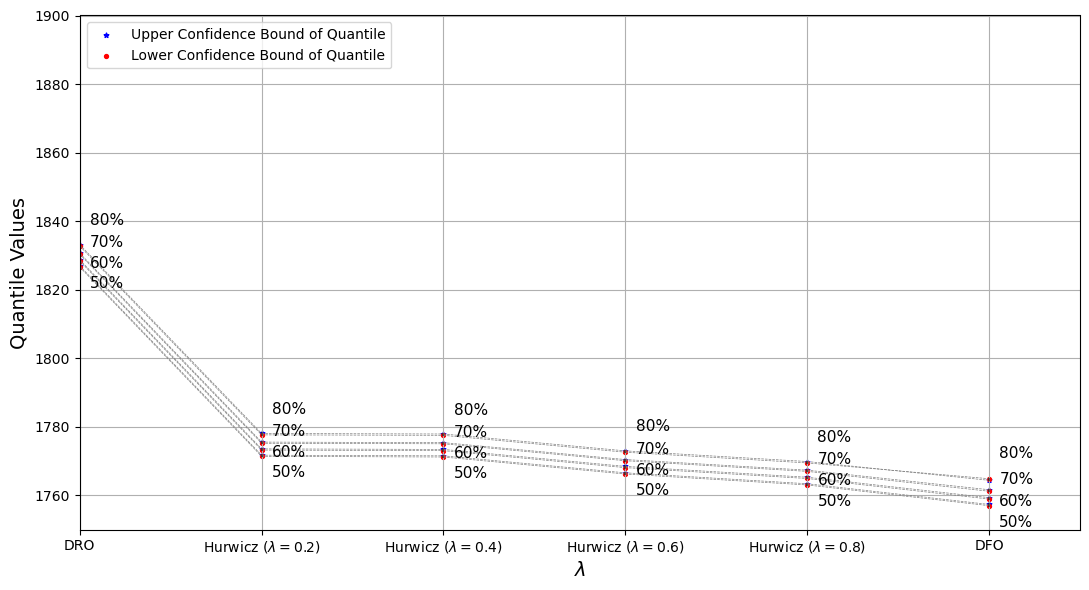}
\vspace{-1.1em}
\caption{Illustration of Quantile Comparisons in  Experiment 2}
\label{Fig_Hurwicz_stationary_figure}
\end{center}
\vspace{-2em}
\end{figure}


\noindent{\textbf{Experiment 3. Model Comparisons When the Testing Distribution is Different From the Training One.}}  We follow the same procedure described in Experiment 2, i.e., we record all the $50\%$, $60\%$, $70\%$, $80\%, 90\%$ quantiles in the second-stage scenarios for each method (e.g., DFO, DRO, and Hurwicz models) in each testing instance, respectively, and report the average of each quantile among these $100$ randomly generated testing instances. The testing and training setting are the same as that of Experiment 2, except that we assume that the components of the customer demand $\tilde{\bm{\xi}}$ are i.i.d.~truncated Gaussian with mean $1000/n_1$ and variances $0.01\times 1000$. We repeat the procedure in Experiment 2 to solve Hurwicz \eqref{second_stage_trp_obj_Hurwicz} with $\bar{\lambda}=\{0,0.2,0.4,0.6,0.8\}$ to
better capture the influence of the optimism level. The results are shown in Table~\ref{table_model_comparison_hurwicz_nonstationary} and Figure~\ref{Fig_Hurwicz_nonstationary_figure}.
As anticipated, Hurwicz \eqref{second_stage_trp_obj_Hurwicz}  can alleviate conservatism and improve out-of-sample performance when the testing distribution has perturbations by selecting the level of optimism $\bar{\lambda}$ (in this case, we can choose $\bar{\lambda}=0.2$ or $0.4$).

\begin{table}[htbp]
\vspace{-1.1em}
\centering
\caption{Quantile Comparisons among DFO, DRO, and Hurwicz Models in Experiment 3.}
\renewcommand{\arraystretch}{1} 
\label{table_model_comparison_hurwicz_nonstationary}
 \scriptsize
\begin{center}
\begin{tabular}{|c|rrrrr|}
\hline
\multirow{2}{*}{Model} & \multicolumn{5}{c|}{Quantile}                                              \\ \cline{2-6} 
& \multicolumn{1}{c|}{50\%}                    & \multicolumn{1}{c|}{60\%}                   & \multicolumn{1}{c|}{70\%}                   & \multicolumn{1}{c|}{80\%}                   & \multicolumn{1}{c|}{90\%} \\ \hline
DRO  \eqref{second_stage_trp_obj_dro}                  & \multicolumn{1}{r|}{{[}1827.91,   1828.75{]}} & \multicolumn{1}{r|}{{[}1831.56, 1832.45{]}} & \multicolumn{1}{r|}{{[}1835.51, 1836.45{]}}  & \multicolumn{1}{r|}{{[}1840.37, 1841.31{]}} & {[}1846.95,1847.93{]}     \\ \hline
Hurwicz \eqref{second_stage_trp_obj_Hurwicz} with $\bar{\lambda}=0.2$ & \multicolumn{1}{r|}{{[}1772.65, 1773.51{]}}  & \multicolumn{1}{r|}{{[}1776.35, 1777.27{]}} & \multicolumn{1}{r|}{{[}1780.36, 1781.34{]}} & \multicolumn{1}{r|}{{[}1785.69, 1786.85{]}} & {[}1815.39,1833.73{]}     \\ \hline
Hurwicz \eqref{second_stage_trp_obj_Hurwicz} with $\bar{\lambda}=0.4$ & \multicolumn{1}{r|}{{[}1772.59, 1773.47{]}}  & \multicolumn{1}{r|}{{[}1776.38, 1777.29{]}} & \multicolumn{1}{r|}{{[}1780.49, 1781.47{]}} & \multicolumn{1}{r|}{{[}1785.76, 1786.92{]}} & {[}1815.58,1833.99{]}     \\ \hline
Hurwicz \eqref{second_stage_trp_obj_Hurwicz} with $\bar{\lambda}=0.6$ & \multicolumn{1}{r|}{{[}1767.90, 1768.81{]}}    & \multicolumn{1}{r|}{{[}1771.77, 1772.71{]}} & \multicolumn{1}{r|}{{[}1776.25, 1777.55{]}} & \multicolumn{1}{r|}{{[}1785.61, 1792.29{]}} & {[}1900.45,1927.38{]}     \\ \hline
Hurwicz \eqref{second_stage_trp_obj_Hurwicz} with $\bar{\lambda}=0.8$ & \multicolumn{1}{r|}{{[}1764.88, 1765.81{]}}  & \multicolumn{1}{r|}{{[}1768.92, 1769.95{]}} & \multicolumn{1}{r|}{{[}1773.96, 1778.12{]}} & \multicolumn{1}{r|}{{[}1812.27, 1828.57{]}} & {[}1972.28,2000.03{]}     \\ \hline
DFO   \eqref{second_stage_trp_obj_dfo}                    & \multicolumn{1}{r|}{{[}1759.27, 1760.42{]}}  & \multicolumn{1}{r|}{{[}1765.92, 1770.52{]}} & \multicolumn{1}{r|}{{[}1804.41, 1824.36{]}} & \multicolumn{1}{r|}{{[}1931.37, 1956.21{]}} & {[}2111.82,2139.61{]}     \\ \hline
\end{tabular}
\end{center}
\end{table}

\begin{figure}[htbp]
\begin{center}
\centering
\includegraphics[width=0.8\textwidth]{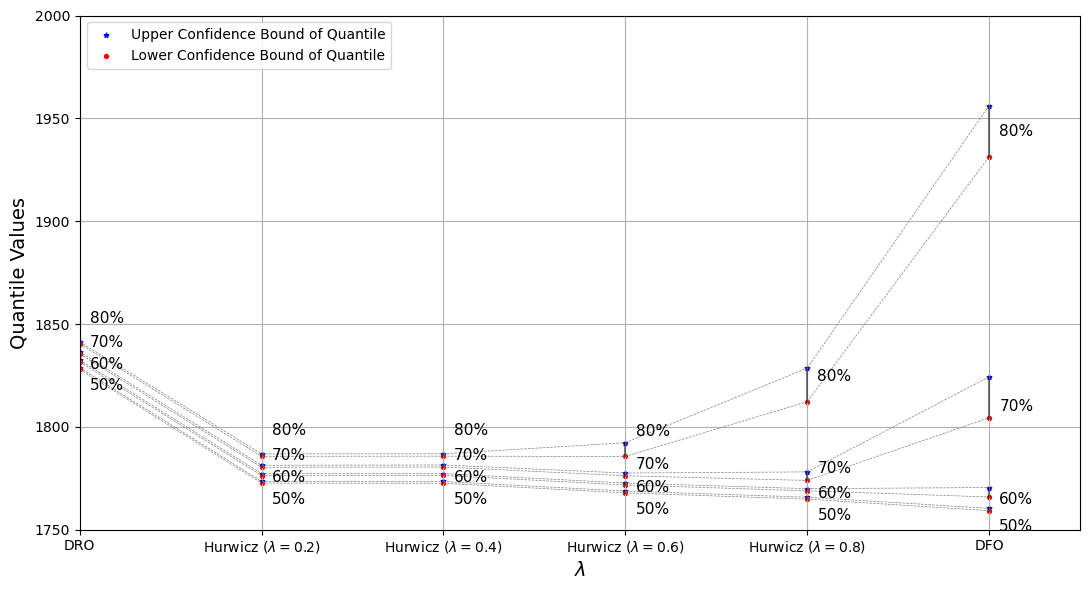}
\vspace{-1.1em}
\caption{Illustration of Quantile Comparisons in Experiment 3}
\label{Fig_Hurwicz_nonstationary_figure}
\end{center}
\vspace{-2em}
\end{figure}

\section{Conclusion} \label{sec_conclusion}
This paper provided sufficient and necessary conditions where DFO can be tractable or intractable. Even though DFO is NP-hard to solve in general, we demonstrated that many DFO problems can be mixed-integer convex programming representable, which can be solved by off-the-shelf solvers. We numerically demonstrated the effectiveness of using MICP-R formulations. One future direction is to extend the results to the two-stage stochastic programs under decision-dependent uncertainty. It is also interesting to investigate the theoretical advantages of the Hurwicz criterion.

\section*{Acknowledgments}
This research has been supported in part by the National Science Foundation grants 2246414 and 2246417 and the Office of Naval Research grant N00014-24-1-2066.

\bibliographystyle{plain}
\bibliography{dfo.bib}

\end{document}